\newcommand{\inv}[1]{{#1}^{-1}}
\newcommand{\dom}[1]{\operatorname{dom}(#1)}
\newcommand{\im}[1]{\operatorname{im}(#1)}
\newcommand{\Sym}[1]{\operatorname{Sym}(#1)}
\newcommand{\Stab}[1]{\ifmmode\operatorname{Stab}(#1)\else\text{Stab}$(#1)$\fi}
\newcommand{\AStab}[1]{{\text{AStab}}(#1)}
\newcommand{\PStab}[1]{\ifmmode\operatorname{PStab}(#1)\else\text{PStab}$(#1)$\fi}
\newcommand{\setStab}[2]{{#1}_{\lbrace #2\rbrace}}
\newcommand{\pointStab}[2]{{#1}_{(#2)}}
\newcommand{\Iso}[1]{\ifmmode\operatorname{Iso}(#1)\else\text{Iso}$(#1)$\fi}
\newcommand{\Endo}[1]{\ifmmode\operatorname{End}(#1)\else\text{End}$(#1)$\fi}
\newcommand{\Aut}[1]{\ifmmode\operatorname{Aut}(#1)\else\text{Aut}$(#1)$\fi}
\newcommand{\pAut}[1]{\ifmmode\operatorname{pAut}(#1)\else\text{pAut}$(#1)$\fi}
\newcommand{\ipEnd}[1]{\ifmmode\operatorname{ipEnd}(#1)\else\text{IpEnd}$(#1)$\fi}
\newcommand{\order}[1]{\ifmmode\mathcal{O}_{#1}\else$\mathcal{O}_{#1}$\fi}
\newcommand{\supp}[1]{\ifmmode\operatorname{supp}(#1)\else\text{supp}$(#1)$\fi}
\newcommand*{\zfc}{\ifmmode\mathbf{ZFC}\else\textbf{ZFC}\fi}
\newcommand{\fin}{\mathfrak{F}}
\newcommand{\partition}{\mathcal{P}}
\newcommand{\filter}{\mathcal{F}}
\newcommand{\card}[1]{|#1|}
\newcommand{\rank}[1]{\ifmmode\operatorname{rank}(#1)\else\text{rank}$(#1)$\fi}
\newcommand{\genset}[1]{\ifmmode\langle#1\rangle\else$\langle#1\rangle$\fi}
\newcommand{\eval}[1]{\ensuremath{#1 |}}
\newcommand{\mutt}[2]{\mathcal{C}(#1,  #2)}
\newcommand{\mte}{\mathcal{T}}
\newcommand{\trans}[1]{#1^{X^X}}
\renewcommand{\set}[2]{\{#1 : #2\}}
\newcommand*\from{\colon}
\theoremstyle{plain}
\newtheorem{theorem}{Theorem}[section]
\newtheorem{lemma}[theorem]{Lemma}
\newtheorem{proposition}[theorem]{Proposition}
\newtheorem{corollary}[theorem]{Corollary}
\theoremstyle{definition}
\numberwithin{equation}{section}
\newenvironment{nalign}{
	\begin{equation}
		\begin{aligned}
		}{
		\end{aligned}
	\end{equation}
	\ignorespacesafterend
}
\begin{document}

\title{Maximal Subsemigroups of Infinite Symmetric Inverse Monoids}
\author{M. Hampenberg, Y. P\' eresse}
\maketitle
\abstract{The symmetric inverse monoid $I_X$ on a set $X$ consists of all bijective functions whose domain and range are subsets of $X$ under the usual composition and inversion of partial functions. For an arbitrary infinite set $X$, we classify all maximal subsemigroups and maximal inverse subsemigroups of $I_X$ which contain the symmetric group $\Sym{X}$ or any of the following subgroups of $\Sym{X}$: the pointwise stabiliser of a finite subset of $X$, the stabiliser of an ultrafilter on $X$, or the stabiliser of a partition of $X$ into finitely many parts of equal cardinality.}

\section{Introduction}
A \emph{maximal subalgebra} of an algebra  $A$ (in the sense of Universal Algebra) is a maximal element of the set of all proper subalgebras of $A$ ordered by containment.  In other words, a proper subalgebra $M$ of $A$ is maximal if the subalgebra generated by $M$ and any element of $A \setminus M$ equals $A$. Given an algebra $A$ of interest, it is a natural problem to try to classify its maximal subalgebras and doing so would be an important step towards understanding the subalgebra structure of $A$ in general. 

In Group Theory, perhaps the most well-known such classification is the O'Nan-Scott Theorem, see \cite{scott1980representations, aschbacher1985maximal, liebeck1988nan}, which classifies the maximal subgroups of the symmetric group $\Sym{X}$ on a finite set $X$. 

Among these maximal subgroups are the setwise stabilisers
\begin{equation*}
    \setStab{\Sym{X}}{\Sigma}= \set{f \in \Sym{X} }{ \Sigma f = \Sigma}
\end{equation*}
of proper, non-empty subsets $\Sigma$ of $X$ with $|\Sigma|\neq |X\setminus \Sigma|$; and the  stabilisers $\Stab{\partition}$ of partitions $\partition=\{\Sigma_0, \dots, \Sigma_{n-1}\}$ of $X$ into $n=\{0, \dots, n-1\}$ blocks of equal cardinality, defined by
    \begin{equation*}
        \Stab{\partition} = \set{f \in \Sym{X} }{ (\forall i \in n) (\exists j \in n) (\Sigma_i f = \Sigma_j)}.
    \end{equation*}
Such stabilisers of sets and partitions are the only \emph{imprimitive} maximal subgroups of $\Sym{X}$, that is, the only maximal subgroups which preserve a partition of $X$. The O'Nan-Scott Theorem also classifies the various types of primitive maximal subgroups of $\Sym{X}$ when $X$ is finite, but these are less relevant for the following discussion. 

Much work has also been done on maximal subgroups of infinite symmetric groups. It seems unlikely that all maximal subgroups of $\Sym{X}$ for infinite $X$ can be classified in a meaningful way. However, several concrete classes of maximal subgroups of $\Sym{X}$ have been found and there are classifications of certain types of maximal subgroups. In particular, the imprimitive maximal subgroups described above have more or less direct analogues in the case of infinite $X$.
The setwise stabilser $\setStab{\Sym{X}}{\Sigma}$ is a maximal subgroup of $\Sym{X}$ for every non-empty finite subset $\Sigma$ of an infinite set $X$, see \cite{ball1966maximal}. The analogue of stabilisers of partitions is less direct. Recall that a \emph{moiety} of an infinite set $X$ is a subset $Y$ of $X$ such that $|Y|=|X\setminus Y|=|X|$. A \emph{finite partition} of $X$ is a partition $\partition=\{\Sigma_0, \dots, \Sigma_{n-1}\}$ of $X$ into finitely many moieties. As discussed in a note added in proof to \cite{subgroups_macpherson_neumann}, the stabiliser $\Stab{\partition}$ of a finite partition of $X$ is not a maximal subgroup of $\Sym{X}$ since it is strictly contained in the \emph{almost stabiliser} $\AStab{\partition}$ of $\partition$ defined as
    \begin{align*}
        \AStab{\partition} = \set{f \in \Sym{X}}{\;&(\forall i \in n) (\exists j \in n)\\
        &(|\Sigma_i f \setminus \Sigma_j| +|\Sigma_j \setminus \Sigma_i f| < |X|)}.
    \end{align*}
However, $\AStab{\partition}$ is a maximal subgroup of $\Sym{X}$. An infinite set $X$ has $|X|$ many finite subsets and $2^{|X|}$ many finite partitions and hence as many maximal subgroups of $\Sym{X}$ stabilising them, respectively. There is a lager class still of maximal subgroups of $\Sym{X}$ for infinite $X$, which does not have an analogue for finite $X$: it was shown in \cite{ richman1967maximal} that the stabiliser group of any ultrafilter defined on $X$ is maximal in $\Sym{X}$; see Section \ref{section:ultrafilters} for a definition of filters, ultrafilters, and their stabilisers. Distinct ultrafilters give rise to distinct stabiliser subgroups and, by Pospi\u sil's Theorem (see, for example,  \cite[Theorem 7.6]{Jech2003}), there are $2^{2^{|X|}}$ many ultrafilters on an infinite set $X$. Since there are also $2^{2^{|X|}}$ subsets of $\Sym{X}$, it follows that $\Sym{X}$ has precisely $2^{2^{|X|}}$ many maximal subgroups when $X$ is infinite. Note that the setwise stabiliser $\setStab{\Sym{X}}{\Sigma}$ of a subset $\Sigma$ of $X$ can also be seen as the stabiliser of the filter $\filter$ of all subsets of $X$ which contain $\Sigma$, but $\filter$ is only an ultrafilter when $|\Sigma|=1$. It was shown in \cite{brazil1994maximal} that every maximal subgroup of $\Sym{X}$ which contains the pointwise stabiliser 
$\pointStab{\Sym{X}}{\Sigma}=\set{f \in \Sym{X} }{ (\forall x \in \Sigma) (xf=x)}$ of a set $\Sigma\subseteq X$ with $|X \setminus \Sigma|=|X|$ is either the stabiliser of a finite partition of $X$ or the stabiliser of a certain kind of filter on $X$.
For more results on maximal subgroups of $\Sym{X}$ when $X$ is infinite, see for example \cite{ball1968indices, baumgartner1993maximal, biryukov2000set, covington1996some, macpherson1993large, maximal_macpherson_preager}.

The symmetric group has two widely-studied analogues in the classes of semigroups and inverse semigroups. Recall that a \emph{semigroup} is a set together with an associative binary operation, a \emph{monoid} is a semigroup which has an identity element, and an \emph{inverse semigroup} is a semigroup $S$ such that for every $s\in S$ there exists a unique $s^{-1}\in S$ (called the inverse of $s$) satisfying $ss^{-1}s=s$ and $s^{-1}ss^{-1}$. The \emph{full transformation semigroup} $X^X$ consists of all functions from a set $X$ to itself and the \emph{symmetric inverse monoid} $I_X$ consists of all bijections whose domain and co-domain are subsets of $X$; in both cases the semigroup multiplication is the usual composition of partial functions (see Section \ref{section:partial_functions} for details). Analogously to Cayley's Theorem, every semigroup is isomorphic to a subsemigroup of $X^X$ and every inverse semigroup is isomorphic to an inverse subsemigroup of $I_X$ for some $X$.

In the case that $X$ is finite,  it is not difficult to classify the maximal subsemigroups of $X^X$ and $I_X$ in terms of the maximal subgroups of $\Sym{X}$.  The argument is as follows. If $S$ is either $X^X$ or $I_X$,  then $S \setminus \Sym{X}$ is an ideal of $S$ and so the maximal subsemigroups of $S$ which do not contain $\Sym{X}$ are precisely subsemigroups of the form $(S \setminus \Sym{X}) \cup G$ where $G$ is a maximal subgroup of $\Sym{X}$.  Moreover,  if $f\in S$ has image size $|X|-1$, then the semigroup generated by $\Sym{X}$ and $f$ is $S$.  Hence the only maximal subsemigroup of $S$ which does contain $\Sym{X}$ is the union of $\Sym{X}$ and the set of elements of $S$ with image size at most $|X|-2$.  Note that in the case when $S=I_X$, all maximal subsemigroups of $I_X$ are also inverse subsemigroups and so the maximal subsemigroups and maximal inverse subsemigroups of $I_X$ coincide when $X$ is finite.  A classification of the maximal subsemigroups of $I_X$ and maximal subsemigroups of certain subsemigroups of $I_X$ when $X$ is finite was also given in \cite{xiuliang1999classification}.  Graham, Graham, and Rhodes showed that the maximal subsemigroups of every finite semigroup $S$ are, in some sense,  determined by the maximal subgroups of $S$, see \cite{graham1968maximal}.

We turn our attention to maximal subsemigroups of $X^X$ and $I_X$ for infinite $X$. We just saw that, in the case of finite $X$, every maximal subsemigroup $M$ of $X^X$ or $I_X$ corresponds to a large subgroup of $\Sym{X}$. More precisely, $M \cap \Sym{X}$ is either a maximal subgroup of $\Sym{X}$ or $\Sym{X}$ itself and, conversely,  each such subgroup corresponds to exactly one maximal subsemigroup. This motivates the following approach for infinite $X$: Consider a ``large'' (for example maximal) subgroup $G$ of $\Sym{X}$ and aim to classify all maximal subsemigroups of $X^X$ or $I_X$ which contain $G$. The maximal subsemigroups of $X^X$ containing $\Sym{X}$ where classified in \cite{gavrilov1965functional, pinsker2005maximal} and those containing the pointwise stabiliser of a finite set, the stabiliser of an ultrafilter, or the stabiliser of a finite partition where classified in  \cite{maximal_east_mitchell_peresse}.

In this paper we let $X$ be an infinte set and classify the maximal subsemigroups and maximal inverse subsemigroups of $I_X$ which contain $\Sym{X}$ (Theorem \ref{thm:maximal_sym}), the pointwise stabiliser of a finite subset of $X$ (Theorem \ref{thm:maximal_pointwise}),  the stabiliser of an ultrafilter on $X$ (Theorem \ref{thm:maximal_ultrafilter}),  or the stabiliser of a finite partition of $X$ (Theorem \ref{thm:maximal_astab}).
In order to provide clear and concise proofs,  we will establish a general method (Theorem \ref{lem:total->all}) for classifying  maximal subsemigroups and maximal inverse subsemigroups of $I_X$ containing any ``sufficiently large'' inverse subsemigroup of $I_X$ and set up a way of leveraging certain results about $X^X$ from \cite{maximal_east_mitchell_peresse} to prove analogous results about $I_X$ (Section \ref{fromIXtoXX}). Alternative direct proofs (not using results about $X^X$) of our results in the case when $X$ is countable may be found in the first author's PhD thesis \cite[Chapter 3]{hampenberg2025submonoids}.

\section{Preliminaries}

In this section we introduce some terms and notation and prove a number of lemmas that will be used in the proofs of the main results. 

\subsection{Partial functions, full transformations, and partial bijections}\label{section:partial_functions}

In this paper, we will prove results about the symmetric inverse monoid $I_X$ using results about the full transformation semigroup $X^X$.  Since $I_X$ and $X^X$  are both subsemigroups of the semigroup $P_X$ of all partial functions (defined just below),  it is convenient to state the following definitions and results for $P_X$.

If $X$ is a set, then the semigroup $P_X$ of \emph{partial functions} consists of all functions whose domain and range are subsets of $X$. In other words, an element $f$ of $P_X$ is a subset of the Cartesian product $X \times X$ such that for every $x \in X$ there exists at most one $y \in X$ such that $(x,y) \in f$. If $x,y \in X$, then we may also write $(x)f=y$ (or simply $xf=y$) to denote that $(x,y) \in f$.  The operation on the semigroup $P_X$ is composition of partial functions, a generalisation of the usual composition of functions defined as follows. If $f,g \in P_X$, then 
$$fg=\set{(x,y) }{ (\exists z \in X)((x,z) \in f \text { and } (z,y) \in g)}.$$
Note that we compose from left to right in this paper, which is why we write elements of $P_X$ to the right of their arguments. 

Let $f \in P_X$. The \emph{domain} $\dom{f}$ and \emph{image} $\im{f}$ of $f$ are
\begin{align*}
\dom{f}&=\set{x \in X }{ (x,y) \in f \text{ for some }y \in X}\\
\im{f}&=\set{ y \in X }{ (x,y) \in f \text{ for some } x \in X}.
\end{align*}
We say that $f$ is \emph{total} if $\dom{f}=X$ and $f$ is \emph{surjective} if $\im{f}=X$. As usual, $f$ is \emph{injective} if $xf=yf \implies x=y$ for all $x,y \in\dom{f}$.
The \emph{restriction} of $f$ to a subset $A$ of $X$ is 
$$f|_{A}=\set{(x,y) \in f}{x \in A}.$$
We may now define
\begin{align*}
X^X&=\set{f \in P_X }{ \dom{f}=X};\\
I_{X}&=\set{f \in P_X }{ f \text{ is injective}};\\
\Sym{X} &=\set{f \in I_X }{ \dom{f}=\im{f}=X}.
\end{align*}

\subsection{Rank, defect, and collapse}
The \emph{rank} of $f\in P_X$ is $r(f)=|\im{f}|$ and the \emph{defect} of $f$ is $d(f)=|X \setminus \im{f}|$. 
A \emph{transversal} of $f$ is a transversal in the usual combinatorial sense of the pre-images of $f$. That is,  $T \subseteq X$ is a transversal of $f$ if for every $y\in \im{f}$ there exists a unique $x\in T$ such that $xf=y$.
The \emph{collapse} of $f$ is $c(f)=|X \setminus T|$ where $T$ is any transversal of $f$.  

Note that $d(f)=0$ if and only if $f$ is surjective and $c(f)=0$ if and only if $f$ is total and injective for any $f \in P_X$.  If $f\in I_X$,  then $r(f)=|\im{f}|=|\dom{f}|$ and $c(f)=d(f^{-1})$.
The following lemma establishes some basic properties of how rank, collapse, and defect behave under composition of elements of $I_X$ and will be used frequently throughout the rest of the paper.

\begin{lemma} \label{lem:defect_properties}
    Let $X$ be a set, $\mu$ an infinite cardinal such that $\mu \leq \card{X}$, and $f,g \in I_X$. 
    Then the following hold:
    \begin{enumerate}[~\normalfont(i)]
            \item $r(fg) \leq \min(r(f),r(g))$;
        \label{lem:defect_properties/rank_composition}
    
        \item $c(f) \leq c(fg) \leq c(f)+c(g)$;
        \label{lem:defect_properties/collapse_composition}

        \item $d(g) \leq d(fg) \leq d(f)+d(g)$;
        \label{lem:defect_properties/defect_composition}
        
        \item if $f$ is \emph{surjective} (i.e. $d(f)=0$), then $c(fg) = c(f)+c(g)$;
        \label{lem:defect_properties/collapse_surjective}
        
        \item if $g$ is \emph{total} (i.e. $c(g)=0$), then $d(fg) = d(f)+d(g)$;
        \label{lem:defect_properties/defect_total}

        \item if $d(f) < \mu \leq c(g)$, then $c(fg) \geq \mu$.
        \label{lem:defect_properties/defect<collapse}

        \item if $c(g) < \mu \leq d(f)$, then $d(fg) \geq \mu$;
        \label{lem:defect_properties/collapse<defect}

    \end{enumerate}
\end{lemma}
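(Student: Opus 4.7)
The central idea is to rewrite each of rank, collapse, and defect as the cardinality of a set involving $\dom{f}$ and $\im{f}$: for $f\in I_X$, injectivity of $f$ means $\dom{f}$ is itself a transversal, so $c(f)=\card{X\setminus \dom{f}}$, $d(f)=\card{X\setminus \im{f}}$, and $r(f)=\card{\dom{f}}=\card{\im{f}}$. For $f,g\in I_X$, composing from left to right gives
\begin{equation*}
\dom{fg}=\set{x\in \dom{f}}{xf\in \dom{g}},\qquad \im{fg}=(\im{f}\cap \dom{g})g,
\end{equation*}
and the first set is mapped bijectively onto $\im{f}\cap \dom{g}$ by $f$. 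In particular, $r(fg)=\card{\im{f}\cap \dom{g}}$, and $X\setminus \dom{fg}$ is the disjoint union of $X\setminus \dom{f}$ and $\dom{f}\setminus \dom{fg}$, where the latter is in bijection with $\im{f}\setminus \dom{g}$.

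With these formulas in hand, parts (i)--(v) reduce to routine cardinal arithmetic. Part (i) is immediate from $\card{\im{f}\cap \dom{g}}\le \min(r(f),r(g))$. For (ii), the decomposition above together with the containment $\im{f}\setminus \dom{g}\subseteq X\setminus \dom{g}$ yields $c(fg)\le c(f)+c(g)$, while $\dom{fg}\subseteq \dom{f}$ gives $c(fg)\ge c(f)$. Part (iv) is the corresponding equality: when $d(f)=0$, i.e.\ $\im{f}=X$, the inclusion $\im{f}\setminus \dom{g}\subseteq X\setminus \dom{g}$ is an equality. Parts (iii) and (v) are the dual statements obtained by applying (ii) and (iv) to $g^{-1}f^{-1}=(fg)^{-1}$, using $c(h^{-1})=d(h)$ for $h\in I_X$.

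The main content lies in (vi), from which (vii) follows by the same inverse duality. The key inequality is
\begin{equation*}
c(g)=\card{X\setminus \dom{g}}\le \card{X\setminus \im{f}}+\card{\im{f}\setminus \dom{g}}=d(f)+\card{\im{f}\setminus \dom{g}},
\end{equation*}
obtained from $X\setminus \dom{g}\subseteq (X\setminus \im{f})\cup(\im{f}\setminus \dom{g})$. Under the hypothesis $d(f)<\mu\le c(g)$ with $\mu$ infinite, the assumption $\card{\im{f}\setminus \dom{g}}<\mu$ would force $c(g)<\mu+\mu=\mu$, a contradiction. Hence $\card{\im{f}\setminus \dom{g}}\ge \mu$, and via the bijection $\dom{f}\setminus \dom{fg}\to \im{f}\setminus \dom{g}$ this transfers to $c(fg)\ge \mu$. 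I do not anticipate any serious obstacle; the only subtlety is being careful with cardinal subtraction, which is sidestepped entirely by phrasing the decomposition additively as above.
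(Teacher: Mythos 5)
Your proof is correct and follows essentially the same route as the paper's: the decomposition of $X\setminus\dom{fg}$ into $X\setminus\dom{f}$ and the part of $\dom{f}$ mapped by $f$ outside $\dom{g}$ (the paper writes this second piece as $(X\setminus\dom{g})f^{-1}$, which is exactly your $\dom{f}\setminus\dom{fg}\cong\im{f}\setminus\dom{g}$), with (iii), (v), (vii) obtained by the inverse duality $d(h)=c(h^{-1})$. Your argument for (vi) via the containment $X\setminus\dom{g}\subseteq(X\setminus\im{f})\cup(\im{f}\setminus\dom{g})$ is the same counting the paper performs with $\dom{f^{-1}}=\im{f}$.
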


\begin{proof}
    \begin{enumerate}[\normalfont(i)]
    
            \item This follows from the fact that $\dom{fg} \subseteq \dom{f}$ and $\im{fg} \subseteq \im{g}$.
    
        \item Since $\dom{fg} \subseteq \dom{f}$, it follows that $c(f) \leq c(fg)$. For the second inequality, note that $x \in X \setminus \dom{fg}$ if and only if either $x\in X \setminus \dom{f}$ or $x\in \dom{f}$ and $(x)f \in X \setminus \dom{g}$. Thus $X \setminus \dom{fg}$ is the disjoint union of $X \setminus \dom{f}$ and $(X \setminus \dom{g})f^{-1}$.
In particular, 
        \begin{nalign}\label{equation:collapse}
c(fg) &= \card{X \setminus \dom{fg}}\\ 
&= \left|X \setminus \dom{f}\right| + 
        \left|(X \setminus \dom{g})f^{-1}\right|\\
        &\leq c(f) + c(g).
        \end{nalign}

        \item This follows from \eqref{lem:defect_properties/collapse_composition} since $d(h)=c(h^{-1})$ for every $h\in I_X$.

        \item If $f$ is surjective, then $f^{-1}$ is total and so the inequality equation in equation \eqref{equation:collapse} becomes an equality.

        \item This follows from \eqref{lem:defect_properties/collapse_surjective} and the identity $d(h)=c(h^{-1})$.

        \item If $d(f)<\mu\leq c(g)$, then $|X \setminus \dom{g}|=c(g)\geq \mu$ and $|X \setminus \dom{f^{-1}}|=c(f^{-1})=d(f)<\mu$. 
        Thus $|(X \setminus \dom{g})\cap \dom{f^{-1}}|\geq \mu$. If follows that $|(X \setminus \dom{g})f^{-1}|\geq \mu$ and so the result follows from equation \eqref{equation:collapse}.
        
\item This follows from \eqref{lem:defect_properties/defect<collapse}. 

    \end{enumerate}
\end{proof}

\begin{removed}
\begin{lemma} \label{lem:small_charts}
    Let $X$ be a set, $\mu$ a cardinal such that $\mu \leq \card{X}$, and $\fin(X,\mu) = \set{f \in I_X }{ r(f) < \mu}$. Then:
    \begin{enumerate}[~\normalfont(i)]
        \item $\fin(X,\mu)$ is an ideal of $I_X$;
        \label{lem:small_charts/F_ideal}
        
        \item if $S \subseteq I_X$ is an (inverse) subsemigroup, then so is $S \cup \fin(X,\mu)$;
        \label{lem:small_charts/S_cup_F_semigroup}
        
    \end{enumerate}
\end{lemma}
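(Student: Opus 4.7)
The plan is to reduce both parts to Lemma \ref{lem:defect_properties}\eqref{lem:defect_properties/rank_composition}, which says that $r(fg)\le \min(r(f),r(g))$.

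For (i), let $f \in \fin(X,\mu)$ and $g \in I_X$ be arbitrary. Then $r(fg) \le r(f) < \mu$ and $r(gf) \le r(f) < \mu$ by Lemma \ref{lem:defect_properties}\eqref{lem:defect_properties/rank_composition}, so both $fg$ and $gf$ belong to $\fin(X,\mu)$. Specialising $g$ to lie in $\fin(X,\mu)$ shows that $\fin(X,\mu)$ is closed under composition, hence a subsemigroup; and the displayed inclusions show that it absorbs $I_X$ on both sides, so it is an ideal.

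For (ii), given $a,b \in S \cup \fin(X,\mu)$, I would split into four cases. If $a,b \in S$, then $ab \in S$ since $S$ is a subsemigroup. In each of the three remaining cases at least one of $a,b$ lies in the ideal $\fin(X,\mu)$, so part (i) gives $ab \in \fin(X,\mu)$. Hence $S \cup \fin(X,\mu)$ is a subsemigroup of $I_X$. For the inverse subsemigroup case, I would additionally note that $\fin(X,\mu)$ is closed under the partial inverse operation: any $f \in I_X$ is a bijection from $\dom{f}$ onto $\im{f}$, so $r(f^{-1}) = \card{\dom{f}} = \card{\im{f}} = r(f)$, and therefore $f \in \fin(X,\mu)$ implies $f^{-1} \in \fin(X,\mu)$. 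Consequently, if $S$ is closed under inverses then so is $S \cup \fin(X,\mu)$.

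The argument is essentially bookkeeping once Lemma \ref{lem:defect_properties}\eqref{lem:defect_properties/rank_composition} is in hand, so I do not expect any real obstacle. The only point worth flagging is the inverse-closure check in the inverse subsemigroup case, which does not follow from (i) but from the elementary rank identity for bijections just noted.
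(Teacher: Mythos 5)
Your proof is correct and follows essentially the same route as the paper's: part (i) is deduced from Lemma \ref{lem:defect_properties}\eqref{lem:defect_properties/rank_composition}, and part (ii) from the ideal property together with the observation that $r(f)=r(f^{-1})$ makes $\fin(X,\mu)$ inverse-closed. You simply spell out the bookkeeping that the paper leaves implicit.
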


\begin{proof}
\begin{enumerate}[~\normalfont(i)]
    \item This follows from Lemma \ref{lem:defect_properties} condition \eqref{lem:defect_properties/rank_composition}, which states that the rank of a composite chart is always less than or equal to the ranks of its composites.
    \item This follows from \eqref{lem:small_charts/F_ideal} and the fact that $\fin(X,\mu)$ is an inverse subsemigroup of $I_X$ ($\fin(X,\mu)$ is clearly closed under taking inverses, as for all $f \in I_X$, $r(f) = r(\inv{f})$).

\end{enumerate}
\end{proof}
\end{removed}

Note that by Lemma \ref{lem:defect_properties} part \eqref{lem:defect_properties/rank_composition} the set 
$$\fin=\set{f \in I_X }{ r(f)<|X|}$$
is an ideal of $I_X$ and so $S\cup \fin$ is a subsemigroup of $I_X$ whenever $S$ is a subsemigroup of $I_X$.

\subsection{Containment in maximal subsemigroups}

It is shown in \cite{baumgartner1993maximal} that when $X$ is infinite, and under certain set theoretic assumptions, there exists a subgroup of $\Sym{X}$, which is not contained in any maximal subgroup of $\Sym{X}$.
However,  constructing such subgroups is difficult and there are several sufficient conditions which imply that a subgroup of $\Sym{X}$ must be contained in a maximal subgroup; see \cite{baumgartner1993maximal, subgroups_macpherson_neumann, maximal_macpherson_preager}.  Analogous conditions implying that a given subsemigroup of $X^X$ is contained in a maximal subsemigroup were given in \cite{maximal_east_mitchell_peresse}.  
In this paper, we require one such sufficient condition involving the concept of relative rank, which we now define.  

Let $S$ be an algebra (in the sense of Universal Algebra) and $G$ a subalgebra of $S$.  The \emph{relative rank} of $G$ in $S$ is the least cardinality of a set $U\subseteq S$ such that $G \cup U$ is a generating set for $S$.  It was shown in \cite[Lemma 6.9]{ subgroups_macpherson_neumann} that every subgroup $G$ of a group $S$ which has finite relative rank in $S$ is contained in a maximal subgroup of $S$.  (In fact,  it is sufficient for $G$ to have relative rank strictly less than the cofinality of $S$.) This result easily generalises to all algebras:

\begin{proposition} \label{prop:contained_relative_rank}
    Let $S$ be an algebra (in the sense of Universal Algebra) and $G$ a proper subalgebra of $S$.
    If $G$ has finite relative rank in $S$, then $G$ is contained in a maximal subalgebra of $S$.
\end{proposition}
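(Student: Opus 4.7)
The plan is to apply Zorn's Lemma to the poset $\mathcal{P}$ of all proper subalgebras of $S$ that contain $G$, ordered by inclusion. Since $G$ is itself proper by hypothesis, $\mathcal{P}$ is non-empty. Any maximal element $M$ of $\mathcal{P}$ will in fact be a maximal subalgebra of $S$: if $M \subsetneq M' \subsetneq S$ with $M'$ a subalgebra, then $M'$ contains $G$ and is proper, so $M' \in \mathcal{P}$, contradicting the maximality of $M$ inside $\mathcal{P}$.

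The main work is verifying the chain condition for Zorn. Given a non-empty chain $\mathcal{C} \subseteq \mathcal{P}$, I would take $T = \bigcup \mathcal{C}$ as the candidate upper bound and check $T \in \mathcal{P}$. That $T \supseteq G$ is immediate. Closure of $T$ under the fundamental operations of $S$ is the standard argument for universal-algebraic unions of chains: each operation has finite arity, so any finite tuple from $T$ lies inside a single member of $\mathcal{C}$, and that member is already closed under the operation.

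The critical point is that $T \neq S$, and this is exactly where the finite relative rank hypothesis enters. Fix a finite set $U = \{u_1, \dots, u_n\} \subseteq S$ with $\langle G \cup U \rangle = S$. If $T$ equalled $S$, then each $u_i$ would lie in some $A_i \in \mathcal{C}$; since $\mathcal{C}$ is totally ordered by inclusion, the largest of $A_1, \dots, A_n$ contains all of $U$, hence contains $G \cup U$, hence contains $\langle G \cup U \rangle = S$, contradicting the fact that this $A_i$ is a proper subalgebra. This rules out $T = S$ and finishes the chain condition; Zorn's Lemma then supplies the desired maximal subalgebra.

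I do not foresee any genuine obstacle here: the argument is essentially the standard one, spelled out for groups as \cite[Lemma 6.9]{subgroups_macpherson_neumann}, and the only adaptation is to observe that passing from groups to arbitrary algebras uses nothing beyond the finiteness of the arities of the fundamental operations. The one point worth emphasising is that finiteness of $|U|$ is used in only one place — selecting a single maximum among finitely many members of the chain — which makes clear why the hypothesis can be relaxed to ``relative rank strictly less than the cofinality of $S$'' as remarked in the preceding paragraph of the excerpt.
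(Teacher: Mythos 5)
Your proof is correct and follows essentially the same route as the paper: Zorn's Lemma applied to the poset of proper subalgebras containing $G$, with the finite relative rank used to show that the union of a chain of proper subalgebras cannot be all of $S$ (since the finite generating complement would have to land in a single member of the chain). Your additional remarks on closure of the union under finitary operations and on why maximal elements of the poset are maximal subalgebras are fine, and just make explicit what the paper leaves implicit.
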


\begin{proof}
    This proof is an application of Zorn's lemma.
    Let $\mathcal{A}$ be the partially ordered set of all proper subalgebras of $S$ containing $G$, ordered by inclusion.
    Given a chain $(A_i)_{i \in I}$ in $\mathcal{A}$, the union $\bigcup_{i \in I} A_i$ is a subalgebra of $S$. If $\bigcup_{i \in I} A_i$ is a proper subalgebra of $S$, then $(A_i)_{i \in I}$ is bounded by an element in $\mathcal{A}$, namely $\bigcup_{i \in I} A_i$.
So suppose that $\bigcup_{i \in I} A_i = S$. Since $G$ has finite relative rank in $S$, there exists a finite $F \subseteq S$ such that $G\cup F$ generates $S$.
    Since $(A_i)_{i \in I}$ is ordered by inclusion and $\bigcup_{i \in I} A_i = S$, there exists $j \in I$ such that $F \subseteq A_j$.
    But $G \subseteq A_j$ and so $A_j = S$, contradicting the assumption that $(A_i)_{i \in I}$ is a chain in $\mathcal{A}$.
    Hence every chain in $\mathcal{A}$ is bounded from above, which by Zorn's lemma implies that $\mathcal{A}$ contains a maximal element.
\end{proof}

\subsection{Classifying maximal subsemigroups and inverse subsemigroups}

The main aim of this subsection is to prove Theorem \ref{lem:total->all}, which allows us to classify certain kinds of maximal subsemigroups and maximal inverse subsemigroups of $I_X$. Along the way, we will provide methods to classify certain maximal subsemigroups of a general semigroup (Lemma \ref{lem:exactly_maximal}) and to obtain maximal inverse subsemigroups from maximal subsemigroups of a general inverse semigroup (Lemma \ref{lem:inverse_intersection}).

\begin{lemma} \label{lem:exactly_maximal}
    Let $S$ be a semigroup, $G$ a subsemigroup of $S$, and $\mathcal{M}$ be a set of proper subsemigroups of $S$ such that $G \subseteq M$ for all $M \in \mathcal{M}$.
    If every $U \subseteq S$ which is not contained in any element of $\mathcal{M}$  satisfies $\genset{G,U}=S$, then the maximal subsemigroups of $S$ containing $G$ are exactly the maximal elements of $\mathcal{M}$ under containment.
\end{lemma}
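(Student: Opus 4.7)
The plan is to prove the two inclusions separately, namely that every maximal subsemigroup of $S$ containing $G$ is a maximal element of $\mathcal{M}$, and conversely that every maximal element of $\mathcal{M}$ is a maximal subsemigroup of $S$ containing $G$. The main tool in both directions is the contrapositive of the hypothesis: if $\genset{G,U} \neq S$, then $U$ is contained in some element of $\mathcal{M}$. A small but convenient observation is that when $U$ happens to be a subsemigroup of $S$ containing $G$, one has $\genset{G,U}=U$, so the contrapositive specialises to the statement that every proper subsemigroup of $S$ containing $G$ is contained in some element of $\mathcal{M}$.

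For the forward direction, I would take a maximal subsemigroup $M$ of $S$ containing $G$ and apply the specialisation to $M$ itself, yielding $M \subseteq M'$ for some $M' \in \mathcal{M}$. Since $M'$ is a proper subsemigroup of $S$ and $M$ is maximal, this forces $M = M' \in \mathcal{M}$. Maximality of $M$ within $\mathcal{M}$ is then immediate: any strictly larger $M'' \in \mathcal{M}$ would still be a proper subsemigroup of $S$ strictly containing $M$, contradicting the maximality of $M$.

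For the reverse direction, let $M$ be a maximal element of $\mathcal{M}$, so that $M$ is a proper subsemigroup of $S$ containing $G$. I would argue by contradiction that no subsemigroup properly containing $M$ can itself be proper in $S$: if $M \subsetneq N \subsetneq S$ were such a subsemigroup, then applying the specialisation to $N$ would yield $M \subsetneq N \subseteq M'$ for some $M' \in \mathcal{M}$, contradicting the maximality of $M$ inside $\mathcal{M}$. Hence any subsemigroup strictly larger than $M$ equals $S$, and $M$ is a maximal subsemigroup of $S$ containing $G$.

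The proof is essentially a direct unpacking of the hypothesis and I do not anticipate any real obstacle. The only point requiring a touch of care is keeping the two flavours of ``maximal'' (maximal subsemigroup of $S$ containing $G$ versus maximal element of $\mathcal{M}$ under containment) straight, and making consistent use of the fact that every element of $\mathcal{M}$ is proper in $S$.
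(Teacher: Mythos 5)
Your proof is correct and takes essentially the same approach as the paper: both directions are direct unpackings of the (contrapositive of the) hypothesis, using that a subsemigroup $U$ containing $G$ satisfies $\genset{G,U}=U$. The only cosmetic difference is in the reverse direction, where the paper applies the hypothesis to the set $M \cup \{s\}$ for $s \in S\setminus M$ while you apply it to an intermediate proper subsemigroup $N \supsetneq M$; the two are interchangeable.
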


\begin{proof} Let $S$, $G$, and $\mathcal{M}$ satisfy the conditions of the lemma. We first show that all maximal subsemigroups of $S$ containing $G$ must be elements of $\mathcal{M}$. Aiming for a contradiction, suppose that $N$ is a maximal subsemigroup of $S$ such that $G \subseteq N$ and $N \notin \mathcal{M}$.
    Since $N$ is maximal in $S$ and $\mathcal{M}$ consists of proper subsemigroups of $S$, it follows that $N$ is not contained in any element of $\mathcal{M}$. Thus $N = \genset{G,N} = S$, which gives the required contradiction.
    
Clearly non-maximal elements of $\mathcal{M}$ cannot be maximal subsemigroups of $S$, so it only remains to show that every maximal element $M$ of $\mathcal{M}$ is a maximal subsemigroup of $S$.
Let $s \in S \setminus M$ be arbitrary. Then $M \cup \{s\}$ is not contained in any of the elements of $\mathcal{M}$ (because $M$ is maximal in $\mathcal{M}$) and so $\genset{M,s} = S$. Hence $M$ is maximal in $S$.
\end{proof}

\begin{lemma} \label{lem:intersection_of_inverses}
    Let $S$ be an inverse semigroup and $M$ a subsemigroup of $S$.
    Then $M \cap \inv{M}$ is the largest (with respect to containment) inverse subsemigroup of $S$ which is contained in $M$.
\end{lemma}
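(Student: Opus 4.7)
The plan is to verify three things: first, that $M\cap \inv M$ is an inverse subsemigroup of $S$; second, that it is contained in $M$ (which is immediate); and third, that any inverse subsemigroup of $S$ contained in $M$ is contained in $M\cap \inv M$.

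For closure under the semigroup operation, if $a,b\in M\cap \inv M$, then $ab\in M$ since $M$ is a subsemigroup, and $(ab)^{-1}=b^{-1}a^{-1}\in M$ because $a^{-1},b^{-1}\in M$ (as $a,b\in \inv M$), giving $ab\in \inv M$. Closure under inversion follows directly from the definition of $\inv M$ and the involution property $(s^{-1})^{-1}=s$ in any inverse semigroup.

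For the maximality claim, let $N$ be any inverse subsemigroup of $S$ with $N\subseteq M$. The key observation I would emphasise is that since $S$ is an inverse semigroup, inverses in $S$ are unique; hence the inverse of any $n\in N$ taken within the inverse subsemigroup $N$ must coincide with its inverse in $S$. Consequently $\inv N = N$, so $N\subseteq M$ forces $N=\inv N \subseteq \inv M$, and therefore $N\subseteq M\cap \inv M$.

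The only subtle point, and the one I would take care to spell out, is precisely this uniqueness-of-inverse argument showing that an inverse subsemigroup $N$ of the inverse semigroup $S$ is automatically closed under the $S$-inverse; everything else is a short verification. No other step presents a real obstacle.
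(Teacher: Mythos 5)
Your proof is correct and follows essentially the same route as the paper's: verify that $M\cap\inv{M}$ is an inverse subsemigroup contained in $M$, then observe that any inverse subsemigroup $N\subseteq M$ satisfies $N=\inv{N}\subseteq\inv{M}$. You spell out two details the paper leaves implicit (the closure computation via $(ab)^{-1}=b^{-1}a^{-1}$, and the fact that uniqueness of inverses in $S$ forces the inverse taken in $N$ to agree with the inverse taken in $S$), both of which are correct and worth noting.
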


\begin{proof}
Since $M$ is a semigroup, so are $M^{-1}$ and $M\cap M^{-1}$. Moreover, $M \cap \inv{M}$ is clearly inverse and contained in $M$. Let $V$ be an inverse subsemigroup of $S$ contained in $M$ and $v\in V$ be arbitrary. 
Since $V$ is inverse $v^{-1} \in V \subseteq M$ and so $v, v^{-1} \in M$. Hence $v \in M\cap M^{-1}$ and so $V \subseteq M\cap M^{-1}$ as required. 
\end{proof}

\begin{lemma} \label{lem:inverse_intersection}
Let $S$ be an inverse semigroup, $G$ an inverse subsemigroup of $S$ of finite relative rank, and $\mathcal{M}$ the set of all maximal subsemigroups of $S$ containing $G$.
Then the maximal inverse subsemigroups of $S$ containing $G$ are precisely the maximal elements of $\set{M \cap M^{-1} }{ M \in \mathcal{M}}$ under containment.
\end{lemma}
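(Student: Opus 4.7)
The plan is to prove both inclusions by leveraging Proposition \ref{prop:contained_relative_rank} (which embeds any proper subalgebra of finite relative rank into a maximal subalgebra) together with Lemma \ref{lem:intersection_of_inverses} (which says $M\cap M^{-1}$ is the largest inverse subsemigroup contained in $M$). The key observation that makes the argument work is that any subsemigroup $V$ of $S$ containing $G$ automatically has finite relative rank in $S$: if $F$ is a finite set witnessing the relative rank of $G$, then $V\cup F$ generates $S$. So every proper inverse subsemigroup of $S$ containing $G$ sits inside some $M\in\mathcal{M}$, and hence inside $M\cap M^{-1}$.

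For the forward inclusion, I would fix a maximal inverse subsemigroup $V$ of $S$ with $G\subseteq V$. Applying Proposition \ref{prop:contained_relative_rank} to $V$ (viewed as a subsemigroup, not an inverse subsemigroup, of $S$), I get a maximal subsemigroup $M$ with $V\subseteq M$; since $G\subseteq V\subseteq M$, we have $M\in\mathcal{M}$. By Lemma \ref{lem:intersection_of_inverses} this gives $V\subseteq M\cap M^{-1}$, and $M\cap M^{-1}$ is itself a proper inverse subsemigroup of $S$ (proper since $M\cap M^{-1}\subseteq M\subsetneq S$) containing $G$ (since $G$ is an inverse subsemigroup, $G=G^{-1}\subseteq M^{-1}$). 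Maximality of $V$ forces $V=M\cap M^{-1}$. To see that $M\cap M^{-1}$ is then maximal in $\{M'\cap M'^{-1}:M'\in\mathcal{M}\}$, suppose $M'\in\mathcal{M}$ with $M\cap M^{-1}\subseteq M'\cap M'^{-1}$; the right-hand side is again a proper inverse subsemigroup containing $G$, so the maximality of $V=M\cap M^{-1}$ forces equality.

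For the reverse inclusion, let $M\in\mathcal{M}$ be such that $M\cap M^{-1}$ is a maximal element of the indicated set. Given any proper inverse subsemigroup $W$ of $S$ with $M\cap M^{-1}\subseteq W$, we still have $G\subseteq W$, so $W$ has finite relative rank and Proposition \ref{prop:contained_relative_rank} yields a maximal subsemigroup $M'\supseteq W$ with $M'\in\mathcal{M}$. Lemma \ref{lem:intersection_of_inverses} applied to $W\subseteq M'$ gives $W\subseteq M'\cap M'^{-1}$, and then the chain $M\cap M^{-1}\subseteq W\subseteq M'\cap M'^{-1}$ together with the maximality hypothesis on $M\cap M^{-1}$ in the set forces $M\cap M^{-1}=W=M'\cap M'^{-1}$. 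Thus $M\cap M^{-1}$ is a maximal inverse subsemigroup of $S$ containing $G$.

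The main obstacle is conceptual rather than technical: one must be careful to apply Proposition \ref{prop:contained_relative_rank} to $V$ (or $W$) itself rather than to $G$, and to recognise that finite relative rank is inherited upward from $G$ to any overalgebra. Once this is noticed, the two directions are essentially mirror applications of Proposition \ref{prop:contained_relative_rank} followed by Lemma \ref{lem:intersection_of_inverses}, and the verification that the intersection is proper (which uses only $M\subsetneq S$) is immediate.
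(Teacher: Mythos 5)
Your proof is correct and follows essentially the same route as the paper's: both directions rest on the observation that finite relative rank is inherited upward from $G$, so Proposition \ref{prop:contained_relative_rank} can be applied to the inverse subsemigroup itself to embed it in some $M \in \mathcal{M}$, after which Lemma \ref{lem:intersection_of_inverses} pins it inside $M \cap M^{-1}$. Your write-up is in fact slightly more explicit than the paper's about why the overalgebra has finite relative rank and why $M \cap M^{-1}$ is proper, but the argument is the same.
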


\begin{proof}
First we show that every maximal element of $\mathcal{M}^*=\set{M \cap M^{-1} }{ M \in \mathcal{M}}$ is a maximal inverse subsemigroup of $S$ containing $G$.
By Lemma \ref{lem:intersection_of_inverses}, $\mathcal{M}^*$ consists of inverse subsemigroups of $S$. Since every $M \in \mathcal{M}$ contains $G=G^{-1}$, so does every $M\cap M^{-1} \in \mathcal{M}^*$.
Suppose that $N\cap N^{-1}$ is maximal in $\mathcal{M}^*$ and let $V$ be a proper inverse subsemigroup of $S$ containing $N\cap N^{-1}$. Since $G \subseteq N \cap \inv{N} \subseteq V$ and $G$ has finite relative rank in $S$, it follows from Proposition \ref{prop:contained_relative_rank} that $V$ must be contained in a maximal subsemigroup $M$ of $S$. Since $G \subseteq V \subseteq M$ it follows that $M \in \mathcal{M}$. By Lemma \ref{lem:intersection_of_inverses}, the largest inverse semigroup contained in $M$ is $M\cap M^{-1}$ and so $N \cap \inv{N} \subseteq V \subseteq M \cap \inv{M}$. 
    Since we assumed $N\cap \inv{N}$ to be a maximal element of $\mathcal{M}^*$, it follows that $N \cap \inv{N} = M \cap \inv{M}$. In particular, $V=N \cap \inv{N}$. We have shown that $N \cap N^{-1}$ is a maximal inverse subsemigroup of $S$.

    It remains to show that every maximal inverse subsemigroup $U$ of $S$ containing $G$ lies in $ \mathcal{M}^*$. Since $U$ is maximal as an inverse subsemigroup, $U$ has finite relative rank as an ordinary subsemigroup of $S$.  Thus it follows from Proposition \ref{prop:contained_relative_rank} that $U$ must be contained in some maximal subsemigroup $M$ of $S$. Since $G\subseteq U \subseteq M$, it follows that $M \in \mathcal{M}$. By Lemma \ref{lem:intersection_of_inverses}, $U$  is contained in the inverse semigroup $M\cap \inv{M}$. Hence $U = M \cap \inv{M}\in \mathcal{M}^*$. 
Clearly $U$ is a maximal in $\mathcal{M}^*$ since $\mathcal{M}$ consists of inverse subsemigroups of $S$.
\end{proof}

\begin{lemma} \label{lem:in-and-out}
    Let $X$ be an infinite set, $Y$ a subset of $X$, $G$ a subset of $I_X$ such that $\inv{G} = G$, and $\mathcal{M}$ a collection of subsets of $I_X$.
    Then the following are equivalent:
    \begin{enumerate}[~\normalfont(i)]
        \item For all subsemigroups $U$ of $I_X$ such that $G \subseteq U$ and $U \nsubseteq M$ for all $M \in \mathcal{M}$, there exists $f \in U$ such that $\dom{f} = X$ and $\im{f}\subseteq Y$.
        \item For all subsemigroups $U$ of $I_X$ such that $G \subseteq U$ and $U \nsubseteq \inv{M}$ for all $M \in \mathcal{M}$, there exists $g \in U$ such that $\dom{g}\subseteq Y$ and $\im{g} = X$.
    \end{enumerate}
\end{lemma}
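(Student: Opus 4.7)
The plan is to exploit the evident symmetry between (i) and (ii) under the inversion map $f \mapsto f^{-1}$. Observe that $f$ satisfies $\dom{f} = X$ and $\im{f} \subseteq Y$ if and only if $g := f^{-1}$ satisfies $\dom{g} \subseteq Y$ and $\im{g} = X$. Thus the two existence conclusions are interchanged by passing to inverses, and the task is to check that the hypotheses on $U$ in (i) and (ii) are also interchanged correctly.

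First I would establish the elementary fact that for any subsemigroup $U$ of $I_X$, the set $\inv{U} = \set{\inv{u}}{u \in U}$ is again a subsemigroup of $I_X$; this uses only the identity $\inv{u}\inv{v} = \inv{(vu)}$. Next I would note the two translations between the hypotheses:
\begin{enumerate}[~\normalfont(a)]
    \item Since $G = \inv{G}$, the inclusion $G \subseteq U$ is equivalent to $G \subseteq \inv{U}$.
    \item For any $M \subseteq I_X$, the inclusion $\inv{U} \subseteq M$ holds if and only if $U \subseteq \inv{M}$ (applying inversion to both sides, noting $\inv{(\inv{M})} = M$). Consequently, $U \nsubseteq \inv{M}$ for all $M \in \mathcal{M}$ is equivalent to $\inv{U} \nsubseteq M$ for all $M \in \mathcal{M}$.
\end{enumerate}

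With these translations in hand, the implication (i) $\Rightarrow$ (ii) proceeds as follows. Let $U$ satisfy the hypothesis of (ii). Apply (a) and (b) to see that $\inv{U}$ satisfies the hypothesis of (i). Hence there exists $f \in \inv{U}$ with $\dom{f} = X$ and $\im{f} \subseteq Y$, and then $g = \inv{f} \in U$ witnesses (ii). The implication (ii) $\Rightarrow$ (i) is proved by the same argument with the roles of $U$ and $\inv{U}$ reversed, once more using (a) and (b).

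I do not expect any real obstacle here: the content of the lemma is essentially bookkeeping about how inversion intertwines the domain/image conditions and the containment conditions, and the only substantive input is the hypothesis $G = \inv{G}$, which is exactly what is needed to make (a) work. The slight care point is item (b), where one must remember that $\mathcal{M}$ itself is not assumed to be closed under inversion, so the switch between the quantifier ``$U \nsubseteq M$ for all $M \in \mathcal{M}$'' and ``$\inv{U} \nsubseteq M$ for all $M \in \mathcal{M}$'' really does require the $\inv{M}$ appearing in the statement of (ii).
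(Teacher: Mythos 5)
Your proof is correct and is essentially the same as the paper's: both arguments apply statement (i) to $\inv{U}$, using $G=\inv{G}$ and the equivalence $U\nsubseteq\inv{M}\iff\inv{U}\nsubseteq M$, and then invert the resulting $f$. The only addition is your explicit check that $\inv{U}$ is a subsemigroup, which the paper leaves implicit.
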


\begin{proof}
    (i)$\implies$(ii): Assume (i) and let $U$ be a subsemigroup of $I_X$ such that $G \subseteq U$ and $U \nsubseteq \inv{M}$ for all $M \in \mathcal{M}$.
    Then $G =G^{-1}\subseteq \inv{U}$ and $\inv{U} \nsubseteq M$ for all $M \in \mathcal{M}$.
    So by (i) there exists $f \in \inv{U}$ such that $\dom{f} = X$ and $\im{f} \subseteq Y$.
    Thus $g=f^{-1}\in U$ satisfies the required conditions.

    (ii)$\implies$(i): This follows by a symmetric argument.
\end{proof}

Recall that a \emph{moiety} of an infinite set $X$ is a subset $Y$ of $X$ such that $|Y|=|X\setminus Y|=|X|$.

\begin{theorem} \label{lem:total->all}
    Let $X$ be an infinite set, $Y$ a moiety of $X$, and $G$ an inverse subsemigroup of $I_X$ satisfying
\begin{equation}\label{lem:total->all/YSym}
\Sym{Y} \subseteq \set{g|_{Y} }{ g \in G}.
\end{equation}
A collection $\mathcal{M}$ of subsets of $I_X$ is the set of all maximal subsemigroups of $I_X$ which contain $G$ if and only if the following five conditions hold. 
    \begin{enumerate}[~\normalfont(i)]
        \item $G \subseteq M$ for all $M \in \mathcal{M}$.
        \label{lem:total->all/G_subset}
        \item $M$ is a proper subsemigroup of $I_X$ for all $M \in \mathcal{M}$.
        \label{lem:total->all/M_subsemigroup}
        \item $M \nsubseteq N$ for all distinct $M,N \in \mathcal{M}$.
        \label{lem:total->all/M_not_contained}
        \item $\inv{M} \in \mathcal{M}$ for all $M \in \mathcal{M}$.
        \label{lem:total->all/M_inverses}
        \item For all subsemigroups $U$ of $I_X$ such that $G \subseteq U$ and $U \nsubseteq M$ for all $M\in \mathcal{M}$, there exists a total $f \in U$ such that $\im{f}\subseteq Y$.
        \label{lem:total->all/in-out-condition}
    \end{enumerate}
Moreover, if the five conditions above are satisfied,  then the maximal inverse subsemigroups of $I_X$ which contain $G$ are precisely the maximal elements of $\set{M\cap M^{-1}}{M \in \mathcal{M}}$ under containment. 
\end{theorem}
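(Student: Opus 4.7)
The plan is to prove the necessity and sufficiency of the biconditional separately and then derive the moreover statement.

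\textbf{Necessity} ($\Rightarrow$): Assume $\mathcal{M}$ is the set of all maximal subsemigroups of $I_X$ containing $G$. Conditions (i) and (ii) are immediate by construction, and (iii) holds because distinct maximal subsemigroups of $I_X$ are incomparable. For (iv), $G^{-1}=G$ since $G$ is inverse, and because $I_X$ is inverse the map $M\mapsto M^{-1}$ preserves both properness and maximality, so $\mathcal{M}$ is closed under inversion. For (v), I would first establish that $G$ has finite relative rank in $I_X$: fixing a bijection $j\colon X\to Y$ (which exists since $Y$ is a moiety), the hypothesis $\Sym{Y}\subseteq\{h|_Y : h\in G\}$ yields all of $\Sym{X}$ via conjugation by $j$, the element $j^{-1}j$ is the partial identity $\mathrm{id}_Y$, conjugating by $\Sym{X}$ and taking products of partial identities produces $\mathrm{id}_C$ for every $C\subseteq X$, and together with $\Sym{X}$ these generate $I_X$. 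Given finite relative rank, Proposition \ref{prop:contained_relative_rank} shows that every proper subsemigroup of $I_X$ containing $G$ lies in some $M\in\mathcal{M}$; hence any $U$ qualifying in (v) must equal $I_X$, and (v) is then witnessed by any bijection $X\to Y$.

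\textbf{Sufficiency} ($\Leftarrow$): Assume (i)–(v). I would apply Lemma \ref{lem:exactly_maximal} with $S=I_X$; by (iii) every element of $\mathcal{M}$ is maximal within $\mathcal{M}$, so it remains to show that any subsemigroup $U$ of $I_X$ with $G\subseteq U$ and $U\nsubseteq M$ for all $M\in\mathcal{M}$ satisfies $\langle G,U\rangle = I_X$. For such a $U$, condition (v) provides a total $f\in U$ with $\im{f}\subseteq Y$; condition (iv) gives $U\nsubseteq M^{-1}$ for all $M\in\mathcal{M}$, so Lemma \ref{lem:in-and-out} produces $g\in U$ with $\dom{g}\subseteq Y$ and $\im{g}=X$. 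The remaining task is to show $\langle G,f,g\rangle = I_X$. My strategy would be to use products of the form $f\,p\,g$ with $p\in G$ restricting to a chosen $\pi\in\Sym{Y}$ (and, where necessary, iterated products) first to produce all of $\Sym{X}$, then to produce partial identities $\mathrm{id}_C$ for arbitrary $C\subseteq X$ by composing $f$, $g$ and elements of $\Sym{X}$, and finally to recover an arbitrary element of $I_X$ as a product of a partial identity with a permutation of $X$.

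\textbf{Moreover}: Once sufficiency is established, $\mathcal{M}$ is the set of all maximal subsemigroups of $I_X$ containing $G$. Lemma \ref{lem:inverse_intersection} then applies directly (using the finite relative rank of $G$ proved in the necessity argument) and yields that the maximal inverse subsemigroups of $I_X$ containing $G$ are precisely the maximal elements of $\{M\cap M^{-1} : M\in\mathcal{M}\}$ under containment.

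\textbf{Main obstacle}: The most delicate step is proving $\langle G,f,g\rangle = I_X$ in the sufficiency direction. The awkwardness is that the complement sizes $|Y\setminus\im{f}|$ and $|Y\setminus\dom{g}|$ need not agree, so no single $\pi\in\Sym{Y}$ need satisfy $\pi(\im{f})=\dom{g}$, and a one-step reduction $f\,p\,g\in\Sym{X}$ is unavailable. Overcoming this likely requires carefully iterated products of $f$, $g$ and $G$-elements, possibly with a case split on whether $\im{f}$ and $\dom{g}$ are moieties of $Y$, in order to produce generating elements of $I_X$ of every possible rank, collapse, and defect.
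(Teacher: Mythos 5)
Your overall architecture matches the paper's: necessity via incomparability of maximal subsemigroups plus finite relative rank and Proposition \ref{prop:contained_relative_rank}, sufficiency via Lemma \ref{lem:exactly_maximal} and Lemma \ref{lem:in-and-out}, and the final claim via Lemma \ref{lem:inverse_intersection}. However, there is a genuine gap at exactly the point you flag as the ``main obstacle'': the identity $\genset{G,f,g}=I_X$ is the crux of the sufficiency direction, and your proposal does not prove it --- it only records that the one-step product $fpg$ seems unavailable because $|Y\setminus \im{f}|$ and $|Y\setminus \dom{g}|$ need not agree. Your staged fallback (first generate $\Sym{X}$, then partial identities, then all of $I_X$) hits the same wall at its first stage, since $fpg\in\Sym{X}$ already forces $(\im{f})p=\dom{g}$ and hence the very matching of complement sizes that you cannot guarantee.

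The paper's resolution is a short trick you are missing: replace $f$ by $f^2$ and $g$ by $g^2$. Since $f$ is total and injective with $\im{f}\subseteq Y$ and $Y$ is a moiety of $X$, the set $(X\setminus Y)f$ is a subset of $Y\setminus \im{f^2}$ of cardinality $|X|$, so $\im{f^2}$ is a moiety of $Y$; dually $\dom{g^2}$ is a moiety of $Y$. Once both are moieties of $Y$ one does not need any $\pi$ with $(\im{f})\pi=\dom{g}$: for an arbitrary target $h\in I_X$ the partial permutation $p=f^{-1}hg^{-1}$ of $Y$ satisfies $|Y\setminus\dom{p}|=|Y\setminus\im{p}|=|Y|$, hence extends to some $p'\in\Sym{Y}$, which by hypothesis \eqref{lem:total->all/YSym} lifts to some $p''\in G$ with $p''|_{Y}=p'$; then $h=f\left(f^{-1}hg^{-1}\right)g=fp''g$ because $ff^{-1}$ and $g^{-1}g$ are the identity on $X$ ($f$ being total and $g$ surjective). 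This produces every $h\in I_X$ in a single product, bypassing the $\Sym{X}$-then-idempotents staging entirely. As a bonus, $fGg=I_X$ immediately gives finite relative rank of $G$ (any injection $X\to Y$ with moiety image serves as $f$), which is cleaner than the generation sketch in your necessity argument; that sketch also has a flaw, since products of $\Sym{X}$-conjugates of $\mathrm{id}_Y$ only yield partial identities on sets whose complement has cardinality $|X|$.
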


\begin{proof}    

First we will  show that the five conditions together imply that $\mathcal{M}$ is the set of all maximal subsemigroups of $I_X$ which contain $G$. By conditions \eqref{lem:total->all/G_subset} and \eqref{lem:total->all/M_subsemigroup} all elements of $\mathcal{M}$ are proper subsemigroups of $I_X$ and contain $G$ and by part \eqref{lem:total->all/M_not_contained},  every element of $\mathcal{M}$ is maximal in $\mathcal{M}$.  Hence, by Lemma \ref{lem:exactly_maximal}, it suffices to let  $U$ be an arbitrary subsemigroup of $I_X$ such that $G \subseteq U$ and $U \nsubseteq M$ for all $M \in \mathcal{M}$ and show that $U = I_X$.
    By condition \eqref{lem:total->all/in-out-condition} there exists $f \in U$ such that $\dom{f} = X$ and $\im{f} \subseteq Y$. Note that, by condition \eqref{lem:total->all/M_inverses}, we also have that $U \nsubseteq \inv{M}$ for all $M \in \mathcal{M}$. Hence Lemma \ref{lem:in-and-out} implies that there exists $g \in U$ such that $\dom{g}\subseteq Y$ and $\im{g} = X$. Note that $\im{f^2}$ is a moiety of $Y$ since $Y$ is a moiety of $X$ and $\im{f}\subseteq Y$. Similarly, $\dom{g^2}$ is a moiety of $Y$. So, by replacing $f$ and $g$ with $f^2$ and $g^2$ if necessary, we may assume that $\im{f}$ and $\dom{g}$ are moieties of $Y$.
  
 We will now show that $f G g=I_X$. Let $h \in I_X$ be arbitrary and define $p=f^{-1}hg^{-1}$. Then $\dom{p}\subseteq \im{f}$ and $\im{p} \subseteq \dom{g}$. Thus $p$ is a partial permutation of $Y$ with $|Y \setminus \dom{p}|=|Y \setminus \im{p}|=|Y|$. Hence $p$ may be extended to a permutation $p'$ of $Y$, i.e. there exists $p' \in \Sym{Y}$ such that $\eval{p'}_{\dom{p}}=p$. By \eqref{lem:total->all/YSym} there exists $p'' \in G \subseteq U$ such that $\eval{p''}_{Y}=p'$. In particular, $\eval{p''}_{\im{f}}=p$. Note that $ff^{-1}=g^{-1}g$ is the identity on $X$ since $f$ and $g^{-1}$ are total. Hence
$$h=f\left(f^{-1}hg^{-1}\right)g=fpg=fp''g\in U.$$
Thus $f G g=I_X$ since $h$ was arbitrary. In particular, $U=I_X$, since $f,g \in U$ and $G \subseteq U$.
    
For the converse implication, assume that $\mathcal{M}$ is the collection of all maximal subsemigroups of $I_X$ which contain $G$. Clearly conditions (i)-(iv) hold. To see that \eqref{lem:total->all/in-out-condition} holds let $U$ be a subsemigroup of $I_X$ with $G \subseteq U$ and $U \not \subseteq M$ for all $M \in \mathcal{M}$. We have shown in the preceding paragraph that $fGg=I_X$. In particular, $G$ has has finite relative rank in $I_X$ and hence so does $U\supseteq G$.  If $U$ were a proper subsemigroup of $I_X$, then, by Lemma \ref{prop:contained_relative_rank}, $U$ would be contained in an element of $\mathcal{M}$. Hence $U=I_X$ and, in particular, $U$ contains an element $f$ with the required properties.

The final part of the theorem follows form Lemma \ref{lem:inverse_intersection},  since,  as we have shown, 
$G$ has finite relative rank in $I_X$.  
\end{proof}

\begin{removed}

\subsection{The Finite Case}

In \cite{xiuliang1999classification} the maximal inverse subsemigroups of finite symmetric inverse monoids are classified.
In this section we will recount the result and show that it is in fact also a complete classification of the maximal (not necessarily inverse) subsemigroups of finite symmetric inverse monoids.

\begin{lemma}[{\cite[Theorem 3.1]{gomes1987ranks}}] \label{lem:n-1_generate}
    Let $n \geq 1$ be a natural number and $f \in I_n$ a chart such that $r(f) = n-1$.
    Then $\genset{\Sym{n},f} = I_n$.
\end{lemma}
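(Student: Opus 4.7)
The plan is to show that from $\Sym{n}$ together with $f$, we can generate every partial bijection of $n$ by first producing the partial identity on $\dom{f}$, then building partial identities on arbitrary subsets, and finally using these together with $\Sym{n}$ to write an arbitrary element of $I_n$ as a product.

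First, I would reduce to the case of a partial identity. Since $r(f) = n-1$, the element $f$ is a bijection between two $(n-1)$-subsets $A = \dom{f}$ and $B = \im{f}$ of $n$. Since $|B| = |A|$, the inverse $\inv{f}$ is a bijection $B \to A$ which extends to a full permutation $\pi \in \Sym{n}$ with $\pi|_B = \inv{f}$. Using that we compose left-to-right, for every $x \in A$ we have $x(f\pi) = (xf)\pi = (xf)\inv{f} = x$, so $f\pi$ is the partial identity $e_A$ on $A$. Hence $e_A \in \genset{\Sym{n},f}$.

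Next, I would build the partial identity on every subset of $n$. Given any $(n-1)$-subset $A'$, choose $\sigma \in \Sym{n}$ with $A\sigma = A'$; a direct computation gives $\inv{\sigma} e_A \sigma = e_{A'}$, so all partial identities on $(n-1)$-subsets lie in $\genset{\Sym{n},f}$. For two $(n-1)$-subsets $A_1, A_2$ we have $e_{A_1} e_{A_2} = e_{A_1 \cap A_2}$, and since any $(n-2)$-subset can be written as the intersection of two $(n-1)$-subsets, we obtain every partial identity of rank $n-2$; iterating this composition trick (or equivalently, using induction on $n - k$), we obtain $e_C$ for every $C \subseteq n$.

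Finally, given an arbitrary $h \in I_n$ with $\dom{h} = C$ and $\im{h} = D$, pick any permutation $h' \in \Sym{n}$ extending $h$ (which exists because $|C| = |D|$). Then $x(e_C h') = xh' = xh$ for $x \in C$ and is undefined otherwise, so $h = e_C h' \in \genset{\Sym{n},f}$. Since $h$ was arbitrary, this gives $\genset{\Sym{n},f} = I_n$.

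The argument is largely routine; there is no real obstacle, only the bookkeeping of compositions in left-to-right convention. The one thing to verify carefully is the intersection step, which requires noting that every $k$-subset of $n$ is the intersection of finitely many $(n-1)$-subsets, so that iterating $e_{A_1} e_{A_2} = e_{A_1 \cap A_2}$ produces all partial identities needed for the final factorisation.
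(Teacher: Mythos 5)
Your proof is correct, but it takes a different route from the paper's. The paper's argument works by controlled rank reduction: starting from $f$, it interleaves permutations $a_i$ chosen so that $(n \setminus \im{fa_0\cdots f})a_i \subseteq \dom{f}$, so that each further factor of $f$ drops the rank by exactly one; this produces a single element $\alpha$ of each prescribed rank $m$, and an arbitrary $g$ of rank $m$ is then obtained as $b\alpha c$ for suitable permutations $b,c$. You instead route everything through the idempotent (partial identity) structure of $I_n$: you first convert $f$ into the rank-$(n-1)$ idempotent $e_{\dom{f}}$ by post-composing with a permutation extending $\inv{f}$, conjugate to get all rank-$(n-1)$ idempotents, use $e_{A_1}e_{A_2}=e_{A_1\cap A_2}$ to get every idempotent, and finish with the factorisation $h=e_{\dom{h}}h'$. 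At bottom both arguments exploit the same mechanism (each extra factor involving $f$, suitably conjugated, cuts the domain by one point), but your organisation is more structural and makes transparent the standard fact that $I_n=\langle \Sym{n}, E(I_n)\rangle$, whereas the paper's is a direct product construction with slightly more delicate bookkeeping in the choice of the $a_i$. Both are complete; your intersection step is justified since every proper subset $C$ of $n$ equals $\bigcap_{y\notin C}(n\setminus\{y\})$, a finite intersection of $(n-1)$-subsets, and $e_n$ is already the identity of $\Sym{n}$.
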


\begin{proof}
    Let $g \in \fin_n$ be an arbitrary chart with $r(g) = m < n$.
    We will show that $g \in \genset{\Sym{n},f}$.
    We start by constructing a chain of permutations $(a_i)_{i \in n-m}$ where $a_i \in \Sym{n}$ for all $i$.
    Let $a_0 \in \Sym{n}$ be a permutation such that $(n \setminus \im{f}) a_0 \subseteq \dom{f}$.
    For the remaining $i \in n-m$ recursively define $a_i$ such that $(n \setminus \im{fa_0 \dots fa_{i-1}f}) a_i \subseteq \dom{f}$.
    Then the composite chart $\alpha = fa_0 \dots fa_{n-m-1} \in \genset{\Sym{n},f}$ has rank $m$.
    Finally, we pick permutations $b,c \in \Sym{n}$ such that $(\dom{g})b = \dom{\alpha}$ and $\inv{\alpha} \inv{b} g \subseteq c$.
    Then $g = b \alpha c$, as required.
\end{proof}

\begin{theorem}[{\cite[Theorem 2.3]{xiuliang1999classification}}] \label{thm:IX_finite}
    Let $n \geq 1$ be a natural number.
    Then the maximal subsemigroups of $I_n$ are:
    \begin{align*}
        S &= \Sym{n} \cup \fin(n,n-1) \\
        S_G &= G \cup \fin_n
    \end{align*}
    where $G$ is a maximal subgroup of $\Sym{n}$.
\end{theorem}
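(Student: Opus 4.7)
The plan is to prove each direction of the classification separately. First I would verify that $S = \Sym{n} \cup \fin(n,n-1)$ and each $S_G = G \cup \fin_n$ (where $G$ is a maximal subgroup of $\Sym{n}$) are proper subsemigroups of $I_n$ and are maximal, then show that any maximal subsemigroup of $I_n$ must take one of these two forms. The fact that $S$ and $S_G$ are subsemigroups follows from Lemma \ref{lem:defect_properties}\eqref{lem:defect_properties/rank_composition}: since $r(fg) \leq \min(r(f), r(g))$, both $\fin(n,n-1)$ and $\fin_n$ are ideals of $I_n$, and the union of an ideal with a subgroup of the unit group is always a subsemigroup. Both are proper: $S$ omits all rank $n-1$ elements, and $S_G$ omits the permutations in $\Sym{n}\setminus G$.

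For maximality, let $f \in I_n \setminus S$. Then $f$ is neither a permutation nor of rank less than $n-1$, so $r(f)=n-1$, and Lemma \ref{lem:n-1_generate} gives $\genset{\Sym{n},f} = I_n$, hence $\genset{S,f}=I_n$. For $S_G$, since $\fin_n \subseteq S_G$, any $f \in I_n \setminus S_G$ must lie in $\Sym{n}\setminus G$; by maximality of $G$ in $\Sym{n}$ we have $\genset{G,f}=\Sym{n}$, so $\genset{S_G,f} = \Sym{n} \cup \fin_n = I_n$.

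For the converse, let $M$ be an arbitrary maximal subsemigroup of $I_n$ and split into two cases. If $\Sym{n}\subseteq M$, then, since $M$ is a proper subsemigroup, Lemma \ref{lem:n-1_generate} forbids $M$ from containing any element of rank $n-1$ (otherwise $\genset{\Sym{n},f}=I_n\subseteq M$). Hence $M\subseteq \Sym{n}\cup \fin(n,n-1)=S$, and maximality forces $M=S$. If instead $\Sym{n}\nsubseteq M$, set $G = M\cap \Sym{n}$; this is a subsemigroup of the finite group $\Sym{n}$, hence a proper subgroup. Next I would observe that $M\cup \fin_n$ is a subsemigroup (since $\fin_n$ is an ideal) and is proper (it still misses $\Sym{n}\setminus G$), so maximality of $M$ gives $\fin_n\subseteq M$; therefore $M = G\cup \fin_n$. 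Finally, $G$ must be maximal in $\Sym{n}$, for otherwise any strictly intermediate subgroup $G\subsetneq G'\subsetneq \Sym{n}$ would yield a proper subsemigroup $G'\cup \fin_n$ strictly containing $M$, contradicting maximality. Hence $M = S_G$ for some maximal subgroup $G$ of $\Sym{n}$.

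The proof is structurally routine once Lemma \ref{lem:n-1_generate} is in hand; there is no serious obstacle. The only point that requires a little care is the subclaim in the second case that $\fin_n \subseteq M$: one must argue using maximality that $M$ already absorbs the ideal $\fin_n$, and then separately that the intersection $M\cap\Sym{n}$ is maximal rather than merely proper in $\Sym{n}$. Both use the standard move of enlarging $M$ by an ideal-compatible piece to obtain a proper oversemigroup.
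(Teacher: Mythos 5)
Your proposal is correct and follows essentially the same route as the paper's own proof: both establish that $S$ and $S_G$ are subsemigroups via the ideal property of $\fin(n,n-1)$ and $\fin_n$, use Lemma \ref{lem:n-1_generate} for the maximality of $S$ and the maximality of $G$ in $\Sym{n}$ for that of $S_G$, and then split the converse into the cases $\Sym{n}\subseteq M$ and $\Sym{n}\nsubseteq M$. If anything, your write-up is more careful than the paper's at two points it leaves implicit, namely that maximality of $M$ forces $\fin_n\subseteq M$ and that $M\cap\Sym{n}$ must be a \emph{maximal} (not merely proper) subgroup of $\Sym{n}$.
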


\begin{proof}
    It follows from Lemma \ref{lem:small_charts} condition \eqref{lem:small_charts/S_cup_F_semigroup} that $S$ and $S_G$ are indeed semigroups.
    $S$ being maximal follows from Lemma \ref{lem:n-1_generate} (since all elements of $I_n \setminus S$ have rank $n-1$), and $S_G$ being maximal follows from the fact that maximal subgroups of $\Sym{n}$ are also maximal subsemigroups (since $n$ is finite, so all elements of $\Sym{n}$ have finite order).
    All that remains is to show that there are no other maximal subsemigroups of $I_n$.

    Let $T$ be a maximal subsemigroup of $I_X$.
    If $\Sym{n} \subseteq T$, then $T$ cannot contain any charts with rank $n-1$, as otherwise we would get that $T = \Sym{n}$ by Lemma \ref{lem:n-1_generate}.
    As any semigroup satisfying these criteria is clearly subset of $S$, we can conclude that $S$ is the unique maximal subsemigroup of $I_X$ containing $\Sym{X}$.
    If $\Sym{n} \nsubseteq T$, then there must exist a maximal sub(semi)group $G$ of $\Sym{n}$ such that $G \subseteq T$ (since $\fin_n$ is an ideal of $I_n$, meaning that no permutations can be generated using elements of $\fin_n$).
    Again, any semigroup satisfying these conditions must be a subset $S_G$, meaning that the maximal subsemigroups of $I_n$ not containing $\Sym{n}$ must be of the form $S_G = G \cup \fin_n$, as required.
\end{proof}

Theorem \ref{thm:IX_finite} above shows that the semigroups, which in \cite{xiuliang1999classification} were shown to be the maximal inverse subsemigroups of $I_n$, are actually also the maximal subsemigroups of $I_n$.
We will now show that a simple observation quickly recovers the result that they are the maximal inverse subsemigroups of $I_n$.

\begin{corollary}
    Let $n \geq 1$ be a natural number.
    Then the maximal inverse subsemigroups of $I_n$ are exactly the semigroups described in Theorem \ref{thm:IX_finite}.
\end{corollary}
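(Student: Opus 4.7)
The plan is to leverage Theorem \ref{thm:IX_finite} directly by verifying that every semigroup listed there is already closed under inversion; once this is in hand, the coincidence of maximal subsemigroups and maximal inverse subsemigroups of $I_n$ follows by a short poset argument. There is no real obstacle here: the corollary is essentially a bookkeeping observation about Theorem \ref{thm:IX_finite}.

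First I would verify that each semigroup described in Theorem \ref{thm:IX_finite} is an inverse subsemigroup of $I_n$. The groups $\Sym{n}$ and $G$ are trivially closed under taking inverses. For $\fin(n,n-1)$ and $\fin_n$, the identity $r(f) = r(\inv{f})$, which holds for every $f \in I_n$, shows that these sets are closed under inversion. A union of inverse-closed subsets of $I_n$ is inverse-closed, so $S = \Sym{n} \cup \fin(n,n-1)$ and $S_G = G \cup \fin_n$ are inverse subsemigroups of $I_n$.

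Next I would deduce both inclusions. If $M$ is any of $S$ or $S_G$, then $M$ is maximal among all proper subsemigroups, so a fortiori no proper inverse subsemigroup can strictly contain $M$; hence $M$ is a maximal inverse subsemigroup of $I_n$. Conversely, suppose $T$ is a maximal inverse subsemigroup of $I_n$. Since $I_n$ is finite, $T$ is contained in some maximal subsemigroup $M$ of $I_n$, which by Theorem \ref{thm:IX_finite} is one of $S$ or $S_G$, and hence is itself an inverse subsemigroup. Because $M$ is a proper inverse subsemigroup of $I_n$ containing $T$, the maximality of $T$ among inverse subsemigroups forces $T = M$. This gives the claimed classification.
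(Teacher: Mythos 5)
Your proof is correct and rests on the same key observation as the paper's: the semigroups listed in Theorem \ref{thm:IX_finite} are closed under inversion (groups are, and $r(f)=r(\inv{f})$ handles the ideals of low-rank elements), so the maximal subsemigroups and the maximal inverse subsemigroups of $I_n$ coincide. Your packaging---deducing the result from the \emph{statement} of Theorem \ref{thm:IX_finite} together with the fact that every proper subsemigroup of a finite semigroup is contained in a maximal one---is only a slightly more self-contained version of what the paper phrases as ``the proof of Theorem \ref{thm:IX_finite} doubles as a proof,'' and of its alternative via Lemma \ref{lem:inverse_intersection}.
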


\begin{proof}
    It should be clear that the semigroups described in Theorem \ref{thm:IX_finite} are indeed inverse semigroups (since groups are inverse semigroups and if $f \in I_n$, then $r(f) = r(\inv{f})$).
    With this information in mind, the proof of Theorem \ref{thm:IX_finite} then doubles as a proof that the described semigroups are the maximal inverse subsemigroups of $I_n$.
    Alternatively we can simply apply Lemma \ref{lem:inverse_intersection}, where we let $G = \emptyset$ (since finite semigroups are finitely generated).
\end{proof}

All of the previous results in this section have demanded that the natural number $n$ be no less than 1.
Not feeling like leaving out our favourite number, let us briefly discuss the case when $n=0$.
We will first make sense of the notion of a function on the empty set.
Clearly $\emptyset \subseteq \emptyset \times \emptyset$, and since $\emptyset$ has no elements, it vacuously satisfies any "for all" conditions on said elements.
Hence, $\emptyset$ is the one and only function from the empty set to itself.
This means that $\Sym{0} = I_0 = T_0 = 0^0 = 1 = \set{0} = \set{\emptyset}$ (where $T_X$ is simply another notation for the full transformation semigroup).

\begin{proposition}
    The maximal (inverse) subsemigroup of $I_0$ is $\emptyset$. 
\end{proposition}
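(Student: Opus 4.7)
The plan is to unpack the statement by first recalling that $I_0$ consists solely of the empty partial function, so $I_0 = \{\emptyset\}$ as noted in the preceding discussion. From there, the problem reduces to identifying the proper (inverse) subsemigroups of a one-element semigroup and observing that there is essentially only one candidate.

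First I would verify that $\emptyset$ qualifies as a subsemigroup, and indeed an inverse subsemigroup, of $I_0$. The semigroup axioms are vacuous on an empty carrier set (closure under the binary operation and, for the inverse case, closure under taking inverses hold vacuously), so $\emptyset$ is both a subsemigroup and an inverse subsemigroup of any semigroup. Next I would observe that $\emptyset$ is a \emph{proper} subsemigroup of $I_0$ since $I_0 = \{\emptyset\}$ contains the element $\emptyset$ (the empty function) which is not an element of the set $\emptyset$.

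Finally I would argue maximality: any proper subset of $I_0$ is necessarily empty, since $I_0$ has exactly one element. Hence $\emptyset$ is the unique proper (inverse) subsemigroup of $I_0$, and therefore trivially the maximal one. There is no real obstacle here; the only subtle point is the distinction between the empty function $\emptyset \in I_0$ and the empty set $\emptyset \subseteq I_0$ viewed as a subsemigroup, which I would flag explicitly to avoid confusion between an element of $I_0$ and a subset of $I_0$.
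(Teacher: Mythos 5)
Your proposal is correct and follows essentially the same route as the paper: the paper's entire proof is the observation that $I_0$ has exactly one element, and your argument simply spells out the consequences of that fact (that $\emptyset$ is vacuously an inverse subsemigroup and is the only proper subset, hence the unique maximal one). Your explicit flagging of the distinction between the empty function as an element of $I_0$ and the empty set as a subsemigroup is a reasonable clarification but does not change the substance.
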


\begin{proof}
    $I_0$ has exactly one element.
\end{proof}

\end{removed}

\subsection{From $I_X$ to $X^X$ and back again}\label{fromIXtoXX}

We will prove the main theorems of this paper by applying Theorem \ref{lem:total->all} to the relevant set 
$\mathcal{M}$ of subsemigroups of $I_X$.  The most involved step in each case is to establish that condition (v) of Theorem \ref{lem:total->all} is satisfied. That is,  we will need to show that certain subsemigroups $U$ of $I_X$ always contain a total element with certain properties. 
As stated in the introduction,  our main theorems are analogues of classification results for maximal subsemigroups of $X^X$.  Moreover,  the proofs of these results for $X^X$ as presented in \cite{maximal_east_mitchell_peresse} involve an analogous key step: to show that certain subsemigroups $V$ of $X^X$ always contain an injective element with certain properties. Of course, the total elements of $I_X$ are precisely the injective elements of $X^X$. 
Our strategy in performing the aforementioned key steps in the proofs of our main theorems will be as follows.

\vspace{\baselineskip}\noindent
Step 1: Given a subsemigroup $U$ of $I_X$, define an analogous subsemigroup $V$ of $X^X$.

\vspace{\baselineskip}\noindent
Step 2: Use results from \cite{maximal_east_mitchell_peresse} to show that $V$ contains an injective map $f$ with certain properties.

\vspace{\baselineskip}\noindent Step 3: Conclude that $f\in U$.

\vspace{\baselineskip}
We now provide the required definitions and results to make Step 1 precise and Step 3 valid. 
An \emph{assignment of transversals} for a subset $V$ of $X^X$ is a function $\Lambda$ from $V$ to the powerset $\mathcal{P}(X)$ of $X$ such that $\Lambda(v)$ is a transversal of $v$ for every $v \in V$. The set of products $v_0v_1\cdots v_n$ such that $v_0, \dots, v_n \in V$ and $\im{v_0\cdots v_{i}}\subseteq \Lambda(v_{i+1})$ for all 
$i\in \{0, \ldots, n-1\}$ is denoted by $\mutt{V}{\Lambda}$.  If $u\in I_X$ is non-empty and $v\in X^X$,  then we will say that $v$ is a \emph{minimal transformation extension} of $u$ if
$v|_{\dom{u}}=u$ and $\im{v}=\im{u}$.  Note that if $v$ is a minimal transformation extension of $u$, then
\begin{itemize}
\item $\dom{u}$ is a transversal of $v$;
\item $r(u)=r(v)$, $c(u)=c(v)$, and $d(u)=d(v)$; 
\item $u$ is total $\iff$ $u=v$ $\iff$ $v$ is injective;
\item $\Sigma u \subseteq \Sigma v$ for all $\Sigma\subseteq X$.
\end{itemize}
Let $U\subseteq I_X$.   A \emph{minimal transformation extension of $U$} is a function $\mte \from U\setminus \{\emptyset \} \to X^X$ such that $\mte(u)$ is a minimal transformation extension of $u$ for every non-empty $u\in U$.  We denote the image of $\mte$ by $\mte(U)$ and refer to the map $\Lambda \from \mte(U) \to \mathcal{P}(X)$ defined by $\Lambda(\mte(u)) = \dom{u}$ for all non-empty $u\in U$ as the  \emph{canonical assignment of transversals} of $\mte(U)$. 

\begin{lemma}\label{lem:fromIXtoXX}
Let $X$ be an infinite set, $U \subseteq I_X$,  and $\mte$ a minimal transformation extension of $U$.  If $\Lambda$ is the canonical assignment of transversals for $\mte(U)$,  then every injective element of $\mutt{\mte(U)}{\Lambda}$ is an element of the semigroup generated by $U$. 
\end{lemma}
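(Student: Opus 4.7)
The plan is to fix a decomposition $w = v_0 v_1 \cdots v_n$ with each $v_i = \mte(u_i)$ for some non-empty $u_i \in U$, and to show directly that $w = u_0 u_1 \cdots u_n$ as elements of $P_X$. Since the right-hand side is a product of elements of $U$, this exhibits $w$ as an element of the semigroup generated by $U$.

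The hypothesis that $w$ is injective will be invoked only at the very first stage, and only to handle $v_0$. Indeed, if $v_0(x) = v_0(y)$ for some distinct $x, y \in X$, then $w(x) = w(y)$, contradicting injectivity of $w$; hence $v_0$ is injective. By the listed equivalence between $v_0$ being injective, $u_0$ being total, and $v_0 = u_0$, this already forces $v_0 = u_0$, with $u_0$ total.

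The remainder is a straightforward induction on $k \in \{0, 1, \ldots, n\}$ showing $v_0 v_1 \cdots v_k = u_0 u_1 \cdots u_k$ in $P_X$. For the inductive step I would fix $x \in X$ and, using the inductive hypothesis, set $y = x(v_0 \cdots v_{k-1}) = x(u_0 \cdots u_{k-1})$. The defining condition of $\mutt{\mte(U)}{\Lambda}$ gives $y \in \im{v_0 \cdots v_{k-1}} \subseteq \Lambda(v_k) = \dom{u_k}$, so since $v_k|_{\dom{u_k}} = u_k$ we obtain $y v_k = y u_k$. Thus $x(v_0 \cdots v_k) = y u_k = x(u_0 \cdots u_k)$ and, simultaneously, $x \in \dom{u_0 \cdots u_k}$. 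Letting $x$ range over $X$ shows that $u_0 \cdots u_k$ is total and agrees pointwise with $v_0 \cdots v_k$, so the two coincide as elements of $P_X$. Setting $k = n$ completes the argument.

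There is no genuine obstacle here. The only conceptually crucial step is the opening observation that injectivity of the full product forces $v_0 = u_0$; after that, the induction runs on the definitions of $\mte$ and $\mutt{\mte(U)}{\Lambda}$ alone, with no further appeal to the injectivity of $w$. In particular, all intermediate partial products $u_0 \cdots u_k$ turn out to be total automatically, so no delicate bookkeeping about shrinking domains is required.
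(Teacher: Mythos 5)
Your proposal is correct and follows essentially the same route as the paper: injectivity of the full product is used exactly once to force $\mte(u_0)=u_0$, and then an induction using the containment $\im{v_0\cdots v_{k-1}}\subseteq \Lambda(v_k)=\dom{u_k}$ together with $\mte(u_k)|_{\dom{u_k}}=u_k$ shows the partial products agree. Your pointwise phrasing of the inductive step is just an unwound version of the paper's identity $u_0\cdots u_i\,\mte(u_{i+1})=u_0\cdots u_i\,u_{i+1}$.
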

\begin{proof}
Every $f\in \mutt{\mte(U)}{\Lambda}$ is a product of the form 
$$f=\mte(u_0)\mte(u_1) \cdots \mte(u_n)$$ 
for some $u_0, \dots, u_n \in U$ such that 
\begin{equation}\label{extension_containment}
\im{\mte(u_0)\cdots \mte(u_{i})} \subseteq \dom{u_{i+1}}
\end{equation}
for all $i\in\{0, \dots, n-1\}$. Assume that $f$ is injective.  We will show by induction on $i$ that 
\begin{equation}\label{extension_induction}
\mte(u_0)\cdots \mte(u_{i})=u_0\cdots u_{i}
\end{equation}
for all $i\in \{0, \dots, n\}$. Note that if $a,b \in X^X$ and $ab$ is injective, then $a$ is injective. 
Thus, since $f$ is injective, $\mte(u_0)$ is injective  
and so $\mte(u_0)=u_0$.  So let $0\leq i<n$ and assume that \eqref{extension_induction} holds.  Then $\mte(u_0)\cdots \mte(u_{i})\mte(u_{i+1})=u_0\cdots u_i\mte(u_{i+1})$.  Moreover,   
$\im{u_0\cdots u_i}\subseteq \dom{u_{i+1}}$ by \eqref{extension_containment} and $\mte(u_{i+1})$ agrees with $u_{i+1}$ on $\dom{u_{i+1}}$.  Hence 
$u_0\cdots u_i\mte(u_{i+1})=u_0\cdots u_i u_{i+1}$, as required.
\end{proof}

\section{Main results}

In this section we state and prove the main results of this paper. 

\subsection{The Symmetric Group}

In this section we classify the maximal (inverse) subsemigroups of $I_X$ that contain $\Sym{X}$, when $X$ is a infinite set.

 \begin{theorem} \label{thm:maximal_sym}
Let $X$ be an infinite set.
Then the maximal subsemigroups of $I_X$ which contain $\Sym{X}$ are:
\begin{align*}
S_\mu &= \set{f \in I_X }{ c(f) \geq \mu \text{ or } d(f)<\mu} \\
\inv{S_\mu} &= \set{f \in I_X }{ c(f)<\mu \text{ or } d(f) \geq \mu}
\end{align*}
where $\mu=1$ or $\mu$ is any infinite cardinal with $\mu \leq \card{X}$.  
The maximal inverse subsemigroups of $I_X$ which contain $\Sym{X}$ are 
    \begin{align*}
S_{\mu} \cap \inv{S_{\mu}} = \set{f \in I_X }{\;&(c(f)\geq \mu \text{ and }d(f)\geq \mu) \text{ or }\\ &(c(f)<\mu \text{ and } d(f)<\mu)}
    \end{align*}
over the same values of $\mu$.
\end{theorem}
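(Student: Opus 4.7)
The plan is to apply Theorem \ref{lem:total->all} with $G=\Sym{X}$, letting $Y$ be any moiety of $X$ and setting
\[
\mathcal{M} = \set{S_\mu}{\mu}\cup\set{\inv{S_\mu}}{\mu},
\]
where $\mu$ ranges over $\{1\}$ and the infinite cardinals $\leq|X|$. Hypothesis \eqref{lem:total->all/YSym} holds because every $\sigma\in\Sym{Y}$ extends to a permutation of $X$ which is the identity on $X\setminus Y$. Once the five conditions of Theorem \ref{lem:total->all} are verified, the classification of maximal subsemigroups follows, and the classification of maximal inverse subsemigroups follows from the final clause of that theorem together with a direct verification that the sets $S_\mu\cap\inv{S_\mu}$ are pairwise incomparable.

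Conditions \eqref{lem:total->all/G_subset}--\eqref{lem:total->all/M_inverses} are routine. For \eqref{lem:total->all/G_subset}, every $\sigma\in\Sym{X}$ has $c(\sigma)=d(\sigma)=0$, so lies in every $S_\mu$ and $\inv{S_\mu}$. For \eqref{lem:total->all/M_subsemigroup}, closure of $S_\mu$ under composition follows by a case split on $f,g\in S_\mu$ using Lemma \ref{lem:defect_properties}: if $c(f)\geq\mu$ apply part \eqref{lem:defect_properties/collapse_composition}; if $d(f)<\mu$ and $c(g)\geq\mu$ apply part \eqref{lem:defect_properties/defect<collapse}; and if $d(f),d(g)<\mu$ apply part \eqref{lem:defect_properties/defect_composition} together with $\mu+\mu=\mu$ for infinite $\mu$ (and the observation that $\mu=1$ forces $d(f)=d(g)=0$). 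Properness of $S_\mu$ holds since any total $f$ with $d(f)=\mu$ (which exists as $\mu\leq|X|$) satisfies $c(f)=0<\mu$ and $d(f)\not<\mu$. The corresponding statements for $\inv{S_\mu}$ follow by inverting. For \eqref{lem:total->all/M_not_contained}, pairwise non-containment among the distinct sets of $\mathcal{M}$ is witnessed by elements of $I_X$ with prescribed collapse and defect (for instance, for $\mu<\nu$, an element with $c=\mu,\,d=\nu$ lies in $S_\mu\setminus S_\nu$), all of which exist since $\mu,\nu\leq|X|$. Condition \eqref{lem:total->all/M_inverses} holds by construction of $\mathcal{M}$.

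The main obstacle is condition \eqref{lem:total->all/in-out-condition}. Given a subsemigroup $U$ of $I_X$ containing $\Sym{X}$ with $U\not\subseteq M$ for every $M\in\mathcal{M}$, I must produce a total $f\in U$ with $\im{f}\subseteq Y$. The plan follows the three-step strategy of Section \ref{fromIXtoXX}: fix a minimal transformation extension $\mte$ of $U$ with $\mte(\sigma)=\sigma$ for $\sigma\in\Sym{X}$, so that $\Sym{X}\subseteq\mte(U)\subseteq X^X$. Because $\mte$ preserves rank, collapse, and defect, the hypothesis that $U$ escapes every $S_\mu$ and $\inv{S_\mu}$ translates into $\mte(U)$ escaping each of the corresponding analogues in $X^X$; these analogues exhaust the maximal subsemigroups of $X^X$ that contain $\Sym{X}$, as classified in \cite{gavrilov1965functional, pinsker2005maximal}. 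I can then invoke the analogue of condition \eqref{lem:total->all/in-out-condition} proved for $X^X$ in \cite{maximal_east_mitchell_peresse} to obtain an injective element $f\in\mutt{\mte(U)}{\Lambda}$ with $\im{f}\subseteq Y$, where $\Lambda$ is the canonical assignment of transversals. By Lemma \ref{lem:fromIXtoXX}, $f$ lies in the subsemigroup of $I_X$ generated by $U$, which is $U$ itself; and being injective, $f$ is a total element of $I_X$ with the required image.

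For the maximal inverse subsemigroups, the final clause of Theorem \ref{lem:total->all} reduces the problem to identifying the maximal elements of $\set{M\cap\inv{M}}{M\in\mathcal{M}}$. Since inversion is involutive and swaps $S_\mu$ with $\inv{S_\mu}$, this set equals $\set{S_\mu\cap\inv{S_\mu}}{\mu}$. An elementary case analysis shows $S_\mu\cap\inv{S_\mu}$ equals the set in the statement, and witnesses such as an element with $c=0,\,d=\mu$ (which for $\mu<\nu$ lies in $S_\nu\cap\inv{S_\nu}$ but not $S_\mu\cap\inv{S_\mu}$) establish pairwise incomparability of the $S_\mu\cap\inv{S_\mu}$, so each is a maximal element.
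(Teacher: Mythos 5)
Your overall architecture is the paper's: apply Theorem \ref{lem:total->all} with $G=\Sym{X}$, verify (i)--(iv) by cardinal arithmetic with Lemma \ref{lem:defect_properties}, transfer to $X^X$ via minimal transformation extensions for (v), and finish the inverse case with the final clause of Theorem \ref{lem:total->all} plus an incomparability check. Conditions (i)--(iv) and the concluding anti-chain argument are fine. The gap is in your treatment of condition \eqref{lem:total->all/in-out-condition}, which you reduce to a black box that does not exist in the form you cite. First, the claim that $\trans{S_{\mu}}$ and the $X^X$-analogues of $\inv{S_{\mu}}$ ``exhaust the maximal subsemigroups of $X^X$ that contain $\Sym{X}$'' is false: the classification in \cite{gavrilov1965functional, pinsker2005maximal, maximal_east_mitchell_peresse} contains further families (the paper's own remark that $\trans{S_{\mu}}$ is the set denoted $S_2$, respectively $S_4(\mu)$, in \cite{maximal_east_mitchell_peresse} already signals the existence of $S_1$, $S_3(\mu)$, $S_5(\mu)$; the $S_3(\mu)$ family corresponds in $I_X$ to $S_{\mu}\cap\inv{S_{\mu}}$, which is maximal only as an \emph{inverse} subsemigroup). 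Second, and more seriously, even if you knew that $\mte(U)$ escapes every maximal subsemigroup of $X^X$ containing $\Sym{X}$, the classification theorems for $X^X$ would only give you that $\mte(U)$ \emph{generates} $X^X$, i.e.\ that some unconstrained product of elements of $\mte(U)$ is an injective map with image in $Y$. Lemma \ref{lem:fromIXtoXX} can only pull back injective elements of $\mutt{\mte(U)}{\Lambda}$, that is, products respecting the transversal condition, and no result of \cite{maximal_east_mitchell_peresse} produces such an element with image contained in a prescribed moiety $Y$.

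The result that does live inside $\mutt{\cdot}{\cdot}$ is Lemma \ref{sym_inj}, whose conclusion is an injective map of defect at least $\kappa$ with image inside $\im{f_0}$ --- not inside $Y$. The repair (and the paper's actual route) is: apply Lemma \ref{lem:constructing_f} (the $I_X$-translation of Lemma \ref{sym_inj}, which only needs $U\nsubseteq S_{\mu}$ for the stated $\mu$, so your hypotheses suffice) with $f_0$ the identity and $\kappa=|X|$ to obtain a total $f_{|X|}\in U$ with $d(f_{|X|})=|X|$; since $\im{f_{|X|}}$ and $Y$ are both moieties of $X$, post-compose with some $a\in\Sym{X}\subseteq U$ carrying $\im{f_{|X|}}$ onto $Y$. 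With that substitution for your condition (v) argument, the rest of your proof goes through.
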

An alternative, and arguably more natural way, to write $S_1, \inv{S_1}$, and $S_1\cap \inv{S_1}$ from Theorem \ref{thm:maximal_sym} is 
\begin{align*}
S_1&=\Sym{X} \cup \set{f \in I_X }{ c(f)>0};\\
\inv{S_1}&=\Sym{X} \cup \set{f \in I_X }{ d(f)>0};\\
S_1\cap \inv{S_1}&=\Sym{X} \cup \set{f \in I_X }{ c(f)>0 \text{ and } d(f)>0}.
\end{align*}

The key step in proving Theorem \ref{thm:maximal_sym} is Lemma \ref{lem:constructing_f} below. Lemma \ref{sym_inj} was proved in \cite{maximal_east_mitchell_peresse}, is the analogue of Lemma \ref{lem:constructing_f} for $X^X$, and involves the following $X^X$-analogue of $S_{\mu}$:
\begin{equation}\label{trans_smu_definition}
\trans{S_{\mu}}=\set{f\in X^X }{ c(f) \geq \mu \text{ or } d(f)<\mu}.
\end{equation}
In \cite{maximal_east_mitchell_peresse},  $\trans{S_\mu}$ is denoted by $S_2$ in the case when $\mu=1$ and by $S_4(\mu)$ when $\aleph_0\leq \mu\leq |X|$.

\begin{lemma}[Lemma 6.3 in \cite{maximal_east_mitchell_peresse}] \label{sym_inj}
Let $X$ be an infinite set and let $U$ be a subset of $X^X$, which is not contained in 
$\trans{S_{\mu}}$ (as defined in \eqref{trans_smu_definition})
for $\mu=1$ or any infinite $\mu\leq |X|$,  let $\Lambda$ be any assignment of transversals for $U$,  let $f_0 \in U$ be injective, and let $\kappa$ be an infinite  cardinal such that $\kappa\leq |X|$. If $U$ 
 contains every  $a\in \Sym{X}$ with $\supp{a}\subseteq \im{f_0}$ and $\card{\supp{a}} < \kappa$, then there exists an injective 
 $f_{\kappa}\in \mutt{U}{\Lambda}$ such that $d(f_{\kappa})\geq \kappa$ and $\im{f_{\kappa}}\subseteq \im{f_0}$. 
\end{lemma}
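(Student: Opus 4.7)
The strategy is to exploit the hypothesis $U \not\subseteq \trans{S_\kappa}$ to extract a suitable element $u \in U$ with large defect and small collapse, and then combine it with $f_0$ and permutations from $\Sym{X}$ to produce the desired $f_\kappa$ as a short product of the form $p \cdot u \cdot f_0$ for a carefully chosen injective prefix $p$.

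First, by the hypothesis, I pick $u \in U$ with $c(u) < \kappa$ and $d(u) \geq \kappa$. Set $T := \Lambda(u)$, so that $|X \setminus T| = c(u) < \kappa$. The plan is to construct an injective product $p \in \mutt{U}{\Lambda}$ with $\im{p} \subseteq T$, and then to define $f_\kappa := p \cdot u \cdot f_0$.

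Given such a $p$, the claimed properties of $f_\kappa$ follow routinely. Namely: $f_\kappa \in \mutt{U}{\Lambda}$, since $\im{p} \subseteq T = \Lambda(u)$ and $\im{pu} \subseteq X = \Lambda(f_0)$; $f_\kappa$ is injective, since $p$, $u|_T$, and $f_0$ are all injective; $\im{f_\kappa} \subseteq \im{f_0}$, as $f_0$ is the last factor; and using that $f_0$ is injective, $X \setminus \im{f_\kappa} \supseteq (X \setminus \im{f_0}) \cup f_0(X \setminus \im{u})$ (a disjoint union), so $d(f_\kappa) \geq d(f_0) + d(u) \geq \kappa$.

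The crux of the proof is the construction of the injective prefix $p$ with $\im{p} \subseteq T$. If $\im{f_0} \subseteq T$, we can simply take $p = f_0$. For the general case, my plan is to proceed by transfinite induction on $\kappa$, using the hypothesis $U \not\subseteq \trans{S_\mu}$ for smaller $\mu < \kappa$ (in particular the injective non-surjective element from $\mu = 1$) together with the small-support permutations in $U$ to iteratively eliminate the fewer-than-$\kappa$ ``bad'' points of $\im{f_0} \setminus T$, while preserving injectivity and the image-in-$\im{f_0}$ property.

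The main difficulty will lie in handling the transversal constraint in this general case. The support restriction $\supp{\sigma} \subseteq \im{f_0}$ means that the available permutations can only permute $\im{f_0}$ internally, so post-composing $f_0$ with such a $\sigma$ leaves $\im{f_0 \sigma} = \im{f_0}$ unchanged and cannot by itself move points out of $X \setminus T$. The construction of $p$ must therefore also incorporate the additional injective non-surjective elements of $U$ whose images genuinely shrink a prefix's image, combined with the small-support permutations to navigate among admissible image sets inside $\im{f_0}$; the inductive/limit bookkeeping needed to ensure that after finitely many such steps the image has been trimmed into $T$ is the most delicate part of the argument.
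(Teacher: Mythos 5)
This lemma is not proved in the paper at all --- it is quoted verbatim from \cite{maximal_east_mitchell_peresse} (Lemma 6.3) --- so there is no in-paper argument to compare against; judged on its own terms, your proposal has a genuine gap. The final assembly is sound: given an injective $p \in \mutt{U}{\Lambda}$ with $\im{p} \subseteq \Lambda(u)$ for some $u \in U$ with $c(u) < \kappa \leq d(u)$, the product $f_{\kappa} = puf_0$ is indeed an injective element of $\mutt{U}{\Lambda}$ with $\im{f_{\kappa}} \subseteq \im{f_0}$ and $d(f_{\kappa}) \geq d(u) \geq \kappa$ (and $\Lambda(f_0) = X$ holds automatically, since the only transversal of a total injective map is $X$). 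But constructing the prefix $p$ is the entire mathematical content of the lemma, and your proposal stops at a plan: you explicitly defer the ``inductive/limit bookkeeping'' that you call ``the most delicate part of the argument''.

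Concretely, three things are missing. First, as you yourself observe, a permutation $a$ with $\supp{a} \subseteq \im{f_0}$ fixes $X \setminus \im{f_0}$ pointwise, so it can neither shrink the image of a prefix nor pull an image that has escaped $\im{f_0}$ back inside it; the standard resolution --- re-appending $f_0$ after each non-permutation factor, which is always $\Lambda$-legal and forces the image back into $\im{f_0}$ while preserving defect because $f_0$ is injective --- does not appear in your sketch. Second, to move $\im{q} \setminus \Lambda(u)$ into $\Lambda(u)$ by a permutation supported in $\im{f_0}$ one needs at least $|\im{q} \setminus \Lambda(u)|$ (which can be any cardinal up to $c(u) < \kappa$) free points of $(\im{f_0} \cap \Lambda(u)) \setminus \im{q}$; the inductive hypothesis $d(q) \geq \mu$ does not supply these when $d(f_0)$ is large, since the defect of $q$ may be concentrated outside $\im{f_0}$. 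The induction therefore has to carry the stronger invariant that $|\im{f_0} \setminus \im{f_{\mu}}| \geq \mu$, not merely $d(f_{\mu}) \geq \mu$. Third, the base of the induction --- producing arbitrarily large finite ``internal'' defect by interleaving the injective non-surjective element of $U \setminus \trans{S_1}$ with $f_0$ --- is not carried out. As it stands the proposal is a correct reduction followed by an unproven claim that is essentially equivalent to the lemma itself.
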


\begin{lemma}\label{lem:constructing_f}
Let $U$ be a subsemigroup of $I_X$ which, for $\mu=1$ and all infinite $\mu\leq |X|$, is not contained in $S_{\mu}$ (as defined in Theorem \ref{thm:maximal_sym}), let $f_0\in U$ be total (i.e. $c(f_0)=0$), and let $\kappa$ be an infinite cardinal with $\kappa \leq |X|$. If $U$ contains every $a \in \Sym{X}$ with $\supp{a} \subseteq \im{f_0}$ and $\supp{a}< \kappa$, then $U$ contains a total element $f_{\kappa}$ with $d(f_{\kappa})\geq \kappa$ and $\im{f_{\kappa}}\subseteq \im{f_0}$.
\end{lemma}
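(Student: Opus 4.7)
The plan is to execute the three-step strategy spelled out in Section \ref{fromIXtoXX}: pass from partial bijections to total functions via a minimal transformation extension, apply the corresponding result for $X^X$ (namely Lemma \ref{sym_inj}), and pull the resulting injective map back into $U$ using Lemma \ref{lem:fromIXtoXX}. First I would fix an arbitrary minimal transformation extension $\mte\from U\setminus\{\emptyset\}\to X^X$; such a map exists because each non-empty $u\in U$ can be extended off $\dom{u}$ by sending every element of $X\setminus\dom{u}$ to any chosen point of the non-empty set $\im{u}$. I would then let $\Lambda$ denote the canonical assignment of transversals on $\mte(U)$.

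Next I would verify that $\mte(U)$, $\Lambda$, $\mte(f_0)$, and $\kappa$ satisfy the hypotheses of Lemma \ref{sym_inj}. Using the bulleted observations $c(\mte(u))=c(u)$ and $d(\mte(u))=d(u)$ from Section \ref{fromIXtoXX}, the assumption that $U\not\subseteq S_\mu$ transfers directly to $\mte(U)\not\subseteq \trans{S_\mu}$ for $\mu=1$ and each infinite $\mu\leq |X|$. Since $f_0$ is total, the same bullets give $\mte(f_0)=f_0$, so $\mte(f_0)$ is injective as an element of $X^X$. Finally, every $a\in\Sym{X}$ with $\supp{a}\subseteq\im{f_0}$ and $|\supp{a}|<\kappa$ lies in $U$ by hypothesis and is itself total and injective, so $\mte(a)=a\in\mte(U)$.

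With the hypotheses confirmed, Lemma \ref{sym_inj} produces an injective $f_\kappa\in\mutt{\mte(U)}{\Lambda}$ with $d(f_\kappa)\geq\kappa$ and $\im{f_\kappa}\subseteq\im{\mte(f_0)}=\im{f_0}$. Lemma \ref{lem:fromIXtoXX} then places $f_\kappa$ in the subsemigroup generated by $U$, which equals $U$ itself. Viewed back in $I_X$, an injective function $X\to X$ has full domain and is therefore total, so $f_\kappa$ is the desired total element with $d(f_\kappa)\geq\kappa$ and $\im{f_\kappa}\subseteq\im{f_0}$. The only real work is the verification of the hypotheses of Lemma \ref{sym_inj}, but each item reduces to a one-line appeal to the bulleted properties of minimal transformation extensions; the substantive combinatorial difficulty has effectively been outsourced to the analogous $X^X$-statement from \cite{maximal_east_mitchell_peresse}.
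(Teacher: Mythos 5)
Your proposal is correct and follows essentially the same route as the paper's own proof: pass to a minimal transformation extension, check that collapse and defect are preserved so that non-containment in the $S_\mu$ transfers to the $\trans{S_\mu}$, apply Lemma \ref{sym_inj}, and pull the injective element back into $U$ via Lemma \ref{lem:fromIXtoXX}. The hypothesis verifications you list (including $\mte(f_0)=f_0$ and $U\cap\Sym{X}=\mte(U)\cap\Sym{X}$) are exactly those in the paper.
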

\begin{proof}
Let $U, f$, and $\kappa$ satisfy the assumptions of the lemma.  Let $\mte$ be a minimal transformation extension of $U$ (as defined in Section \ref{fromIXtoXX}).  We will now show that $\mte(U)$ satisfies the conditions on Lemma \ref{sym_inj} where $\Lambda$ is the canonical assignment of transversals for $\mte(U)$.

If $\mu=1$ or $\aleph_0 \leq \mu\leq |X|$ and
$v\in X^X$ is a minimal transformation extension of some $u\in I_X \setminus \{\emptyset\}$,  then,  since minimal transformation extensions preserve rank and defect, $u\in S_{\mu}$ if and only if $v \in \trans{S_{\mu}}$.  Hence $\mte(U)$ is not contained in any $\trans{S_{\mu}}$ since $U$ is not contained in any $S_{\mu}$.

Furthermore, $U \cap \Sym{X}=\mte(U) \cap \Sym{X}$ and $\mte(f_0)=f_0$. Hence $\mte(U)$ satisfies the conditions of Lemma \ref{sym_inj} and so there exists an injective $f_{\kappa} \in \mutt{\mte(U)}{\Lambda}$ such that $d(f_{\kappa})\geq \kappa$ and $\im{f_{\kappa}}\subseteq \im{f_0}$. By Lemma \ref{lem:fromIXtoXX},  $f_{\kappa}\in U$, as required.  
\end{proof}

Using Theorem \ref{lem:total->all} and Lemma \ref{lem:constructing_f}, it is now straightforward to prove Theorem \ref{thm:maximal_sym}. 

\begin{proof}[Proof of Theorem \ref{thm:maximal_sym}]
Let $Y$ be any moiety of $X$. Then $G=\Sym{X}$ clearly satisfies condition \eqref{lem:total->all/YSym} of Theorem \ref{lem:total->all}.  We will show that the semigroups $S_{\mu}$ and $\inv{S_{\mu}}$ together satisfy the five conditions of Theorem \ref{lem:total->all}.

\vspace{\baselineskip}\noindent
{\bf (i)}
If $f \in \Sym{X}$ and $\mu\geq 1$, then $c(f)=d(f)=0<\mu$. Hence $f$ is contained in every $S_{\mu}$ and $\inv{S_{\mu}}$.

        \vspace{\baselineskip}\noindent
{\bf (ii)} Let $\mu=1$ or $\mu$ be infinite with $\mu\leq |X|$. Clearly, $S_{\mu}$ is a proper subset of $I_X$ since there exist elements $h$ of $I_X$ with $c(h)=0$ and $d(h)=|X|$. 
To show that $S_{\mu}$ is a semigroup, let $f,g \in S_{\mu}$ be arbitrary. If $c(f)\geq \mu$, then $c(fg)\geq \mu$ by  Lemma \ref{lem:defect_properties} part \eqref{lem:defect_properties/defect<collapse} and so $fg \in S_{\mu}$. So we may assume that $c(f) <\mu$ and so $d(f)< \mu$. If $d(g)<\mu$, then $d(fg) \leq d(f) + d(f)$ by Lemma \ref{lem:defect_properties} part \eqref{lem:defect_properties/defect_composition}. Since $\mu=1$ or $\mu$ is infinite, this implies that $d(fg)<\mu$ and so $fg \in S_{\mu}$.

The only remaining case is that $d(f)<\mu$ and $c(g)\geq \mu$. If $\mu$ is infinite, then $c(fg)\geq \mu$ by Lemma \ref{lem:defect_properties} part \eqref{lem:defect_properties/defect<collapse} and so $fg \in S_{\mu}$. Similarly, if $\mu=1$, then $d(f)=0$ and $c(g)\geq \mu=1$ and so \ref{lem:defect_properties} part \eqref{lem:defect_properties/collapse_surjective} implies that $c(fg)=c(f)+c(g)\geq \mu$.        
We have shown that every $S_{\mu}$, and hence every $\inv{S_{\mu}}$, is a proper subsemigroup of $I_X$.
        
\vspace{\baselineskip}\noindent
{\bf (iii)} 
Let $\mu$ and $\nu$ be cardinals that are either $1$ or infinite and at most $|X|$. Then $S_{\mu}\setminus \inv{S_{\nu}}$ contains an element $f$ of $I_X$ satisfying $c(f)=|X|$ and $d(f)=0$. Hence $S_{\mu} \nsubseteq \inv{S_{\nu}}$. 
Now suppose that $\nu \neq \mu$.  If $\nu<\mu$, then let $h_{\mu}\in I_X$ satisfy $c(h_{\mu})=0$ and $d(h_{\mu})=\nu$. On the other hand, if $\mu<\nu$, then let $h_{\mu}$ satisfy $c(h_{\mu})=\mu$ and $d(h_{\mu})=|X|$. Then, in either case, $h_{\mu} \in S_{\mu} \setminus T_{\nu}$ and so $S_{\mu} \nsubseteq S_{\nu}$.        
Since $S_{\mu}$ is not contained in any of the other semigroups, neither is $\inv{S_{\mu}}$.

\vspace{\baselineskip}\noindent
{\bf (iv)} Recalling that $c(h^{-1})=d(h)$ for elements $h$ of $I_X$, it is easy to that $\inv{S_{\mu}}$ is indeed the inverse of $S_{\mu}$.

        \vspace{\baselineskip}\noindent
{\bf (v)} Let $U$ be a subsemigroup of $I_X$ containing $\Sym{X}$, but which is itself not contained in any of the $S_{\mu}$. Applying Lemma \ref{lem:constructing_f} to $U$ where $f_0$ is the identity function and $\kappa=|X|$, we conclude that $U$ contains a total element $f_{|X|}$ with $d(f_{|X|})=|X|$. Since $Y$ and $\im{f_{|X|}}$ are moieties of $X$, there exists $a\in \Sym{X} \subseteq U$ such that $\left( \im{f_{|X|}}\right)a=Y$. Then $f=f_{|X|}a$ is total and $\im{f}\subseteq Y$, as required. 

\vspace{\baselineskip}
Since all five conditions are satisfied,  it follows from Theorem \ref{lem:total->all} that the set of maximal subsemigroups of $I_X$ containing  $\Sym{X}$ is
$$\set{S_{\mu}, \inv{S_{\mu}}}{ \mu=1 \text{ or }\mu \text{ is an infinite cardinal with }\mu\leq |X|}$$
and that the maximal inverse subsemigroups of $I_X$ containing $\Sym{X}$ are precisely the maximal elements of the set
$$\set{S_{\mu}\cap \inv{S_{\mu}}}{ \mu=1 \text{ or }\mu \text{ is an infinite cardinal with }\mu\leq |X|}.$$
To show that the latter set is an anti-chain under containment,  let $\mu<\nu$ be distinct cardinals that are either $1$ or infinite and at most $|X|$. If $g$ and $h$ are any elements of $I_X$ satisfying $c(g)=\mu$, $d(g)=\nu$, $c(h)=0$, and $d(h)=\mu$, then 
$g \in (S_{\mu}\cap \inv{S_{\mu}}) \setminus (S_{\nu} \cap \inv{S_{\nu}})$ 
and $h \in (S_{\nu}\cap \inv{S_{\nu}})\setminus (S_{\mu}\cap \inv{S_{\mu}})$.
\end{proof}

\subsection{Pointwise stabilisers of finite sets}

Recall that the \emph{setwise stabiliser} $\setStab{\Sym{X}}{\Sigma}$ and the \emph{pointwise stabiliser} $\pointStab{\Sym{X}}{\Sigma}$ of a subset $\Sigma$ of $X$ are defined by
\begin{align*}
    \setStab{\Sym{X}}{\Sigma} &= \set{f \in \Sym{X} }{ \Sigma f = \Sigma}\\
     \pointStab{\Sym{X}}{\Sigma}&=\set{f \in \Sym{X} }{ (\forall x \in \Sigma) (xf=x) }.
\end{align*}
In this section we classify the maximal subsemigroups and maximal inverse subsemigroups of $I_X$ which contain the pointwise stabiliser of a finite non-empty subset $\Sigma$ of $X$. It turns out that each such maximal (inverse) subsemigroup contains the setwise stabiliser of some subset $\Gamma$ of $\Sigma$.
Recall that $\fin=\set{f \in I_X}{r(f)<|X|}$.

\begin{theorem} \label{thm:maximal_pointwise}
Let $X$ be an infinite set and $\Sigma$ a non-empty finite subset of $X$.
Then the maximal subsemigroups of $I_X$ which contain the pointwise stabiliser $\pointStab{\Sym{X}}{\Sigma}$ but not $\Sym{X}$ are
\begin{align*}
P_{\Gamma,\mu} =\set{f \in I_X }{\;&c(f) \geq \mu \text{ or } \Gamma \nsubseteq \dom{f} \text{ or } \\
&(\Gamma f = \Gamma \text{ and } d(f) < \mu)} \cup \fin \\
\inv{P_{\Gamma,\mu}} =\set{f \in I_X }{\;&d(f) \geq \mu \text{ or } \Gamma \nsubseteq \im{f} \text{ or } \\
&(\Gamma f = \Gamma \text{ and } c(f) < \mu)} \cup \fin
\end{align*}
where $\Gamma$ is a non-empty subset of $\Sigma$ and $\mu$ is a infinite cardinal with $\mu \leq \card{X}^+$.
The maximal inverse subsemigroups of $I_X$ containing $\pointStab{\Sym{X}}{\Sigma}$ but not $\Sym{X}$ are
\begin{align*}
 P_{{\Gamma}, \mu} \cap \inv{P_{\Gamma, \mu}}
= \lbrace f \in I_X :\;
&(\Gamma \nsubseteq \dom{f} \text{ and } \Gamma \nsubseteq \im{f})    \text{ or } \\
&(\Gamma \nsubseteq \dom{f}   \text{ and } d(f)\geq \mu)     \text{ or } \\
&( c(f)\geq \mu \text{ and } \Gamma \nsubseteq \im{f})     \text{ or } \\
&(c(f)\geq \mu \text{ and }d(f)\geq \mu)    \text{ or } \\
&(\Gamma f = \Gamma \text{ and } c(f)+d(f) < \mu)\rbrace \cup \fin
\end{align*}
over the same values of $\Gamma$ and $\mu$.
\end{theorem}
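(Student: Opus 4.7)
The plan is to apply Theorem \ref{lem:total->all} with $G = \pointStab{\Sym{X}}{\Sigma}$, the moiety $Y = X \setminus \Sigma$, and $\mathcal{M}$ the collection consisting of the $S_{\mu}$, $\inv{S_{\mu}}$ from Theorem \ref{thm:maximal_sym} together with the $P_{\Gamma,\mu}$, $\inv{P_{\Gamma,\mu}}$ in the statement. Since $\Sigma$ is finite and $X$ is infinite, $Y$ is a moiety, and every permutation of $Y$ extends to a permutation of $X$ fixing $\Sigma$ pointwise, so condition \eqref{lem:total->all/YSym} holds for $G$. Once Theorem \ref{lem:total->all} is established for this $\mathcal{M}$, the maximal subsemigroups of $I_X$ containing $G$ but not $\Sym{X}$ are exactly the $P_{\Gamma,\mu}$ and $\inv{P_{\Gamma,\mu}}$, since every $S_{\mu}$ and $\inv{S_{\mu}}$ contains $\Sym{X}$. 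The second half of the theorem will then follow from the final clause of Theorem \ref{lem:total->all} by computing $P_{\Gamma,\mu} \cap \inv{P_{\Gamma,\mu}}$, verifying that together with the $S_{\mu} \cap \inv{S_{\mu}}$ these form an anti-chain under containment, and noting that the $P \cap \inv{P}$-type members are precisely those not containing $\Sym{X}$.

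Conditions \eqref{lem:total->all/G_subset}--\eqref{lem:total->all/M_inverses} of Theorem \ref{lem:total->all} are routine. Any $f \in \pointStab{\Sym{X}}{\Sigma}$ satisfies $c(f) = d(f) = 0$ and $\Gamma f = \Gamma$ for every $\Gamma \subseteq \Sigma$, which gives \eqref{lem:total->all/G_subset}. For \eqref{lem:total->all/M_subsemigroup}, closure of each $P_{\Gamma,\mu}$ under composition is a case analysis mirroring the one for $S_{\mu}$ in the proof of Theorem \ref{thm:maximal_sym}, using Lemma \ref{lem:defect_properties}; properness is witnessed by any $f \in I_X$ with $r(f) = |X|$, $c(f) = 0$, $d(f) \geq \mu$, and $\Gamma f \neq \Gamma$. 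Condition \eqref{lem:total->all/M_not_contained} is verified by exhibiting witnesses separating distinct members of $\mathcal{M}$ via their $\Gamma$-action, rank, collapse, and defect. Condition \eqref{lem:total->all/M_inverses} is immediate from the symmetric definitions, using $c(h^{-1}) = d(h)$.

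The substantive obstacle is condition \eqref{lem:total->all/in-out-condition}: given a subsemigroup $U$ of $I_X$ containing $G$ but not contained in any member of $\mathcal{M}$, we must produce a total $f \in U$ with $\im{f} \subseteq Y$. Following the strategy of Section \ref{fromIXtoXX}, take a minimal transformation extension $\mte$ of $U$ and let $\Lambda$ be the canonical assignment of transversals for $\mte(U)$. Because minimal transformation extensions preserve rank, collapse, and defect, and because $\Sigma u \subseteq \Sigma \mte(u)$ for every $\Sigma \subseteq X$, the hypothesis that $U$ avoids every $S_{\mu}$, $\inv{S_{\mu}}$, $P_{\Gamma,\mu}$, and $\inv{P_{\Gamma,\mu}}$ transfers to the statement that $\mte(U)$ is not contained in the corresponding $X^X$-analogues from the classification of maximal subsemigroups of $X^X$ containing $\pointStab{\Sym{X}}{\Sigma}$ in \cite{maximal_east_mitchell_peresse}. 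The key lemma underlying that $X^X$-classification, which is the analogue of Lemma \ref{sym_inj} for the pointwise-stabiliser case, then yields an injective element of $\mutt{\mte(U)}{\Lambda}$ that is total on $X$ with image in $Y$. By Lemma \ref{lem:fromIXtoXX}, this element lies in $U$, as required. The delicate part of this step is carefully aligning each defining clause of the $I_X$-semigroups $P_{\Gamma,\mu}$ with its $X^X$-counterpart in \cite{maximal_east_mitchell_peresse} so that the exact hypotheses of that lemma are in force.
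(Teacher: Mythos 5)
Your overall architecture matches the paper's: apply Theorem \ref{lem:total->all} to the family consisting of the $S_{\mu}$, $\inv{S_{\mu}}$, $P_{\Gamma,\mu}$, $\inv{P_{\Gamma,\mu}}$, then discard the members containing $\Sym{X}$ and pass to intersections for the inverse case. But there are two concrete problems. First, $Y=X\setminus\Sigma$ is \emph{not} a moiety of $X$: its complement is the finite set $\Sigma$, whereas Theorem \ref{lem:total->all} requires $|X\setminus Y|=|X|$ (this is used to show $\im{f^2}$ is a moiety of $Y$ and to extend partial permutations of $Y$ to full ones). You need $Y$ to be a moiety of $X$ disjoint from $\Sigma$, which exists since $\Sigma$ is finite; condition \eqref{lem:total->all/YSym} then still holds for $G=\pointStab{\Sym{X}}{\Sigma}$.

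Second, and more substantively, your treatment of condition \eqref{lem:total->all/in-out-condition} does not go through as written. The $X^X$ lemma you invoke (Lemma \ref{finite_stab_inj}, the corrected form of \cite[Lemma 7.2]{maximal_east_mitchell_peresse}) does not deliver an injective map with image in $Y$: given an injective $f$ with $\im{f}\cap\Sigma=\Gamma\neq\emptyset$, it only produces an injective $f'$ with $\im{f'}\cap\Sigma\subsetneq\Gamma$. One must iterate this finitely often to get a total $f\in U$ with $\im{f}\cap\Sigma=\emptyset$ (this is Lemma \ref{finite_stab_total}), then separately apply Lemma \ref{lem:constructing_f} with $f_0=f$ and $\kappa=|X|$ to raise the defect to $|X|$ while keeping the image inside $X\setminus\Sigma$, and only then can an element of $\pointStab{\Sym{X}}{\Sigma}$ carry the image into the moiety $Y$; without defect $|X|$ the image cannot be moved into $Y$ at all. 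Moreover, the hypothesis of Lemma \ref{finite_stab_inj} that $\Gamma\subseteq\Lambda(u)$ for every $u\notin\trans{P_{\Gamma,\mu}}$ is not automatic for the canonical assignment of transversals on all of $\mte(U)$: an element $u$ with $\Gamma\not\subseteq\dom{u}$ has $\Lambda(\mte(u))=\dom{u}\not\supseteq\Gamma$, yet $\mte(u)$ need not lie in $\trans{P_{\Gamma,\mu}}$. The paper resolves this by first restricting to $U'=\set{u\in U}{\Gamma\subseteq\dom{u}}$ and checking that the discarded elements are contained in $S_1$ and in every $P_{\Gamma,\mu}$, so that $U'$ still violates all the relevant containments. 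Your closing remark that the "delicate part" is aligning the clauses identifies where the difficulty lies but does not supply the argument, and this alignment is precisely where the proof could fail if done naively.
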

Note that when $\mu\leq |X|$, then the ``$\cup\,\fin$'' could be omitted from the definition of the semigroups in Theorem \ref{thm:maximal_pointwise} since every finite element of $I_X$ has collapse at least $\mu$. We can also simplify the definitions in the case $\mu=|X|^+$ as follows:
\begin{align*}
P_{\Gamma, |X|^+}=\set{f \in I_X }{&\;\Gamma \not \subseteq \dom{f} \text{ or } \Gamma f=\Gamma} \cup \fin.\\
\inv{P_{\Gamma, |X|^+}}=\set{f \in I_X }{&\;\Gamma \not \subseteq \im{f} \text{ or } \Gamma f=\Gamma} \cup \fin.\\
 P_{{\Gamma}, |X|^+} \cap \inv{P_{\Gamma, |X|^+}}= \set{f \in I_X}
 {&\;(\Gamma \nsubseteq \dom{f} \text{ and } \Gamma \nsubseteq \im{f})    \text{ or } \\
&\;(\Gamma f = \Gamma)} \cup \fin.
\end{align*}

To prove Theorem \ref{thm:maximal_pointwise}, we will once again make use of results about analogous maximal subsemigroups of $X^X$ from \cite{maximal_east_mitchell_peresse}. 
The following are the $X^X$-analogues of $\fin$ and $P_{\Gamma, \mu}$.

\begin{nalign}\label{trans_stab_definition}
\trans{\fin}= \set{f \in X^X}{&\;r(f)<|X|}\\
\trans{P_{\Gamma, \mu}}= \set{f \in X^X }{ &\;c(f) \geq \mu \text{ or } |\Gamma f| < |\Gamma| \text{ or }\\ 
&(\Gamma f = \Gamma \text{ and } d(f) < \mu)} \cup \trans{\fin}
\end{nalign} 
The result about $\trans{P_{\Gamma, \mu}}$ relevant to us is \cite[Lemma 7.2]{maximal_east_mitchell_peresse} (where $\trans{P_{\Gamma, \mu}}$ is denoted by $F_2(\Gamma,  \mu)$). However, there is the following technical problem with the way that \cite[Lemma 7.2]{maximal_east_mitchell_peresse} is stated. The lemma demands that a certain subset $U$ of $X^X$ be given an assignment of transversals $\Lambda$ such that $\Gamma \subseteq \Lambda(u)$ for all $u \in U \setminus \trans{P_{\Gamma, \mu}}$. Such a $\Lambda$ certainly exists for any fixed $\Gamma$ but there may not be a single $\Lambda$ satisfying this property for every non-empty subset $\Gamma$ of $\Sigma$. The proof of Lemma 7.2 given in \cite{maximal_east_mitchell_peresse} does prove a slightly different statement which is sufficient for the intended applications here and in \cite{maximal_east_mitchell_peresse}. It is this latter statement that we now give; together with an apology by the second author for the part that they played as an author of \cite{maximal_east_mitchell_peresse} in creating this confusion in the first place. 

\begin{lemma}[implied by the proof of Lemma 7.2 in \cite{maximal_east_mitchell_peresse}]\label{finite_stab_inj}
Let $X$ be an infinite set, $\Sigma$ a finite subset of $X$, and $U$ a subset of $X^X$ which contains $\Sym{X}_{(\Sigma)}$ and is not contained in $\trans{S_1}$ (as defined in \eqref{trans_smu_definition}). If $f \in U$ is injective such that $\im{f}\cap \Sigma = \Gamma \neq \emptyset$, $U$ is not contained in $\trans{P_{\Gamma, \mu}}$ (as defined in \eqref{trans_stab_definition})
for any infinite cardinal $\mu \leq |X|^+$, and $\Lambda$ is any assignment of transversals for $U$ such that $\Gamma \subseteq \Lambda(u)$ for all $u \in U \setminus \trans{P_{\Gamma, \mu}}$, then there exists an injective $f'\in U$ such that $\im{f'}\cap \Sigma \subsetneq \Gamma$.
\end{lemma}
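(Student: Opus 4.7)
The plan is to build $f'$ by ``sandwiching'' a short word between two copies of $f$: I would take $f' = f \sigma f$ or $f' = f \sigma u f$, where $\sigma \in \Sym{X}_{(\Sigma)} \subseteq U$ is carefully chosen and $u \in U$ is extracted from the non-containment hypothesis. Ending the composition with $f$ is the crucial device, since it will force $\im{f'} \subseteq \im{f}$ and hence $\im{f'} \cap \Sigma \subseteq \im{f} \cap \Sigma = \Gamma$ automatically; the remaining task will be to choose $\sigma$ so that $\im{f'}$ misses some specific $\gamma^* \in \Gamma$. I would write $A = \im{f} \setminus \Sigma$ and split on whether $\Gamma f = \Gamma$.

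In the first case, when $\Gamma f \neq \Gamma$, since $|\Gamma f| = |\Gamma|$ the set $\Gamma \setminus \Gamma f$ is non-empty; I would fix some $\gamma^* \in \Gamma \setminus \Gamma f$, so its $f$-preimage $f^{-1}(\gamma^*)$ lies in $X \setminus \Gamma$. Next I would choose $\sigma \in \Sym{X}_{(\Sigma)}$ so that $f^{-1}(\gamma^*) \notin A\sigma$: this is automatic if $f^{-1}(\gamma^*) \in \Sigma$ (since $A \sigma \subseteq X \setminus \Sigma$), and otherwise requires only avoiding a single element of $X \setminus \Sigma$. Then $f' = f \sigma f$ lies in $\mutt{U}{\Lambda}$, is injective, and satisfies $\gamma^* \notin \im{f'}$, because $\gamma^* \in \im{f \sigma f}$ would require $f^{-1}(\gamma^*) \in \im{f \sigma} = \Gamma \cup A \sigma$, and by construction it lies in neither.

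In the second case, when $\Gamma f = \Gamma$, I would invoke the non-containment hypothesis at $\mu = |X|^+$ to extract $u \in U$ with $r(u) = |X|$, $|\Gamma u| = |\Gamma|$, and $\Gamma u \neq \Gamma$; the $\Lambda$-hypothesis then yields $\Gamma \subseteq \Lambda(u)$. I would pick $\gamma' \in \Gamma \setminus \Gamma u$ and set $\gamma^* = f(\gamma')$, which lies in $\Gamma$ because $\Gamma f = \Gamma$. Since $u|_{\Lambda(u)}$ is injective and $\Sigma$ is finite, the set $\Lambda(u) \setminus \Sigma \setminus (u^{-1}(\gamma') \cap \Lambda(u))$ still has cardinality $|X|$, so one can pick $\sigma \in \Sym{X}_{(\Sigma)}$ with $A \sigma$ contained in it. Then $f' = f \sigma u f$ lies in $\mutt{U}{\Lambda}$ and is injective, and $\gamma^* \notin \im{f'}$ because $\gamma^* \in \im{f'}$ would force $\gamma' = f^{-1}(\gamma^*)$ into $\im{f \sigma u} = \Gamma u \cup (A \sigma) u$, contradicting both $\gamma' \notin \Gamma u$ and the choice of $\sigma$.

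The main obstacle, which the ``$\cdot f$'' sandwich trick is designed to sidestep, is that the naive composition $f \sigma u$ can leak into $\Sigma \setminus \Gamma$ whenever $u$ maps some element of $\Gamma$ there, ruining $\im{f'} \cap \Sigma \subseteq \Gamma$. Once one appends a final $f$, the image is re-confined inside $\im{f}$ and the only remaining combinatorial difficulty, of choosing $\sigma$ to avoid finitely many forbidden elements in a set of size $|X|$, becomes routine. The case distinction on $\Gamma f$ simply reflects whether $f$'s own action on $\Gamma$ already provides the required movement (Case I) or whether an auxiliary $u$ supplied by the hypothesis is needed (Case II).
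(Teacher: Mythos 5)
Your sandwich architecture is sensible --- ending every word with a final copy of $f$ does force $\im{f'}\subseteq\im{f}$ and hence $\im{f'}\cap\Sigma\subseteq\Gamma$, and your injectivity and transversal bookkeeping for membership in $\mutt{U}{\Lambda}$ is fine. The genuine gap is in the step you dismiss as routine: the existence of $\sigma\in\Sym{X}_{(\Sigma)}$. Any such $\sigma$ restricts to a bijection of $X\setminus\Sigma$, so $(X\setminus\Sigma)\setminus(A\sigma)=\bigl((X\setminus\Sigma)\setminus A\bigr)\sigma$ has cardinality exactly $|(X\setminus\Sigma)\setminus A|$; consequently $A\sigma$ can be forced into a prescribed $B\subseteq X\setminus\Sigma$ only when $|(X\setminus\Sigma)\setminus B|\leq|(X\setminus\Sigma)\setminus A|$. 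In Case I this fails whenever $X\setminus\im{f}\subseteq\Sigma$: for instance if $f\in\Sym{X}$ with $\Sigma f\neq\Sigma$ (a legitimate instance with $\Gamma=\Sigma$), then $A=X\setminus\Sigma$, every $\gamma^*\in\Gamma\setminus\Gamma f$ has $f^{-1}(\gamma^*)\notin\Sigma$, and $A\sigma=X\setminus\Sigma$ contains $f^{-1}(\gamma^*)$ for every $\sigma$; indeed any product of copies of $f$ with permutations is again a permutation, whose image meets $\Sigma$ in all of $\Gamma$, so no construction of the form $f\sigma f$ can work. In Case II the element $u$ extracted at $\mu=|X|^+$ is only guaranteed to have $c(u)\leq|X|$, so $\Lambda(u)\setminus\Sigma$ may have complement of size $|X|$ in $X\setminus\Sigma$ while $A$ is cofinite there (again $f\in\Sym{X}$, now with $\Sigma f=\Sigma$); then no $\sigma\in\Sym{X}_{(\Sigma)}$ maps $A$ into $\Lambda(u)\setminus\Sigma$ at all.

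This is exactly why the lemma carries the hypotheses that $U\nsubseteq\trans{S_1}$ and that $U\nsubseteq\trans{P_{\Gamma,\mu}}$ for \emph{every} infinite $\mu\leq|X|^+$ rather than just $\mu=|X|^+$ --- neither of which your argument uses. The missing idea is a preliminary defect-boosting step: one must first build, from the injective non-surjective witness to $U\nsubseteq\trans{S_1}$ (and witnesses at smaller $\mu$ as needed), an injective element of $\mutt{U}{\Lambda}$ whose image still meets $\Sigma$ inside $\Gamma$ but which misses at least $c(u)$ points of $X\setminus\Sigma$, so that its image genuinely can be pushed into $\Lambda(u)$ by an element of $\Sym{X}_{(\Sigma)}$; only then can the $\Gamma$-moving element $u$ be inserted. (For comparison: the paper does not reprove this lemma but quotes it from the proof of Lemma 7.2 of the cited reference, so there is no internal proof to measure against; the gap above is independent of that.)
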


\begin{lemma}\label{finite_stab_total}
Let $X$ be an infinite set, $\Sigma$ a finite subset of $X$,  and $U$ a subset of $I_X$ containing $\Sym{X}_{(\Sigma)}$ but which is not 
contained in $S_1$ (as defined in Theorem \ref{thm:maximal_sym}) or $P_{\Gamma, \mu}$ (as defined in Theorem \ref{thm:maximal_pointwise}) for any non-empty subset $\Gamma$ of $\Sigma$ and 
any infinite cardinal $\mu \leq |X|^+$.  Then there exists a total $f$ in the semigroup generated by $U$ such that $\im{f} \cap \Sigma=\emptyset$.
\end{lemma}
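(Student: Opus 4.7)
My plan follows the three-step framework of Section~\ref{fromIXtoXX}: pass to $X^X$ via a minimal transformation extension, apply Lemma~\ref{finite_stab_inj} iteratively to strip elements of $\Sigma$ from the image, and lift back to $I_X$ via Lemma~\ref{lem:fromIXtoXX}. Concretely, I would let $W$ be the subsemigroup of $I_X$ generated by $U$, fix a minimal transformation extension $\mte$ of $W$, and set $V = \mte(W) \subseteq X^X$ with canonical assignment of transversals $\Lambda$. Since permutations equal their own minimal extensions, $\Sym{X}_{(\Sigma)} \subseteq V$, and because $\mte$ preserves rank, collapse and defect, $U \not\subseteq S_1$ gives $V \not\subseteq \trans{S_1}$. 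For each non-empty $\Gamma \subseteq \Sigma$ and infinite $\mu \leq |X|^+$, any witness $u \in U \setminus P_{\Gamma, \mu}$ must satisfy $r(u)=|X|$, $c(u)<\mu$, $\Gamma \subseteq \dom{u}$ and either $\Gamma u \neq \Gamma$ or $d(u) \geq \mu$; then $\mte(u)$ is injective on $\Gamma$ with $\Gamma\,\mte(u) = \Gamma u$, and the rank/collapse/defect identities give $\mte(u) \notin \trans{P_{\Gamma, \mu}}$, hence $V \not\subseteq \trans{P_{\Gamma, \mu}}$.

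Starting from $f_0 = \mathrm{id}_X \in V$, for which $\Gamma_0 := \im{f_0} \cap \Sigma = \Sigma$, I would inductively produce injective $f_1, f_2, \ldots \in V$ with $\Gamma_i := \im{f_i} \cap \Sigma$ strictly decreasing. At step $i$, the observations above supply the hypotheses of Lemma~\ref{finite_stab_inj} with $\Gamma = \Gamma_i$ and $f = f_i$, yielding an injective $f_{i+1} \in \mutt{V}{\Lambda}$ with $\Gamma_{i+1} \subsetneq \Gamma_i$. Lemma~\ref{lem:fromIXtoXX} then gives $f_{i+1} \in W$, and because $f_{i+1}$ is injective as an element of $X^X$ it is in fact a total element of $I_X$; in particular $\mte(f_{i+1}) = f_{i+1} \in V$, so the iteration continues. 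Finiteness of $\Sigma$ forces termination after at most $|\Sigma|$ steps at some total $f_n \in W$ with $\im{f_n} \cap \Sigma = \emptyset$, which is what the lemma demands.

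The main obstacle I expect is the technical bookkeeping around the transversal condition of Lemma~\ref{finite_stab_inj}: the canonical assignment $\Lambda(\mte(w)) = \dom{w}$ automatically satisfies $\Gamma \subseteq \Lambda(v)$ whenever $v \in V \setminus \trans{P_{\Gamma, \mu}}$ arises from some $w \in W$ with $\Gamma \subseteq \dom{w}$, but a priori some $w$ with $\Gamma \not\subseteq \dom{w}$ could still have $\mte(w)$ accidentally injective on $\Gamma$, placing $\mte(w)$ outside $\trans{P_{\Gamma, \mu}}$ while $\dom{w}$ fails to contain $\Gamma$. The resolution is to tailor $\mte$ so that for each $w$ it maps $\Sigma \setminus \dom{w}$ into $\im{w}$ with deliberate collisions, forcing $\mte(w)$ to be non-injective on every $\Gamma \subseteq \Sigma$ with $\Gamma \not\subseteq \dom{w}$ and hence into $\trans{P_{\Gamma, \mu}}$; this renders the canonical $\Lambda$ compatible with the iteration for every $\Gamma_i$ encountered.
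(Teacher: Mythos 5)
Your overall architecture --- iterate Lemma~\ref{finite_stab_inj} to strictly shrink $\im{f}\cap\Sigma$, lifting back to $I_X$ with Lemma~\ref{lem:fromIXtoXX} at each step --- is exactly the paper's, and you correctly identify the one genuine difficulty: making the transversal hypothesis of Lemma~\ref{finite_stab_inj} compatible with the canonical assignment $\Lambda(\mte(w))=\dom{w}$. But your proposed resolution does not work. You cannot ``tailor $\mte$ so that $\mte(w)$ is non-injective on every $\Gamma\subseteq\Sigma$ with $\Gamma\not\subseteq\dom{w}$'': if $\Gamma=\{x\}$ is a singleton with $x\notin\dom{w}$, no map is non-injective on $\Gamma$, yet such a $w$ can perfectly well have $c(w)<\mu$, $r(w)=|X|$, and $x\,\mte(w)\neq x$, so that $\mte(w)\notin\trans{P_{\{x\},\mu}}$ while $\{x\}\not\subseteq\Lambda(\mte(w))$; and the iteration can certainly reach singleton $\Gamma_i$. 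Even for larger $\Gamma$ the collision requirements conflict: if $\Sigma\setminus\dom{w}=\{x\}$ and $y_1,y_2\in\Sigma\cap\dom{w}$, collapsing $\{x,y_1\}$ and collapsing $\{x,y_2\}$ would force $x\,\mte(w)=y_1w$ and $x\,\mte(w)=y_2w$ simultaneously. The other escape route --- keeping $\mte$ but choosing a non-canonical assignment $\Lambda$ whose transversals contain $\Gamma$ (which does exist for each fixed $\Gamma$) --- is also blocked, because Lemma~\ref{lem:fromIXtoXX} is stated only for the canonical assignment: its induction uses $\im{u_0\cdots u_i}\subseteq\Lambda(\mte(u_{i+1}))=\dom{u_{i+1}}$ to replace $\mte(u_{i+1})$ by $u_{i+1}$, and with a different transversal the partial products may land where $\mte(u_{i+1})$ and $u_{i+1}$ disagree, so the lift to $I_X$ fails.

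The paper's fix is simpler and you should adopt it: at the step where $\Gamma=\im{f}\cap\Sigma$ is being shrunk, discard the offending elements entirely and work with $U'=\set{u\in U}{\Gamma\subseteq\dom{u}}$. Nothing is lost, since every $u$ with $\Gamma\not\subseteq\dom{u}$ is non-total (hence lies in $S_1$) and lies in $P_{\Gamma,\mu}$ for every $\mu$ by definition; so $U'$ still contains $f$ and $\Sym{X}_{(\Sigma)}$ and is still not contained in $S_1$ or any $P_{\Gamma,\mu}$. For $U'$ the canonical assignment satisfies $\Gamma\subseteq\dom{u}=\Lambda(\mte(u))$ for \emph{all} $u\in U'$, so the hypothesis of Lemma~\ref{finite_stab_inj} holds trivially and Lemma~\ref{lem:fromIXtoXX} applies as stated. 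This means a fresh $U'$, $\mte$, and $\Lambda$ at each step of the iteration, which is harmless because each step only has to produce one new total element.
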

\begin{proof}
Let $\Sigma$ and $U$ satisfy the conditions of the lemma. Since $\Sigma$ is finite and the identity function on $X$ lies in $U$, it suffices to show that for every total $f\in U$ with $\im{f}\cap \Sigma\neq \emptyset$ , there exists a total $f'\in U$ such that $\im{f'} \cap \Sigma \subsetneq \im{f}\cap \Sigma$. So let $f\in U$ be total with $\im{f} \cap \Sigma = \Gamma \neq \emptyset$.

We will show that the required $f'$ may be found in the semigroup generated by $U'=\set{u \in U}{\Gamma \subseteq \dom{u}}$. Note that $\set{u\in U}{\Gamma \not \subseteq \dom{u}}$ contains no total elements and is contained in $S_1$ and $P_{\Gamma, \mu}$ for every $\mu$. Thus $U'$ still contains $f$ and $\Sym{X}_{(\Sigma)}$ and is not contained in $S_1$ or $P_{\Gamma, \mu}$ for any infinite $\mu\leq |X|^+$.

Let $\mte$ be a minimal transformation extension of $U'$ as defined in Section \ref{fromIXtoXX} and let $\Lambda$ be the canonical assignment of transversals for $\mte(U')$. 
We will show that $\mte(U')$ satisfies the conditions of Lemma \ref{finite_stab_inj}. 
Since $U$ contains $f$ and $\Sym{X}_{(\Sigma)}$ and $\mte(u)=u$ for every total element of $U$, it follows that $\mte(U)$ contains $f$ and $\Sym{X}_{(\Sigma)}$. 
Moreover, minimal transformation extensions preserve rank and defect and so $u \in S_1 \iff \mte(u) \in \trans{S_1}$. Similarly, $u \in P_{\Gamma, \mu} \iff \mte(u) \in \trans{P_{\Gamma, \mu}}$ since, by construction, $\Gamma \subseteq \dom{u}=\Lambda(\mte(u))$ for every $u\in U'$ and so $|\Gamma \mte(u)|=|\Gamma|$ and $\Gamma u=\Gamma \mte(u)$ for every $u\in U'$.
 
 Since $U'$ is not contained in any $P_{\Gamma, \mu}$, it follows that $\mte(U')$ is not contained in any $\trans{P_{\Gamma, \mu}}$. Hence, by Lemma \ref{finite_stab_inj},  there exists an injective $f'\in \mutt{\mte(U')}{\Lambda}$ such that $\im{f}\cap \Sigma\subsetneq \Gamma$ and,  by Lemma \ref{lem:fromIXtoXX},  $f'$ lies in the semigroup generated by $U'$, as required.  
\end{proof}

\begin{proof}[Proof of Theorem \ref{thm:maximal_pointwise}.]
If $Y$ is any moiety of $X$ which is disjoint from $\Sigma$, then $G=\pointStab{\Sym{X}}{\Sigma}$ satisfies condition \eqref{lem:total->all/YSym} of Theorem \ref{lem:total->all}. We will now show that the semigroups $P_{\Gamma, \mu}$ and $\inv{P_{\Gamma, \mu}}$ together with the maximal semigroups  containing $\Sym{X}$ (see Theorem \ref{thm:maximal_sym}) satisfy the five conditions of Theorem \ref{lem:total->all}.

\vspace{\baselineskip}\noindent
{\bf (i)} Let $\emptyset \neq \Gamma \subseteq \Sigma$ and $\mu \leq |X|^+$ be an infinite cardinal. If $f \in \pointStab{\Sym{X}}{\Sigma}$, then $\Gamma f=\Gamma$ and $c(f)=d(f)=0<\mu$. Thus $f\in P_{\Gamma, \mu} \cap \inv{P_{\Gamma, \mu}}$.
        
\vspace{\baselineskip}\noindent
{\bf(ii)}
It suffices to show that every $P_{\Gamma,\mu}$ is a proper subsemigroup of $I_X$.
Fix a non-empty finite subset $\Gamma$ of $X$ and an infinite cardinal $\mu \leq |X|^+$. 
        If $f\in \Sym{X}$ does not set-wise stabilise $\Gamma$, then $f \not \in P_{\Gamma,\mu}$. In particular, $P_{\Gamma, \mu}$ is a proper subset of $I_X$. It remains to show that $st \in P_{\Gamma, \mu}$ for arbitrary $s,t \in P_{\Gamma, \mu}$. Since $\fin$ is an ideal of $I_X$ we may assume that 
        $s,t \in  \set{f \in I_X }{ c(f) \geq \mu \text{ or } \Gamma \nsubseteq \dom{f} \text{ or } (\Gamma f = \Gamma \text{ and } d(f) < \mu)}$.
If $c(s)\geq \mu$, then $c(st) \geq c(s)\geq \mu$ by Lemma \ref{lem:defect_properties} part \eqref{lem:defect_properties/collapse_composition}. Similarly, if $\Gamma \not \subseteq \dom{s}$, then $\Gamma \not \subseteq \dom{st}$. So assume that $\Gamma s=\Gamma$ and $d(s)<\mu$. If $c(t) \geq \mu$,  then $c(st)\geq \mu$ by Lemma \ref{lem:defect_properties} part \eqref{lem:defect_properties/defect<collapse} and if $\Gamma \not \subseteq \dom{t}$, then $\Gamma \not \subseteq \dom{st}$. Finally, if $\Gamma t=\Gamma$ and $d(t)<\mu$, then $\Gamma st=\Gamma t=\Gamma$ and $d(st)\leq d(s)+d(t)<\mu+\mu=\mu$ by Lemma \ref{lem:defect_properties} part \eqref{lem:defect_properties/defect_composition}.
        
\vspace{\baselineskip}\noindent
{\bf(iii)} 
We fix a non-empty subset $\Gamma$ of $\Sigma$ and an infinite $\mu\leq |X|^+$ and show that $P_{\Gamma, \mu}$ is not contained in any of the other semigroups from this theorem or Theorem \ref{thm:maximal_sym}. It will then follow that the same is true for $\inv{P_{\Gamma, \mu}}$. In some instances we will actually show that $P_{\Gamma, \mu} \cap \inv{P_{\Gamma, \mu}}$ is not contained in certain semigroups as this will be needed later in this proof when we consider maximal inverse subsemigroups.

If $f_1\in I_X$ satisfies $\Gamma f_1=\Gamma$, $d(f_1)=1$, and $c(f_1)=0$, then $f_1 \in \left(P_{\Gamma, \mu} \cap \inv{P_{\Gamma, \mu}}\right) \setminus S_1$.
If $f_2\in I_X$ has domain $X \setminus \Sigma$ and satisfies $d(f_2)=|X|$,  then $f_2\in\left(P_{\Gamma, \mu} \cap \inv{P_{\Gamma, \mu}}\right)\setminus S_{\nu}$ for every infinite $\nu\leq |X|$.  
If $f_3\in I_X$ satisfies $c(f_3)=|X|$, $\Gamma \not \subseteq \dom{f_3}$, and $d(f_3)=0$, then $f_3 \in P_{\Gamma, \mu}$ but $f_3 \not \in \inv{S_{\nu}}$ for $\nu=1$ or any infinite $\nu\leq |X|$.
Fix another infinite $\nu \leq |X|^+$ and non-empty subset $\Delta$ of $\Sigma$.  
If $f_4\in I_X$ has domain $X \setminus \Sigma$ and image $X$, then $f_4\in P_{\Gamma, \mu}$ but $f_4\not \in \inv{P_{\Delta, \nu}}$.  

Finally, we will find $g_{\Delta, \nu}\in \left(P_{\Gamma, \mu} \cap \inv{P_{\Gamma, \mu}}\right) \setminus P_{\Delta, \nu}$ for every non-empty $\Delta \subseteq \Sigma$ and infinite $\nu \leq |X|^+$ such that $(\Delta, \nu) \neq (\Gamma, \mu)$.  If $\Gamma \not \subseteq \Delta$, then let $g_{\Delta, \nu}$ be any element of $I_X$ with domain $X \setminus (\Gamma \setminus \Delta)$ and image $X \setminus \Sigma$.  If $\Delta \not \subseteq \Gamma$, then let $g_{\Delta, \nu}$ be an element of $\Sym{X}$ which stabilises $\Gamma$ but not $\Delta$.  If $\mu<\nu$, then let $g_{\Delta, \nu}$ satisfy $c(g_{\Delta, \nu})=\mu$, $\Sigma \subseteq \dom{g_{\Delta, \nu}}$ and $\im{g_{\Delta, \nu}} \cap \Sigma = \emptyset$.  If $\nu<\mu$, then let $g_{\Delta, \nu}$ be total (i.e. $c(g_{\Delta, \nu})=0$), fix $\Sigma$ pointwise and satisfy $d(g_{\Delta, \nu})=\nu$.

\vspace{\baselineskip}\noindent
{\bf(iv)} 
$\inv{P_{\Gamma,  \mu}}$ is indeed the inverse of $P_{\Gamma, \mu}$, since, for all $f\in I_X$, $c(f) = d(\inv{f})$, $\im{f} = \dom{\inv{f}}$, and if $\Gamma$ is finite,  then $\Gamma f = \Gamma \iff \Gamma \inv{f} = \Gamma$.

\vspace{\baselineskip}\noindent
{\bf(v)}  Let $U$ be a subsemigroup of $I_X$ containing $\pointStab{\Sym{X}}{\Sigma}$, but which is itself not contained in $S_{\nu}$ for $\nu=1$ or any infinite $\nu\leq |X|$ nor contained in $P_{\Gamma, \mu}$ for any infinite $\mu\leq |X|^+$ and any non-empty $\Gamma \subseteq \Sigma$.
By Lemma \ref{finite_stab_total},  there exists a total $f\in U$ with $\im{f}\cap \Sigma=\emptyset$.
Applying Lemma \ref{lem:constructing_f} to $U$ with $f_0=f$ and $\kappa=|X|$,  we obtain a total $f_{|X|} \in U$ with $d(f_{|X|})=|X|$ and $\im{f_{|X|}} \subseteq \im{f} \subseteq X \setminus \Sigma$. If $a \in \pointStab{\Sym{X}}{\Sigma} \subseteq U$ takes $\im{f_{|X|}}$ to $Y$, then $f=f_{|X|}a$ is the required total element with $\im{f}=Y$.

\vspace{\baselineskip} It now follows from Theorem \ref{lem:total->all} that the maximal subsemigroups of $I_X$ containing 
$\pointStab{\Sym{X}}{\Sigma}$ are $P_{\Gamma, \mu}$, $\inv{P_{\Gamma, \mu}}$, $S_{\lambda}$,  and $\inv{S_{\lambda}}$ over all non-empty $\Gamma \subseteq \Sigma$,  infinite $\mu\leq |X|^+$,  and $\lambda=1$ or $\aleph_0\leq \lambda \leq |X|$.  Thus,  the maximal subsemigroups of $I_X$ containing 
$\pointStab{\Sym{X}}{\Sigma}$ but not $\Sym{X}$ are the semigroups $P_{\Gamma, \mu}$ and $\inv{P_{\Gamma, \mu}}$,  as required.  

Moreover,  the maximal inverse subsemigroups of $I_X$ containing $\pointStab{\Sym{X}}{\Sigma}$ are the maximal elements of the corresponding set of intersections $P_{\Gamma, \mu}\cap \inv{P_{\Gamma, \mu}}$ and $S_{\lambda}\cap \inv{S_{\lambda}}$.  It only remains to fix a non-empty finite subset $\Gamma$ of $X$ and an infinite cardinal $\mu\leq |X|^+$ and show that $P_{\Gamma, \mu}\cap \inv{P_{\Gamma, \mu}}$ is maximal in this set of intersections. But we have already done this! In the proof of condition (iii) above, we found $f_1, f_2, g_{\Delta, \nu} \in P_{\Gamma, \mu}\cap \inv{P_{\Gamma, \mu}}$ such that $f_1\not \in S_1 \supseteq S_1 \cap \inv{S_1}$, $f_2 \not \in S_{\nu} \supseteq S_{\nu}\cap \inv{S_{\nu}}$ for every infinite $\nu \leq |X|^+$, and $g_{\Delta, \nu} \not \in P_{\Delta, \nu} \supseteq P_{\Delta, \nu} \cap \inv{P_{\Delta, \nu}}$ for every $(\Delta, \nu) \neq (\Gamma, \mu)$.
\end{proof}

We finish this section with an application of Theorems \ref{thm:maximal_sym} and \ref{thm:maximal_pointwise}.
Recall that an element $e$ of a semigroup $S$ is an \emph{idempotent} if $e^2=e$. The idempotents of $I_X$ are precisely the partial identities $\set{(y,y)}{y\in Y}$ of subsets $Y$ of $X$.

\begin{corollary}
    Let $X$ be an infinite set. The intersection of all maximal subsemigroups of $I_X$ coincides with the intersection of all maximal inverse subsemigroups of $I_X$ and equals $\fin \cup E_X$, where
    $$\fin=\set{f\in I_X}{r(f)<|X|} \text{ and } E_X = \set{e \in I_X}{e^2 = e}.$$
   
\end{corollary}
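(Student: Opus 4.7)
The plan is to prove $\bigcap_M M = \fin \cup E_X$ for $M$ ranging over maximal subsemigroups of $I_X$; the same argument yields the analogous equality for maximal inverse subsemigroups (using the ``moreover'' clause of Theorem~\ref{thm:maximal_sym} and Lemma~\ref{lem:inverse_intersection} where needed), and the coincidence of the two intersections then follows since both equal $\fin \cup E_X$.

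For $\bigcap_M M \subseteq \fin \cup E_X$, given $f \notin \fin \cup E_X$ we have $r(f) = |X|$ and $f$ is not a partial identity, so some $x \in \dom{f}$ satisfies $xf \neq x$. Applying Theorem~\ref{thm:maximal_pointwise} with $\Sigma = \Gamma = \{x\}$ and $\mu = |X|^+$ makes $P_{\{x\},|X|^+}$ a maximal subsemigroup, and every disjunct in its definition fails at $f$ (since $c(f) \leq |X| < |X|^+$, $x \in \dom{f}$, $\{x\}f \neq \{x\}$, and $r(f) = |X|$), so $f \notin P_{\{x\},|X|^+}$; the same check for the maximal inverse subsemigroup $P_{\{x\},|X|^+} \cap \inv{P_{\{x\},|X|^+}}$ handles the inverse case.

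For $\fin \cup E_X \subseteq M$ for every maximal $M$, the easy parts are: $\mathrm{id}_X \in M$ (since $M \cup \{\mathrm{id}_X\}$ is a subsemigroup and $I_X \setminus \{\mathrm{id}_X\}$ is not, as $ff^{-1} = \mathrm{id}_X$ for any $f \in \Sym{X}$); and $\fin \subseteq M$ (since $\fin$ is an ideal by Lemma~\ref{lem:defect_properties}(i), so $M \cup \fin$ is a subsemigroup, and maximality forces either $\fin \subseteq M$ directly or $\Sym{X} \subseteq M$---the latter placing $M$ among $\{S_\mu, \inv{S_\mu}\}$ by Theorem~\ref{thm:maximal_sym}, all of which contain $\fin$ since $r(f) < |X|$ yields $c(f) = d(f) = |X|$).

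The main step is $E_X \subseteq M$: rank-$<|X|$ idempotents lie in $\fin \subseteq M$, so it remains to consider $e = \mathrm{id}_Y$ with $|Y| = |X|$ and $Y \neq X$. Suppose $e \notin M$; then $\langle M, e \rangle = I_X$, so every $g \in \Sym{X}$ admits a representation $g = w_1 e w_2 e \cdots e w_{k+1}$ with $w_j \in M$ (obtained by absorbing consecutive $e$-factors via $e^2 = e$, merging consecutive $M$-factors, and noting the product can neither start nor end with $e$ since $g$ is total and surjective while $Y \subsetneq X$). Since $g$ is total, each intermediate image $\im{w_1 w_2 \cdots w_j}$ for $j \leq k$ must lie in $Y$, or else the subsequent $e$-restriction would undefine $g$ at some $x \in X$; hence each $e$ in the product acts as the identity on the current image and is redundant, yielding $g = w_1 w_2 \cdots w_{k+1} \in M$. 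Thus $\Sym{X} \subseteq M$, and Theorem~\ref{thm:maximal_sym} identifies $M$ with some $S_\mu$ or $\inv{S_\mu}$; since $c(e) = d(e) = |X \setminus Y|$, $e$ lies in every such semigroup, contradicting $e \notin M$. This ``collapsing'' argument for the $e$-factors is the main technical step, and once phrased correctly poses no real obstacle.
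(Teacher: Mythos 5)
Your proposal is correct and follows essentially the same route as the paper: the reverse inclusion uses the same witness $P_{\{x\},|X|^+}$ (and its intersection with its inverse) from Theorem~\ref{thm:maximal_pointwise}, and the forward inclusion rests on the same key observation that inserting a partial identity into a product whose value is total leaves the product unchanged, so a missing idempotent would force $\Sym{X}\subseteq M$. The only difference is organisational --- you then invoke Theorem~\ref{thm:maximal_sym} to conclude $e\in M$, whereas the paper fixes a permutation outside $M$ and derives a direct contradiction --- but the substance is identical.
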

\begin{proof}

    Let $M$ be a maximal subsemigroup or maximal inverse subsemigroup of $I_X$. We start by showing that $\fin \cup E_X \subseteq M$. 
    Note that the maximal subsemigroups and maximal inverse subsemigroups of $I_X$ which contain $\Sym{X}$, as classified in Theorem \ref{thm:maximal_sym}, do contain $\fin \cup E_X$. So we may assume that there exists a permutation $f \in \Sym{X} \setminus M$. Aiming for a contradiction, assume that $\fin \cup E_X \not \subseteq M$ and let $g \in \left( \fin \cup E_X\right) \setminus M$. Then $f$ may be written as a product of elements of $M$ together with $g$ (and  $g^{-1}$ in the case that $M$ is a maximal inverse subsemigroup). Since $\fin$ is an ideal of $I_X$ and $f \not \in \fin \cup M$, it follows that $g \in E_X$ and so $g=g^{-1}$. Furthermore, it is easy to see that the identity on $X$ is an element of $M$ and so we may write 
    $$f=m_0gm_1\cdots m_{n-1}gm_n$$
    for some elements $m_0, \dots, m_n \in M$. Since $g$ is an identity on some subset of $X$, it follows that $m_0gm_1\cdots m_{n-1}gm_n$ is a restriction of $m_0m_1\cdots m_n$. But $f$ is total and so $m_0m_1\cdots m_n=f$, contradicting the fact that $f \not \in M$. Hence $\fin \cup E_X \subseteq M$, as required. 

    It remains to show that for every $h \in I_X \setminus \left( \fin \cup E_X\right)$, there exists a maximal subsemigroup and a maximal inverse subsemigroup of $I_X$ which does not contain $h$. Since $h \not \in E_X$, there exists $(x,y) \in h$ with $x\neq y$. Note that if $\Gamma=\{x\}$, then $h \not \in P_{{\Gamma}, |X|^+}$, which is a maximal subsemigroup of $I_X$ by Theorem \ref{thm:maximal_pointwise}. Moreover, $h$ does not lie in the maximal inverse subsemigroup $\in P_{{\Gamma}, |X|^+} \cap \inv{P_{\Gamma, |X|^+}}$.
\end{proof}

\subsection{Stabilisers of ultrafilters}\label{section:ultrafilters}

In this section we classify the maximal (inverse) subsemigroups of $I_X$ containing the stabiliser of an ultrafilter. Recall that a \emph{filter} on a set $X$ is a non-empty family $\filter$ of subsets of $X$ such that
    \begin{enumerate}[\normalfont(i)]
        \item $\emptyset \notin \filter$;
        \item If $\Sigma \in \filter$ and $\Sigma \subseteq \Gamma \subseteq X$, then $\Gamma \in \filter$;
        \item If $\Sigma, \Gamma \in \filter$, then $\Sigma \cap \Gamma \in \filter$.
    \end{enumerate}
A filter $\filter$ on $X$ is an \emph{ultrafilter} if $\filter$ is maximal with respect to containment in the set of all filters on $X$. Equivalently, $\filter$ is an ultrafilter if and only if for every $\Sigma \subseteq X$ either $\Sigma \in \filter$ or $X \setminus \Sigma \in \filter$. A \emph{principal ultrafilter} is a filter of the form $\set{\Sigma \subseteq X}{x \in \Sigma}$ for some $x \in X$. All other ultrafilters on $X$ are \emph{non-principal}. An ultrafilter on $X$ is \emph{uniform} if every element of the filter has cardinality $|X|$.
The \emph{stabiliser} of a filter $\filter$ on $X$ is the group
$$\setStab{\Sym{X}}{\filter}=\set{f \in \Sym{X}}{(\forall \Sigma \subseteq X)(\Sigma \in \filter \iff \Sigma f \in \filter)}.$$
It was shown in \cite[Theorem 6.4]{subgroups_macpherson_neumann} that if $\filter$ is an ultrafilter, then $\setStab{\Sym{X}}{\filter}$ is a maximal subgroup of $\Sym{X}$ and
\begin{equation}\label{ultrafilter_stabiliser_as_union}
\setStab{\Sym{X}}{\filter}=\bigcup_{\Sigma \in \filter} \Sym{X}_{(\Sigma)}.
\end{equation}

\begin{theorem} \label{thm:maximal_ultrafilter}
Let $X$ be an infinite set, $\filter$ a non-principal ultrafilter on $X$,  and $\min({\filter})$ the least cardinality of an element of $\filter$.
Then the maximal subsemigroups of $I_X$ which contain $\setStab{\Sym{X}}{\filter}$ but not $\Sym{X}$ are
\begin{align*}
    V_{\filter, \mu} =\set{f \in I_X }{&\;c(f)\geq \mu \text{ or } \dom{f} \not \in \filter \text{ or } \\
    &((\forall \Sigma \subseteq X)(\Sigma \in \filter \iff \Sigma f \in \filter) \\
    &\text{ and } d(f)<\mu)} \cup \fin\\
    \inv{V_{\filter, \mu}} = \set{f \in I_X }{&\;d(f)\geq \mu \text{ or }
    \im{f} \not \in \filter  \text{ or } \\
    &((\forall \Sigma \subseteq X)(\Sigma \in \filter \iff \Sigma f \in \filter)\\
    &\text{ and }c(f)<\mu)}\cup \fin
\end{align*}
for any cardinal $\mu$ with $\min(\filter)<\mu \leq |X|^+$.
The maximal inverse subsemigroups of $I_X$ which contain $\setStab{\Sym{X}}{\filter}$ but not $\Sym{X}$ are
\begin{align*}
    V_{\filter, \mu} \cap \inv{V_{\filter, \mu}} = \set{f \in I_X }{ 
    \;&(c(f) \geq \mu \text{ and }d(f) \geq \mu) \text{ or } \\
    &(c(f) \geq \mu \text{ and }\im{f} \not \in \filter) \text{ or } \\
    &(\dom{f} \not \in \filter \text{ and }d(f) \geq \mu) \text{ or } \\
    &(\dom{f} \not \in \filter \text{ and }\im{f} \not \in \filter)  \text{ or } \\ 
    &((\forall \Sigma \subseteq X)(\Sigma f \in \filter \iff \Sigma \in \filter)\\
    & \text{ and } c(f) + d(f)<\mu)} \cup \fin
\end{align*}
over the same values of $\mu$.
\end{theorem}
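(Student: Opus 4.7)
The plan is to apply Theorem \ref{lem:total->all} with $G=\setStab{\Sym{X}}{\filter}$ and to take $\mathcal{M}$ to be the collection of all $V_{\filter,\mu}$ and $\inv{V_{\filter,\mu}}$ (over $\min(\filter)<\mu\leq |X|^+$) together with the semigroups $S_{\lambda},\inv{S_{\lambda}}$ from Theorem \ref{thm:maximal_sym}, so as to capture \emph{all} maximal subsemigroups containing $G$ and then read off those that avoid $\Sym{X}$. The first task is to exhibit a moiety $Y$ so that \eqref{lem:total->all/YSym} holds: partition $X=A\cup B$ into moieties; since $\filter$ is an ultrafilter, exactly one of $A,B$ lies in $\filter$, and setting $Y$ to be the other gives a moiety with $X\setminus Y\in\filter$. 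By \eqref{ultrafilter_stabiliser_as_union}, $\pointStab{\Sym{X}}{X\setminus Y}\subseteq G$, and restriction to $Y$ yields all of $\Sym{Y}$, as required.

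Conditions (i)--(iv) of Theorem \ref{lem:total->all} are verified much as in the proof of Theorem \ref{thm:maximal_pointwise}. For (i), every $f\in G$ is a bijection of $X$ satisfying $\Sigma\in\filter\iff\Sigma f\in\filter$, so trivially $f\in V_{\filter,\mu}\cap\inv{V_{\filter,\mu}}$. For (ii), the subsemigroup property follows by case analysis on $f,g\in V_{\filter,\mu}$ using Lemma \ref{lem:defect_properties} (especially parts \eqref{lem:defect_properties/collapse_composition}, \eqref{lem:defect_properties/defect_composition}, and \eqref{lem:defect_properties/defect<collapse}) together with the facts that $\filter$ is closed under supersets and finite intersections and that elements which stabilise $\filter$ transport filter-membership through preimages; properness is witnessed by a bijection that moves a set in $\filter$ out of $\filter$. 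For (iii), incomparability between distinct $V_{\filter,\mu}$ (and with the $S_{\lambda}$) is checked by exhibiting partial bijections with prescribed collapse, defect, and filter-behaviour on $\dom{f}$ and $\im{f}$, mirroring the element constructions in the proof of Theorem \ref{thm:maximal_pointwise}. Condition (iv) is immediate from $c(f)=d(\inv{f})$, $\im{f}=\dom{\inv{f}}$, and the fact that filter-stabilisation is preserved under taking inverses.

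The main obstacle is condition (v): for any subsemigroup $U$ of $I_X$ containing $G$ and not contained in any $S_{\lambda}$, $\inv{S_{\lambda}}$, $V_{\filter,\mu}$, or $\inv{V_{\filter,\mu}}$, one must produce a total $f\in U$ with $\im{f}\subseteq Y$. Following the template of Lemma \ref{finite_stab_total}, I would first pass to the subsemigroup $U'=\set{u\in U}{\dom{u}\in\filter}$, noting that elements with $\dom{u}\notin\filter$ lie automatically in every $V_{\filter,\mu}$ and contribute no total maps, so $U'$ still contains $G$ and escapes every $V_{\filter,\mu}$ and $S_{\lambda}$. Then I would form a minimal transformation extension $\mte$ of $U'$ with canonical assignment of transversals $\Lambda$ (as in Section \ref{fromIXtoXX}) and invoke the $X^X$-analogue of the ultrafilter step from \cite{maximal_east_mitchell_peresse} to produce an injective element $f_0\in\mutt{\mte(U')}{\Lambda}$ whose image avoids some prescribed filter element; the constraint $\dom{u}\in\filter$ is precisely what ensures that membership of $u$ in $V_{\filter,\mu}$ is equivalent to membership of $\mte(u)$ in the corresponding $X^X$-semigroup, because $\dom{u}=\Lambda(\mte(u))$ guarantees $\Sigma u=\Sigma\mte(u)$ for every $\Sigma\in\filter$. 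Lemma \ref{lem:fromIXtoXX} then places $f_0$ inside $U$, and a further application of Lemma \ref{lem:constructing_f} with $\kappa=|X|$ yields a total element of defect $|X|$ whose image is contained in $\im{f_0}$; a final conjugation by a permutation in $\pointStab{\Sym{X}}{X\setminus Y}\subseteq G$ moves this image into $Y$. Granted condition (v), Theorem \ref{lem:total->all} classifies the maximal subsemigroups, and removing those that contain $\Sym{X}$ (namely the $S_{\lambda}, \inv{S_{\lambda}}$) leaves exactly the $V_{\filter,\mu},\inv{V_{\filter,\mu}}$; the inverse-subsemigroup statement follows by verifying that the intersections $V_{\filter,\mu}\cap \inv{V_{\filter,\mu}}$ form an antichain in the lattice of intersections, using witnesses constructed as in (iii).
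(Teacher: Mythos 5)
Your overall strategy is the same as the paper's: apply Theorem \ref{lem:total->all} with $\mathcal{M}$ consisting of the $V_{\filter,\mu}$, $\inv{V_{\filter,\mu}}$ together with the $S_{\lambda}$, $\inv{S_{\lambda}}$, verify (i)--(iv) by the standard case analyses, and reduce (v) to the $X^X$ result via the subsemigroup $U'=\set{u\in U}{\dom{u}\in\filter}$, a minimal transformation extension, and Lemmas \ref{lem:fromIXtoXX} and \ref{lem:constructing_f}. Almost all of this is sound (one small imprecision: $U'$ need only be shown to escape $S_{\nu}$ for $\nu\leq\min(\filter)$, which is all Lemma \ref{ultra_inj} requires; your claim that it escapes \emph{every} $S_{\lambda}$ is not justified for $\lambda>\min(\filter)$, since the witness in $U\setminus S_{\lambda}$ may have domain outside $\filter$).

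There is, however, a genuine error in the last step of (v). After producing a total $f\in U$ with $d(f)=|X|$ and $\im{f}\subseteq\im{f_0}\not\in\filter$, you propose to move $\im{f}$ into $Y$ by an element of $\pointStab{\Sym{X}}{X\setminus Y}$. Such a permutation fixes every point of $X\setminus Y$, so it cannot map any point of $\im{f}\cap(X\setminus Y)$ into $Y$; and there is no reason for $\im{f}$ to be contained in $Y$ --- Lemma \ref{ultra_inj} gives no control over $\im{f_0}$ beyond $\im{f_0}\not\in\filter$. The repair needs the \emph{full} stabiliser $\setStab{\Sym{X}}{\filter}$ rather than the pointwise stabiliser of $X\setminus Y$: replace $f$ by $f^2$ if necessary so that $\im{f}$ is a moiety of $Y\cup\im{f}$; since $Y\not\in\filter$ and $\im{f}\not\in\filter$, also $Y\cup\im{f}\not\in\filter$, hence $X\setminus(Y\cup\im{f})\in\filter$, and by \eqref{ultrafilter_stabiliser_as_union} any permutation supported on $Y\cup\im{f}$ lies in $\setStab{\Sym{X}}{\filter}\subseteq U$; choose such a $b$ with $\im{fb}\subseteq Y$. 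With this correction your argument goes through and coincides with the paper's proof.
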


In the case when $\mu=|X|^+$ (which is the only case when $\min(\filter)=|X|$, i.e. when the filter $\filter$ is uniform) we may more simply write:
\begin{align*}
    V_{\filter, |X|^+} = \set{f \in I_X }{&\; \dom{f} \not \in \filter \text{ or }\\
    &(\forall \Sigma \subseteq X)(\Sigma \in \filter \iff \Sigma f \in \filter)}\\
    \inv{V_{\filter, |X|^+}} = \set{f \in I_X }{&\; \im{f} \not \in \filter \text{ or }\\
    &(\forall \Sigma \subseteq X)(\Sigma \in \filter \iff \Sigma f \in \filter)}\\
    V_{\filter, |X|^+} \cap \inv{V_{\filter, |X|^+}} = \set{f \in I_X }{&\; (\dom{f} \not \in \filter \text{ and }\im{f} \not \in \filter)  \text{ or } \\ 
    &\;(\forall \Sigma \subseteq X)(\Sigma f \in \filter \iff \Sigma \in \filter)}
\end{align*}

For any value of $\mu$, we can alternatively write the maximal semigroups in Theorem \ref{thm:maximal_ultrafilter} in a way more in line with \cite{maximal_east_mitchell_peresse}:
\begin{nalign} \label{filter_alternative_definition}
    V_{\filter, \mu} =\set{f \in I_X }{&\;c(f)\geq \mu \text{ or } \dom{f} \not \in \filter \text{ or } 
    \\
    &((\forall \Sigma \in \filter)(\Sigma f \in \filter) \text{ and }d(f)<\mu)} \cup \fin  \\
    \inv{V_{\filter, \mu}} = \set{f \in I_X }{&\;d(f)\geq \mu \text{ or }
    \im{f} \not \in \filter  \text{ or } 
    \\
    &((\forall \Sigma \not \in \filter)(\Sigma f \not \in \filter)  \text{ and } c(f)<\mu)}\cup \fin. 
\end{nalign}
To see that these are indeed equivalent definitions, first note that the sets defined in Theorem \ref{thm:maximal_ultrafilter} are clearly subsets of the corresponding sets defined
in \eqref{filter_alternative_definition}. On the other hand, if $f\in I_X$ satisfies $(\forall \Sigma \in \filter)(\Sigma f \in \filter)$ and $\Gamma \not \in \filter$,  then $X \setminus \Gamma \in \filter$ and so $(X \setminus \Gamma) f \in \filter$. Thus $\Gamma f \subseteq X \setminus (X \setminus \Gamma)f \not \in \filter$ and so $f$ satisfies $(\forall \Sigma \subseteq X)(\Sigma \in \filter \iff \Sigma f \in \filter)$. It follows that the definition of $V_{\filter, \mu}$ from \eqref{filter_alternative_definition} agrees with the one in Theorem \ref{thm:maximal_ultrafilter}. To show that the definitions of $\inv{V_{\filter, \mu}}$ agree, let $f \in I_X$ satisfy $(\forall \Sigma \not \in \filter)(\Sigma f \not \in \filter)$. We may assume that $\im{f} \in \filter$, since otherwise $f\in \inv{V_{\filter, \mu}}$ under either definition and we are done. If $\Sigma \in \filter$, then $(X \setminus \Sigma) f \not \in \filter$ and so $\Sigma f=\im{f} \setminus (X \setminus \Sigma) f \in \filter$ since $\filter$ is an ultrafilter. Hence $f$ satisfies $(\forall \Sigma \subseteq X)(\Sigma \in \filter \iff \Sigma f \in \filter)$.

The analogue of $V_{\filter, \mu}$ in the full transformation semigroup $X^X$ is
\begin{nalign}\label{trans_filter_definition}
\trans{V_{\filter, \mu}} =\set{f \in I_X }{&\;c(f)\geq \mu \text{ or }
(\forall \Sigma \in \filter)(c(f|_{\Sigma})>0) \text{ or }\\
 &(d(f)<\mu \text{ and }(\forall \Sigma \in \filter)(\Sigma f \in \filter))} \cup \trans{\fin} 
\end{nalign}

In \cite{maximal_east_mitchell_peresse} $\trans{V_{\filter, \mu}}$ is denoted $U_2(\Gamma, \mu)$ and the following result was proved.

\begin{lemma}[Lemma 8.1 in \cite{maximal_east_mitchell_peresse}]\label{ultra_inj}
Let $\filter$ be a non-principal ultrafilter on an infinite set $X$ and let $\min(\filter)$ be the least cardinality of an element of $\filter$.  Let $U$ be a subset of $X^X$ containing the stabiliser $\Sym{X}_{\{\mathcal{F}\}}$ of $\mathcal{F}$ but which is not 
contained in $\trans{V_{\mathcal{F}, \mu}}$ (as defined in \eqref{trans_filter_definition}), $\trans{S_1}$, or $\trans{S_{\nu}}$ (as defined in \eqref{trans_smu_definition}) for any cardinals $\mu,\nu$ such that 
$\aleph_0\leq\nu\leq \min(\mathcal{F})< \mu\leq |X|^+$, and let $\Lambda$ be any 
assignment of transversals for $U$ such that $\Lambda(u)\in \mathcal{F}$ for all $u\in U\setminus \trans{V_{\mathcal{F}, \mu}}$. Then there exists an 
injective $f\in \mutt{U}{\Lambda}$ such that $\im{f} \not\in \mathcal{F}$. 
\end{lemma}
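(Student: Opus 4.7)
The plan is to construct $f$ as a three-factor product $f = h \pi u_*$, where $u_* \in U$ witnesses $U \nsubseteq \trans{V_{\filter, |X|^+}}$, $h \in U$ is an injective element with substantial defect produced from $U \nsubseteq \trans{S_\nu}$, and $\pi \in \Sym{X}_{\{\filter\}} \subseteq U$ is a permutation used to reposition $\im{h}$.

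I would first extract $u_*$ together with a set $\Sigma \in \filter$ such that $u_*$ is injective on $\Sigma$ and $\Sigma u_* \notin \filter$. Unpacking $\trans{V_{\filter, |X|^+}}$ and using that $c(u), d(u) \leq |X| < |X|^+$ for every $u \in X^X$, the failure of the disjunctive definition yields $\Sigma_1, \Sigma_2 \in \filter$ with $u_*$ injective on $\Sigma_1$ and $\Sigma_2 u_* \notin \filter$. Taking $\Sigma = \Sigma_1 \cap \Sigma_2 \cap \Lambda(u_*)$ keeps $\Sigma$ in $\filter$ (since $\Lambda(u_*) \in \filter$ by the hypothesis on $\Lambda$, as $u_* \notin \trans{V_{\filter, |X|^+}}$), maintains injectivity of $u_*$ on $\Sigma$, preserves $\Sigma u_* \subseteq \Sigma_2 u_* \notin \filter$, and additionally guarantees $\Sigma \subseteq \Lambda(u_*)$.

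Next I would produce $h$ and $\pi$ so that $\im{h \pi} \subseteq \Sigma$. An injective $h \in U$ with large defect comes from $U \nsubseteq \trans{S_\nu}$ for some $\nu$ in the allowed range $\aleph_0 \leq \nu \leq \min(\filter)$ (or from $U \nsubseteq \trans{S_1}$ if $\min(\filter) = \aleph_0$). Using \eqref{ultrafilter_stabiliser_as_union}, which expresses $\Sym{X}_{\{\filter\}}$ as the union $\bigcup_{\Gamma \in \filter} \Sym{X}_{(\Gamma)}$, I would choose $\Gamma \in \filter$ with $\Gamma \subseteq \Sigma$ and a permutation $\pi \in \Sym{X}_{(\Gamma)} \subseteq U$ sending $\im{h} \setminus \Gamma$ into $\Sigma \setminus \Gamma$; such a $\pi$ exists once the cardinality inequality $|\im{h} \setminus \Gamma| \leq |\Sigma \setminus \Gamma|$ holds inside $X \setminus \Gamma$, which the flexibility in the choices of $h$, $\Gamma$, and $\Sigma$ is designed to arrange.

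Finally, setting $f = h \pi u_*$: it lies in $\mutt{U}{\Lambda}$ because $\Lambda(\pi) = X$ (the only transversal of a bijection) and $\im{h \pi} \subseteq \Sigma \subseteq \Lambda(u_*)$; it is injective because $h$ and $\pi$ are bijective on their domains and $u_*$ is injective on $\Sigma \supseteq \im{h \pi}$; and $\im{f} = \im{h \pi} u_* \subseteq \Sigma u_* \notin \filter$ forces $\im{f} \notin \filter$. The main obstacle I foresee is the cardinality bookkeeping in the middle step, making $d(h)$, $|\Gamma|$, and $|\Sigma \setminus \Gamma|$ all compatible simultaneously; this is delicate because $\min(\filter)$ can be any infinite cardinal up to $|X|$, and is precisely why the lemma hypothesises $U \nsubseteq \trans{S_\nu}$ for the entire range of $\nu$ rather than for one fixed value.
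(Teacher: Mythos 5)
This lemma is imported from \cite{maximal_east_mitchell_peresse} and not proved in the present paper, so I can only judge your proposal on its own terms. Your first step is sound: for $\mu=|X|^+$ the conditions $c(u)\geq\mu$ and $d(u)\geq\mu$ are vacuous, so a witness $u_*\notin \trans{V_{\filter,|X|^+}}$ really is injective on some $\Sigma_1\in\filter$ and sends some $\Sigma_2\in\filter$ outside $\filter$, and intersecting with $\Lambda(u_*)\in\filter$ gives the set $\Sigma$ you want. The gap is in the middle step, and it is not mere bookkeeping --- the plan as stated cannot work in general. First, for infinite $\nu$ a witness of $U\nsubseteq\trans{S_\nu}$ satisfies $c(h)<\nu$ and $d(h)\geq\nu$, so it need not be injective at all; turning it into an injective element of $\mutt{U}{\Lambda}$ of comparable defect is itself a nontrivial inductive construction (this is exactly what Lemma \ref{sym_inj} does, and even that lemma needs $U\nsubseteq\trans{S_\nu}$ for \emph{all} $\nu\leq|X|$, which you do not have here).

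Second, and fatally: the hypotheses only exclude $\trans{S_\nu}$ for $\nu\leq\min(\filter)$, so it is consistent that every element of $U$ has defect at most $\min(\filter)$; since $d(fg)\leq d(f)+d(g)$, every injective element of $\mutt{U}{\Lambda}$ then has defect at most $\min(\filter)$. Meanwhile $\Sigma\subseteq\Sigma_2$ can only be shrunk (enlarging it destroys $\Sigma u_*\notin\filter$), and $|X\setminus\Sigma_2|$ can equal $|X|$ even when $\min(\filter)=\aleph_0$ (take $\Sigma_2$ a countable member of a non-uniform ultrafilter on an uncountable $X$). In that situation no permutation whatsoever maps the image of an injective map of defect $\leq\aleph_0$ into $\Sigma$, so no choice of $h$, $\Gamma$, $\pi$ can achieve $\im{h\pi}\subseteq\Sigma$. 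The conclusion $\im{f}\notin\filter$ must therefore be obtained without confining $\im{f}$ to the single set $\Sigma u_*$: one has to control the part of $\im{g}$ that lands in $\dom{u_*}\setminus\Sigma_2$ and argue that its image under $u_*$ still avoids the filter (or iterate the construction), which is precisely the hard combinatorial core of Lemma 8.1 that your sketch leaves unaddressed. A correct self-contained argument would need a genuinely different mechanism at this point.
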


We now prove the $I_X$-analogue of Lemma \ref{ultra_inj}.

\begin{lemma}\label{ultra_inj_ix}
Let $\filter$ be a non-principal ultrafilter on an infinite set $X$ and let $\min(\filter)$ be the least cardinality of an element of $\filter$.  Let $U$ be a subset of $I_X$ containing the stabiliser $\Sym{X}_{\{\mathcal{F}\}}$ of $\mathcal{F}$ but which is not 
contained in ${V_{\mathcal{F}, \mu}}$ (as defined in Theorem \ref{thm:maximal_ultrafilter}), ${S_1}$, or ${S_{\nu}}$ (as defined in Theorem \ref{thm:maximal_sym}) for any cardinals $\mu,\nu$ such that 
$\aleph_0\leq\nu\leq \min(\mathcal{F})< \mu\leq |X|^+$. Then there exists a total $f$ in the semigroup generated by $U$ such that $\im{f} \not\in \mathcal{F}$. 
\end{lemma}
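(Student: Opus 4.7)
My plan is to mimic the proof of Lemma \ref{finite_stab_total}: restrict $U$ to an appropriate subset, transfer to $X^X$ via a minimal transformation extension, apply the already-established analogue Lemma \ref{ultra_inj}, and pull the resulting injective map back to $I_X$ using Lemma \ref{lem:fromIXtoXX}. Specifically, I would set $U' = \{u \in U : \dom{u} \in \filter\}$, so $\Sym{X}_{\{\filter\}} \subseteq U'$ since permutations are total.

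First I would show $U'$ inherits the non-containment hypotheses of the lemma. Any witness $u \in U \setminus V_{\filter,\mu}$ has $\dom{u} \in \filter$ directly from the definition of $V_{\filter,\mu}$, so $u \in U'$. For $\nu = 1$ or $\aleph_0 \leq \nu \leq \min(\filter)$, a witness $u \in U \setminus S_\nu$ satisfies $c(u) < \nu \leq \min(\filter)$, so $|X \setminus \dom{u}| < \min(\filter)$; since $\filter$ is an ultrafilter this forces $\dom{u} \in \filter$, hence $u \in U'$. Thus $U' \not\subseteq V_{\filter,\mu}$, $U' \not\subseteq S_1$, and $U' \not\subseteq S_\nu$.

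Next, let $\mte$ be a minimal transformation extension of $U'$ and $\Lambda$ the canonical assignment of transversals for $\mte(U')$, so $\Lambda(\mte(u)) = \dom{u} \in \filter$ for all $u \in U'$, matching the assignment-of-transversals hypothesis of Lemma \ref{ultra_inj}. Because $\mte$ preserves rank, collapse, and defect, the non-containments of $\mte(U')$ in $\trans{S_1}$ and $\trans{S_\nu}$ follow at once. For the non-containment in $\trans{V_{\filter,\mu}}$, the restriction to $U'$ simultaneously eliminates the ``$\dom{u} \notin \filter$'' clause from $V_{\filter,\mu}$ and the corresponding ``$(\forall \Sigma \in \filter)(c(\mte(u)|_\Sigma) > 0)$'' clause from $\trans{V_{\filter,\mu}}$, and one then verifies $u \in V_{\filter,\mu} \iff \mte(u) \in \trans{V_{\filter,\mu}}$ by matching filter preservation. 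Applying Lemma \ref{ultra_inj} yields an injective $f \in \mutt{\mte(U')}{\Lambda}$ with $\im{f} \notin \filter$, and Lemma \ref{lem:fromIXtoXX} places $f$ in the semigroup generated by $U'$ (hence by $U$); as an injective element of $X^X$, $f$ is a total element of $I_X$, as required.

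The main technical obstacle is the equivalence $u \in V_{\filter,\mu} \iff \mte(u) \in \trans{V_{\filter,\mu}}$ for $u \in U'$, specifically the matching of filter preservation between $u$ and $\mte(u)$. Writing $\Sigma\,\mte(u) = (\Sigma \cap \dom{u})\,u \cup (\Sigma \setminus \dom{u})\,\mte(u)$, the second piece has cardinality at most $c(u)$; when this extra piece is too small to lie in $\filter$, the ultrafilter axiom transfers filter-membership between $(\Sigma \cap \dom{u})\,u$ and $\Sigma\,\mte(u)$ in both directions. The non-containment of $U'$ in $S_\nu$ for $\aleph_0 \leq \nu \leq \min(\filter)$ supplies witnesses with $c(u) < \min(\filter)$, which is exactly what is needed to secure the equivalence and hence the non-containment $\mte(U') \not\subseteq \trans{V_{\filter,\mu}}$.
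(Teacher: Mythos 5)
Your overall strategy --- restricting to $U' = \set{u \in U}{\dom{u} \in \filter}$, transferring to $X^X$ via a minimal transformation extension with the canonical assignment of transversals, invoking Lemma \ref{ultra_inj}, and pulling the resulting injective map back with Lemma \ref{lem:fromIXtoXX} --- is exactly the paper's proof, and your verifications that $U'$ inherits the non-containment hypotheses and that the $\trans{S_1}$ and $\trans{S_{\nu}}$ conditions transfer under $\mte$ are correct.

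The one step that does not go through as written is the last one, securing $\mte(U') \nsubseteq \trans{V_{\filter,\mu}}$. You need a single $u \in U'$ with $\mte(u) \notin \trans{V_{\filter,\mu}}$, and the natural candidate is a witness $u \in U' \setminus V_{\filter,\mu}$; such a $u$ is only guaranteed to satisfy $c(u) < \mu$, and since $\mu$ may exceed $\min(\filter)$, the ``extra piece'' $(\Sigma \setminus \dom{u})\mte(u)$ in your decomposition need not be too small to lie in $\filter$. Your proposed remedy --- that non-containment of $U'$ in $S_{\nu}$ for $\nu \leq \min(\filter)$ supplies witnesses with $c(u) < \min(\filter)$ --- is a non sequitur: those witnesses need not lie outside $V_{\filter,\mu}$ (for instance, a total $u$ with $d(u)=\aleph_0$ which preserves $\filter$ lies outside $S_{\aleph_0}$ but inside every $V_{\filter,\mu}$), so they cannot stand in for the $V_{\filter,\mu}$-witness, and the equivalence you want is not established for the element that actually matters. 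The repair is already implicit in your own decomposition and is what the paper does: if $u \in U' \setminus V_{\filter,\mu}$ fails the filter-preservation clause via some $\Sigma \in \filter$ with $\Sigma u \notin \filter$, use $\Sigma'' = \Sigma \cap \dom{u} \in \filter$ as the witness set, so that $\Sigma''\mte(u) = \Sigma'' u \subseteq \Sigma u \notin \filter$ with no extra piece and no cardinality condition; if instead $d(u) \geq \mu$, preservation of defect finishes the job. With that substitution your argument coincides with the paper's.
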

\begin{proof}
Let $U'=\set{u \in U}{\dom{u} \in \filter}$, let $\mte$ be a minimal transformation extension of $U'$ as defined in Section \ref{fromIXtoXX} and let $\Lambda$ be the corresponding canonical assignment of transversals.  Note that, by construction, $\Lambda(\mte(u)) = \dom{u}\in \filter$ for every $u \in U'$. By Lemma \ref{lem:fromIXtoXX}, it suffices to show that $\mte(U')$ satisfies the conditions of Lemma \ref{ultra_inj}.

Note that the complement $U''=\set{u \in U}{\dom{u} \not \in \filter}$ of $U'$ in $U$ contains no permutations and is contained in $V_{\filter, \mu}$ for every infinite $\mu \leq |X|^+$. Hence $U'$ contains $\Sym{X}_{\{\mathcal{F}\}}$ and is not contained in any $V_{\filter, \mu}$. Moreover, if $u \in U''$, then $X \setminus \dom{u} \in \filter$ and so $c(u)=|X \setminus \dom{u}| \geq \min(\filter)$. Hence $U'' \subseteq S_{\nu}$ for all $\nu \leq \min(\filter)$ and so $U'$ is not contained in any such $S_{\nu}$.

Just as in the proofs of Lemmas \ref{finite_stab_total} and \ref{lem:constructing_f}, it follows that $\mte(U')$ contains $\Sym{X}_{\{\mathcal{F}\}}$ and is not contained in $\trans{S_{\nu}}$ for all $\nu\leq \min(F)$. Finally, let $\min(\filter) < \mu \leq |X|^+$ and $u \in U' \setminus V_{\filter, \mu}$. It follows from the alternative definition of $V_{\filter, \mu}$ given in \eqref{filter_alternative_definition} that $c(u)<\mu$, $\dom{u}\in \filter$ and either $d(u)\geq \mu$ or there exists $\Sigma \in \filter$ such that $\Sigma u \not \in \filter$.
Thus $c(\mte(u))=c(u)<\mu$ and $c(\mte(u)|_{\Sigma'})=0$ where $\Sigma'=\dom{u} \in \filter$.  If $d(u)\geq \mu$, then $d(\mte(u))=d(u)\geq \mu$ and so $\mte(u) \in \mte(U')\setminus \trans{V_{\filter, \mu}}$ and we are done. Otherwise, there exists $\Sigma \in \filter$ such that $\Sigma u \not \in \filter$. Then $\Sigma''=\Sigma \cap \dom{u} \in \filter$ and $\Sigma'' \mte(u)=\Sigma''u \subseteq \Sigma u \not \in \filter$. We again conclude that $\mte(u) \in \mte(U')\setminus \trans{V_{\filter, \mu}}$.
\end{proof}

\begin{proof}[Proof of Theorem \ref{thm:maximal_ultrafilter}.]
Let $Y$ be any moiety of $X$ such that $Y \not \in \filter$. By \eqref{ultrafilter_stabiliser_as_union},  $G=\Sym{X}_{\{\mathcal{F}\}}$ contains the pointwise stabiliser of $X \setminus Y \in \filter$ and so $G$ satisfies condition \eqref{lem:total->all/YSym} of Theorem \ref{lem:total->all}. Hence it suffices to show that the collection of semigroups $V_{\filter, \mu}$ and $\inv{V_{\filter, \mu}}$ over all cardinals $\mu$ with $\min(\filter) < \mu \leq |X|^+$ together with the maximal semigroups containing $\Sym{X}$ classified in Theorem \ref{thm:maximal_sym} satisfy the five conditions of Theorem \ref{lem:total->all}.

\vspace{\baselineskip}\noindent
{\bf (i)} If $f \in \setStab{\Sym{X}}{\filter}$,  then $c(f)=d(f)=0$ and $\Sigma \in \filter \iff \Sigma f \in \filter$ for every $\Sigma \subseteq X$.  Thus $f$ lies in every $V_{\filter, \mu}$ and $\inv{V_{\filter, \mu}}$.

\vspace{\baselineskip}\noindent
{\bf (ii)} Let $\mu$ be a cardinal with $\min(\filter)<\mu\leq |X|^+$.  Note that $V_{\filter,\mu}\cap\Sym{X}=\setStab{\Sym{X}}{\filter}$.  In particular,  $V_{\filter,\mu}$ and $\inv{V_{\filter, \mu}}$ are proper subsets of $I_X$.

Let $f,g \in V_{\filter,\mu}$.  We need to show that $fg \in V_{\filter,\mu}$. We may assume that $r(f)=r(g)=|X|$ since $\fin$ is an ideal of $I_X$.
If $c(f)\geq \mu$,  then $c(fg)\geq c(f)\geq \mu$ by Lemma \ref{lem:defect_properties} part \eqref{lem:defect_properties/collapse_composition}.  Similarly,  if $\dom{f} \not \in \filter$,  then $\dom{fg}\not \in \filter$,  since $\dom{fg}\subseteq \dom{f}$ and filters are closed under taking supersets.  Hence $fg \in V_{\filter,\mu}$ in both cases.  So we may assume that $f$ satisfies $c(f)<\mu$, $\dom{f}\in \filter$,  $d(f)<\mu$,  and $(\forall \Sigma \subseteq X)(\Sigma \in \filter \iff \Sigma f\in \filter)$.

If $c(g)\geq \mu$,  then $c(fg)\geq \mu$ by Lemma \ref{lem:defect_properties} part \eqref{lem:defect_properties/defect<collapse}.  If $\dom{g} \not \in \filter$,  then $\dom{fg}=\dom{g}f^{-1} \not \in \filter$
since $f$ maps filter elements to filter elements and $(\dom{g}f^{-1})f\subseteq \dom{g}\not \in \filter$.  Finally,  if $g$ also satisfies $d(g)<\mu$ and $(\forall \Sigma \subseteq X)( \Sigma \in \filter \iff \Sigma g \in \filter)$,  then $d(fg)\leq d(f)+d(g)<\mu$ by Lemma \ref{lem:defect_properties} part \eqref{lem:defect_properties/defect_composition} and for any $\Sigma \subseteq X$ we have $\Sigma \in \filter \iff \Sigma f \in \filter \iff \Sigma fg \in \filter$.  Hence $fg\in  V_{\filter,\mu}$ in all cases. 
We have shown that $V_{\filter,\mu}$, and hence $\inv{V_{\filter,\mu}}$, are subsemigroups of $I_X$,  as required.

\vspace{\baselineskip}\noindent
{\bf (iii)} 
Fix a cardinal $\mu$ with $\min(\filter)<\mu\leq |X|^+$. We need to show that $V_{\filter, \mu}$ is not contained in any $S_{\nu}, \inv{S_{\nu}}, V_{\filter, \nu}$, or $\inv{V_{\filter, \nu}}$ over the relevant cardinals $\nu$. Since $\filter$ is an ultrafilter, there exists $\Sigma \in \filter$ which is a moiety of $X$. 
Let $f\in I_X$ be surjective (i.e. $d(f)=0$) with $\dom{f}= X \setminus \Sigma$. Then $\dom{f} \not \in \filter$ and $c(f)=|X|$. Hence $f \in V_{\filter, \mu}$ and $f \not \in \inv{S_{\nu}}$ for  $\nu=1$ and every infinite $\nu$ with $\nu \leq |X|$. Moreover, $f \not \in \inv{V_{\filter, \nu}}$  for every $\nu$ with $\min(\filter)<\nu\leq |X|^+$.

We now define $g_{\nu} \in I_X$ for every cardinal $\nu$ which satisfies $\mu \neq \nu \leq |X|^+$. If $\nu <\mu$,  then let $g_{\nu}$ fix $\Sigma$ pointwise, be total (i.e. $c(g_{\nu})=0$) and satisfy $d(g_{\nu})=\nu$.  If $\mu<\nu \leq |X|^+$, then let $g_{\nu} \in I_X$ satisfy $c(g_{\nu})=\mu$, $\Sigma \subseteq \dom{g_{\nu}}$, and $\im{g_{\nu}}=X \setminus \Sigma$.
Then $g_{\nu} \in V_{\filter, \mu} \cap \inv{V_{\filter, \mu}}$ for every $\nu$ with $\mu\neq \nu \leq |X|^+$. Moreover, $g_{\nu} \not \in S_{\nu}$ for $\nu=1$ and any infinite $\nu$ with $\mu \neq \nu\leq |X|$ and $g_{\nu} \not \in V_{\filter, \nu}$ for any infinite $\nu$ with $\mu\neq \nu \leq |X|^+$. 
It only remains to find $h \in V_{\filter, \mu} \setminus S_{\mu}$ in the case that $\mu\leq |X|$. Let $\Gamma \in \filter$ have cardinality $\min(\filter)<\mu$. By intersecting $\Gamma$ with $\Sigma$, we may assume that $|X \setminus \Gamma|=|X|$.  Let $h\in I_X$ have domain $X \setminus \Gamma$ and satisfy $d(h)=|X|$. Then $h \in V_{\filter, \mu} \cap \inv{V_{\filter, \mu}}$ but $h \not \in S_{\mu}$, as required. 

\vspace{\baselineskip}\noindent
{\bf (iv)} To see that $\inv{V_{\filter, \mu}}$ is indeed the inverse of $V_{\filter, \mu}$, first recall that $c(f^{-1})=d(f)$ and $\dom{f^{-1}}=\im{f}$ for any $f \in I_X$ and $\inv{\fin}=\fin$. So it suffices to show that the set $S=\set{f\in I_X}{(\forall \Sigma \subseteq X)(\Sigma \in \filter \iff \Sigma f\in \filter)}$ is closed under taking inverses. If $f\in S$, then $\im{f}\in \filter$, since $X \in \filter$. So if $\Sigma \subseteq X$, then $(\Sigma f^{-1})f =\Sigma \cap \im{f} \in \filter \iff \Sigma \in \filter$. Hence $\Sigma f^{-1} \in \filter \iff \Sigma \in \filter$, since $f\in S$. Hence $f^{-1}\in S$, as required. 

\vspace{\baselineskip}\noindent
{\bf (v)} Let $U$ be a subsemigroup of $I_X$ which contains $\Sym{X}_{\{\mathcal{F}\}}$ and is not contained in $V_{\filter, \mu}$, $\inv{V_{\filter, \mu}}$, $S_{\nu}$, or $\inv{S_{\nu}}$ for any $\mu$ and $\nu$ with $\min(\filter)<\mu \leq |X|^+$ and $\nu=1$ or $\aleph_0 \leq \nu \leq |X|$. By Lemma \ref{ultra_inj_ix}, there exists a total $f_0\in U$ such that $\im{f_0} \not \in \filter$. Note that by \eqref{ultrafilter_stabiliser_as_union}, $U$ contains every $a \in \Sym{X}$ with supp$(a) \subseteq \im{f_0}$. Thus we may apply Lemma \ref{lem:constructing_f} to $f_0, \kappa = |X|$, and $U$ to find a total $f\in U$ with $d(f)=|X|$ and $\im{f}\subseteq \im{f_0} \not \in \filter$. Replacing $f$ by $f^2$ if necessary, we may ensure that $\im{f}$ is a moiety of $Y \cup \im{f}$. Since $Y \cup \im{f} \not \in \filter$, there exists $b \in \Sym{X}_{\{\mathcal{F}\}} \subseteq U$ such that $\im{fb} \subseteq Y$, as required.

\vspace{\baselineskip} We have shown that all five conditions hold. It follows from Theorem \ref{lem:total->all} that the maximal subsemigroups of $I_X$ which contain $\setStab{\Sym{X}}{\filter}$ but not $\Sym{X}$ are the semigroups $V_{\filter, \mu}$ and $\inv{V_{\filter, \mu}}$ and the maximal inverse semigroups of $I_X$ containing $\setStab{\Sym{X}}{\filter}$ but not $\Sym{X}$ are the maximal elements of the set of intersections $V_{\filter, \mu} \cap \inv{V_{\filter, \mu}}$ and $S_{\lambda} \cap \inv{S_{\lambda}}$ over all infinite $\mu$ with $\mu\leq |X|^+$ and $\lambda=1$ or $\aleph_0 \leq \lambda \leq |X|$. It only remains to fix a cardinal $\mu$ with $\min(\filter)<\mu\leq |X|^+$ and show that $V_{\filter, \mu} \cap \inv{V_{\filter, \mu}}$ is maximal in this set of intersections. In the proof of condition (iii) above we already found $g_\nu \in V_{\filter, \mu} \cap \inv{V_{\filter, \mu}}$ for every $\nu$ with $\mu \neq \nu \leq |X|^+$ such that $g_\nu \not \in S_{\nu} \supseteq S_{\nu} \cap \inv{S_{\nu}}$ for $\mu \neq \nu \leq |X|$ and $g_{\nu} \not \in V_{\filter, \nu} \supseteq V_{\filter, \nu} \cap \inv{V_{\filter, \nu}}$ for any $\nu\neq \mu$. Moreover, in the case when $\mu \leq |X|$, we found $h \in V_{\filter, \mu} \cap \inv{V_{\filter, \mu}}$ such that $h \not \in S_{\mu} \supseteq S_{\mu} \cap \inv{S_{\mu}}$.
\end{proof}

\begin{corollary} The symmetric inverse monoid $I_X$ on an infinite set $X$ has $2^{2^{|X|}}$ maximal subsemigroups and $2^{2^{|X|}}$ maximal inverse subsemigroups
\end{corollary}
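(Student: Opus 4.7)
The plan is to prove the two bounds $|I_X| = 2^{|X|}$ and $2^{2^{|X|}}$ separately. The upper bound is immediate: since $|X \times X| = |X|$ for infinite $X$ and every element of $I_X$ is a subset of $X \times X$, we have $|I_X| \leq 2^{|X|}$, and the reverse inequality follows from $\Sym{X} \subseteq I_X$. Hence $I_X$ has $2^{2^{|X|}}$ subsets in total, which bounds both the number of maximal subsemigroups and the number of maximal inverse subsemigroups from above.

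For the lower bound, I would appeal to Pospi\v sil's Theorem (cited in the introduction) to obtain $2^{2^{|X|}}$ pairwise distinct non-principal ultrafilters on $X$. For each such ultrafilter $\filter$, Theorem~\ref{thm:maximal_ultrafilter} supplies a maximal subsemigroup $V_{\filter, \mu}$ and a maximal inverse subsemigroup $V_{\filter,\mu}\cap \inv{V_{\filter,\mu}}$ of $I_X$ (fix any admissible $\mu$, e.g.\ $\mu = |X|^+$). It therefore suffices to show that distinct ultrafilters yield distinct maximal (inverse) subsemigroups of this form.

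The key observation is that $V_{\filter,\mu}\cap \Sym{X} = \setStab{\Sym{X}}{\filter}$, and hence also $(V_{\filter, \mu}\cap \inv{V_{\filter,\mu}})\cap \Sym{X} = \setStab{\Sym{X}}{\filter}$. Indeed, any $f \in \Sym{X}$ satisfies $c(f)=d(f)=0<\mu$ and $\dom{f}=X\in \filter$, so membership in $V_{\filter,\mu}$ reduces to the condition $(\forall \Sigma)(\Sigma\in\filter \iff \Sigma f\in \filter)$, which is exactly the defining property of $\setStab{\Sym{X}}{\filter}$. As noted in the introduction (following Pospi\v sil), distinct ultrafilters on $X$ have distinct stabiliser subgroups in $\Sym{X}$; consequently $\filter_1\neq \filter_2$ implies $V_{\filter_1,\mu}\cap \Sym{X}\neq V_{\filter_2,\mu}\cap\Sym{X}$, and likewise for the intersections with $\inv{V_{\filter,\mu}}$.

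The main (and only) obstacle is the intersection-with-$\Sym{X}$ lemma above; once that is in hand, the injectivity of the map $\filter \mapsto V_{\filter,\mu}$ (respectively $\filter \mapsto V_{\filter,\mu}\cap \inv{V_{\filter,\mu}}$) follows immediately, giving $2^{2^{|X|}}$ distinct maximal subsemigroups and $2^{2^{|X|}}$ distinct maximal inverse subsemigroups of $I_X$. Combined with the upper bound, both counts are exactly $2^{2^{|X|}}$.
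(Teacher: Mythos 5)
Your proposal is correct and follows essentially the same route as the paper: Pospi\v sil's Theorem supplies $2^{2^{|X|}}$ ultrafilters, Theorem~\ref{thm:maximal_ultrafilter} attaches maximal (inverse) subsemigroups to each, and distinctness is read off from the stabilisers via $V_{\filter,\mu}\cap\Sym{X}=\setStab{\Sym{X}}{\filter}$ (the paper uses \eqref{ultrafilter_stabiliser_as_union} for the same purpose). Your explicit verification of the intersection-with-$\Sym{X}$ identity and of the upper bound $|I_X|=2^{|X|}$ is a slightly more detailed write-up of the same argument.
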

\begin{proof}
Pospi\u sil's Theorem \cite[Theorem 7.6]{Jech2003} states that in \zfc, the cardinality of the set of uniform ultrafilters on any infinite set $X$ is $2^{2^{\vert\! X \!\vert}}$ (the same as the carnality of the powerset of $I_X$). By Theorem \ref{thm:maximal_ultrafilter}, each uniform ultrafilter corresponds to two maximal subsemigroups and one maximal inverse subsemigroup of $I_X$. By \eqref{ultrafilter_stabiliser_as_union},  distinct ultrafilters give rise to distinct maximal subgroups of $\Sym{X}$ and hence distinct maximal (inverse) subsemigroups of $I_X$. 
\end{proof}

\subsection{Stabilisers of finite partitions}

A \emph{finite partition} of a set $X$ is a partition $\partition = \{ \Sigma_0, \dots, \Sigma_{n-1} \}$ of $X$ into $n \geq 2$ parts such that for all $i \in n$, $\card{\Sigma_i} = \card{X}$.
The \emph{stabiliser} $\Stab{\partition}$ is the group
    \begin{equation*}
        \Stab{\partition} = \set{f \in \Sym{X} }{ (\forall i \in n) (\exists j \in n) (\Sigma_i f = \Sigma_j)}.
    \end{equation*}
As shown in a note added in proof to \cite{subgroups_macpherson_neumann}, the stabiliser of a finite partition $\partition$ of $X$ is not a maximal subgroup of $\Sym{X}$ but the \emph{almost stabiliser} $\AStab{\partition}$ of $\partition$ defined as
    \begin{align*}
        \AStab{\partition} = \set{f \in \Sym{X}}{\;&(\forall i \in n) (\exists j \in n)\\
        &(|\Sigma_i f \setminus \Sigma_j| +|\Sigma_j \setminus \Sigma_i f| < |X|)}
    \end{align*}
is a maximal subgroup of $\Sym{X}$. 
For the rest of this section, let $\partition$ be a finite partition of $X$. We will classify the maximal subsemigroups and maximal inverse subsemigroups of $I_X$ which contain $\Stab{\partition}$. It will turn out that each such maximal (inverse) subsemigroup contains $\AStab{\partition}$. 

For $f \in I_X$, define the binary relation $\rho_f$ on $n=\{0,\dots, n-1\}$ by
    \begin{equation*}
    \rho_f = \set{(i,j) \in n \times n }{ \card{\Sigma_i f \cap \Sigma_j} = \card{X}}.
    \end{equation*}
Loosely speaking, the binary relation $\rho_f$ captures the action of $f$ on the parts of $\partition$. 
Recall that the \emph{binary relation monoid} $B_n$ consists of all binary relations on $n$ under the usual composition of binary relations defined by
$$\rho\sigma=\set{(i,j) \in n\times n}{(i,k) \in \rho \text{ and }(k,j) \in \sigma \text{ for some }k\in n}$$
for all $\rho, \sigma \in B_n$. The domain and image of a binary relation $\rho \in B_n$ are defined as
\begin{align*}
\dom{\rho}&=\set{i \in n}{(i,j) \in \rho \text{ for some }j \in n} \\ 
\im{\rho}&=\set{j \in n}{(i,j) \in \rho \text{ for some }i \in n}.
\end{align*}

We can now state the main result of this section.

\begin{theorem} \label{thm:maximal_astab}
Let $X$ be an infinite set and $\partition = \{\Sigma_0, \dots, \Sigma_{n-1}\}$ a finite partition of $X$ into $n\geq 2$ parts.
Then the maximal subsemigroups of $I_X$ which contain $\Stab{\partition}$ but not $\Sym{X}$ are
\begin{align*}
    A_{\partition} &= \set{f \in I_X }{ \rho_f \in \Sym{n} \text{ or } \dom{\rho_f} \neq n} \\
    \inv{A_{\partition}} &= \set{f \in I_X }{ \rho_f \in \Sym{n} \text{ or } \im{\rho_f} \neq n}
\end{align*}
and the unique maximal inverse subsemigroups of $I_X$ containing $\Stab{\partition}$ but not $\Sym{X}$ is
\begin{equation*}
A_{\partition} \cap \inv{A_{\partition}} = \set{f \in I_X }{ \rho_f \in \Sym{n} \text{ or } \dom{\rho_f} \neq n \neq \im{\rho_f}}
\end{equation*}
\end{theorem}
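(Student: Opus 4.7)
The plan is to invoke Theorem~\ref{lem:total->all} with $G=\Stab{\partition}$, with $Y$ a moiety of $\Sigma_0$ whose complement in $\Sigma_0$ is also infinite, and with $\mathcal{M}$ consisting of $A_\partition$, $\inv{A_\partition}$ together with the semigroups $S_\mu$ and $\inv{S_\mu}$ from Theorem~\ref{thm:maximal_sym}. Hypothesis~\eqref{lem:total->all/YSym} is immediate: any $\sigma \in \Sym{Y}$ extends to $\bar\sigma \in \Sym{X}$ by the identity on $X \setminus Y$, and since $Y \subseteq \Sigma_0$ the permutation $\bar\sigma$ fixes each $\Sigma_i$ setwise and therefore lies in $\Stab{\partition}$. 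Conditions \eqref{lem:total->all/G_subset}--\eqref{lem:total->all/M_inverses} are routine: $\Stab{\partition} \subseteq A_\partition \cap \inv{A_\partition}$ since every $f \in \Stab{\partition}$ has $\rho_f \in \Sym{n}$; closure of $A_\partition$ under composition uses that, modulo the ideal $\fin$, the relation $\rho_{fg}$ behaves like the product $\rho_f\rho_g$ in $B_n$; and the various incomparability statements are witnessed by a handful of explicit charts, most importantly the map $f_0$ sending each $\Sigma_i$ injectively onto a moiety $\Sigma_i' \subsetneq \Sigma_i$, for which $\rho_{f_0}$ is the identity of $\Sym{n}$, $c(f_0)=0$, and $d(f_0)=|X|$.

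The main obstacle is condition~\eqref{lem:total->all/in-out-condition}: every subsemigroup $U$ of $I_X$ that contains $\Stab{\partition}$ and is contained in none of the semigroups in $\mathcal{M}$ should contain a total element with image in $Y$. Following the template of Lemmas~\ref{finite_stab_total} and~\ref{ultra_inj_ix}, I would transfer the problem to $X^X$ via the machinery of Section~\ref{fromIXtoXX}. Concretely, first restrict to the subsemigroup $U' = \{u\in U : \dom{\rho_u}=n\}$; the discarded charts all lie in $A_\partition$ and, because the missing blocks force large collapse, also in every sufficiently large $S_\nu$, so $U'$ inherits the non-containment hypotheses. Next form a minimal transformation extension $\mte$ of $U'$ with canonical assignment of transversals $\Lambda$; crucially, since $\mte(u)|_{\dom{u}}=u$ and $\im{\mte(u)}=\im{u}$, one has $\rho_{\mte(u)}=\rho_u$ for every $u\in U'$, so $\mte(U')$ escapes the $X^X$-analogue $\trans{A_\partition}$ of $A_\partition$. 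I would then cite the corresponding injective-element lemma for the partition-stabiliser case from \cite{maximal_east_mitchell_peresse}, producing an injective $f'\in\mutt{\mte(U')}{\Lambda}$ with $\im{f'}\subseteq Y$, which by Lemma~\ref{lem:fromIXtoXX} belongs to the semigroup generated by $U$. The most delicate step here is formulating the $X^X$-analogue $\trans{A_\partition}$ and matching the non-containment assumptions on $\mte(U')$ with the precise hypotheses of the cited lemma.

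With the five conditions in hand, Theorem~\ref{lem:total->all} gives that the maximal subsemigroups of $I_X$ containing $\Stab{\partition}$ are $A_\partition$, $\inv{A_\partition}$, $S_\mu$, and $\inv{S_\mu}$; removing those that contain $\Sym{X}$ leaves exactly $A_\partition$ and $\inv{A_\partition}$. For the inverse subsemigroups, Lemma~\ref{lem:inverse_intersection} identifies the candidates as the maximal elements of the family consisting of $A_\partition\cap\inv{A_\partition}$ together with all intersections $S_\mu\cap\inv{S_\mu}$, where the single entry on the left reflects that $\inv{A_\partition}=(A_\partition)^{-1}$. The map $f_0$ from the first paragraph lies in $A_\partition \cap \inv{A_\partition}$ but in no $S_\mu \cap \inv{S_\mu}$, while any permutation $h \in \Sym{X}$ producing a non-functional $\rho_h$ on $n$ lies in every $S_\mu \cap \inv{S_\mu}$ but outside $A_\partition \cap \inv{A_\partition}$. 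These witnesses establish that $A_\partition \cap \inv{A_\partition}$ is maximal in the set of intersections and that it does not contain $\Sym{X}$; since each $S_\mu \cap \inv{S_\mu}$ does contain $\Sym{X}$, the unique maximal inverse subsemigroup of $I_X$ containing $\Stab{\partition}$ but not $\Sym{X}$ is $A_\partition \cap \inv{A_\partition}$, as claimed.
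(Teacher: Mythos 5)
Your overall skeleton (Theorem~\ref{lem:total->all} with $G=\Stab{\partition}$, $Y$ inside $\Sigma_0$, and $\mathcal{M}$ consisting of $A_{\partition}$, $\inv{A_{\partition}}$ and the $S_\mu$, $\inv{S_\mu}$), your treatment of conditions (i)--(iv), and your final bookkeeping for the inverse case all match the paper. The gap is in condition (v), which is exactly where the work is. You propose to run the Section~\ref{fromIXtoXX} transfer and then ``cite the corresponding injective-element lemma for the partition-stabiliser case from \cite{maximal_east_mitchell_peresse}''; but you never state that lemma, and the paper does not use (or even formulate) a $\trans{A_{\partition}}$-analogue of Lemmas~\ref{sym_inj}, \ref{finite_stab_inj}, \ref{ultra_inj}. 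Instead it proves (v) directly inside $I_X$: a transfinite induction over the cardinals $\mu\le|X|$, using the witnesses $g_\nu\in U\setminus S_\nu$ and Lemma~\ref{lem:missing_all_part} (which is stated for total elements of $I_X$, so no transfer is needed), produces a total $f_{|X|}\in U$ with $|\Sigma_i\setminus\im{f_{|X|}}|=|X|$ for every $i$; then Lemmas~\ref{lem:nxn} and~\ref{lem:compose_relations} applied to witnesses $g\in U\setminus A_{\partition}$ and $h\in U\setminus\inv{A_{\partition}}$ yield $t\in U$ with $\rho_t=n\times n$, and $f_{|X|}at\in U$ is total with image in $\Sigma_0$. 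Your proposal replaces this entire construction with an unjustified citation, so as written it does not constitute a proof.

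There is also a concrete defect in the one step of your (v) that you do spell out. You restrict to $U'=\{u\in U:\dom{\rho_u}=n\}$ and assert that $U'$ ``inherits the non-containment hypotheses''. That is true for the $S_\nu$ (a chart with $\dom{\rho_u}\ne n$ misses $|X|$ points of some $\Sigma_i$ from its domain, hence has collapse $|X|$), and for $A_{\partition}$ by definition, but it fails for $\inv{A_{\partition}}$: a chart $u$ with $\dom{u}=\Sigma_0$ whose image meets every $\Sigma_j$ in a set of size $|X|$ has $\im{\rho_u}=n$ and $\rho_u\notin\Sym{n}$, so $u\notin\inv{A_{\partition}}$, yet $\dom{\rho_u}=\{0\}\ne n$, so $u$ is discarded. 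Thus $U'$ may well be contained in $\inv{A_{\partition}}$ even though $U$ is not, and whatever $X^X$-lemma you intend to invoke would then lack one of its hypotheses. This is precisely the kind of check the paper performs when it makes the analogous restrictions in Lemmas~\ref{finite_stab_total} and~\ref{ultra_inj_ix}, and here it does not go through.
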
 The map $f \mapsto \rho_f$ is not a homomorphism from $I_X$ to $B_n$, but it is close to being one, in the following sense. 

\begin{lemma} \label{lem:compose_relations}
    Let $\partition$ be a finite partition of an infinite set $X$ and $f,g \in I_X$. Then $\rho_{fg}\subseteq \rho_f\rho_g$ and there exists $a \in \Stab{\partition}$ such that $\rho_{fag} = \rho_f \rho_g$.
\end{lemma}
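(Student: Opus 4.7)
The plan is to prove both containments in turn. For the inclusion $\rho_{fg} \subseteq \rho_f \rho_g$, I would fix $(i,j) \in \rho_{fg}$ and decompose
\[
\Sigma_i fg \cap \Sigma_j = \bigsqcup_{k \in n}\left((\Sigma_i f \cap \Sigma_k \cap \dom{g})g \cap \Sigma_j\right),
\]
using that $\Sigma_i f \subseteq X = \bigsqcup_k \Sigma_k$ and that $g$ is a partial bijection. Since $n$ is finite and the union has size $|X|$, some summand has size $|X|$. For such a $k$, the injectivity of $g$ gives $|\Sigma_i f \cap \Sigma_k| = |X|$ and $|\Sigma_k g \cap \Sigma_j|\geq |X|$, i.e.\ $(i,k) \in \rho_f$ and $(k,j) \in \rho_g$, so $(i,j) \in \rho_f \rho_g$.

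For the construction of $a$, I will arrange that $a \in \Stab{\partition}$ stabilises every block $\Sigma_k$ setwise. This has two benefits: first, $\rho_a = \mathrm{id}_n$, and since $a$ preserves each block, $|\Sigma_i fa \cap \Sigma_j| = |(\Sigma_i f \cap \Sigma_j)a| = |\Sigma_i f \cap \Sigma_j|$, so $\rho_{fa} = \rho_f$; applying the first part then yields $\rho_{fag} \subseteq \rho_{fa}\rho_g = \rho_f \rho_g$. The real work is to ensure the reverse inclusion $\rho_f \rho_g \subseteq \rho_{fag}$ holds by construction.

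To achieve this, for each $(i,j) \in \rho_f \rho_g$ I would pick a witness $k(i,j) \in n$ with $(i,k(i,j)) \in \rho_f$ and $(k(i,j),j) \in \rho_g$; then both $\Sigma_i f \cap \Sigma_{k(i,j)}$ and $\Sigma_{k(i,j)} \cap \dom{g} \cap g^{-1}(\Sigma_j)$ are subsets of $\Sigma_{k(i,j)}$ of cardinality $|X|$. Fixing $k \in n$ and letting $P_k = \set{(i,j) \in \rho_f \rho_g}{k(i,j)=k}$ (which is finite), I would choose pairwise disjoint $|X|$-sized subsets $B_{i,j} \subseteq \Sigma_i f \cap \Sigma_k$ and pairwise disjoint $|X|$-sized subsets $C_{i,j} \subseteq \Sigma_k \cap \dom{g} \cap g^{-1}(\Sigma_j)$ for $(i,j) \in P_k$. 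Across different $i$'s the sets $\Sigma_i f$ are already disjoint (and similarly $g^{-1}(\Sigma_j)$ across different $j$'s), so the only splitting actually required is of a single $|X|$-sized set into at most $n$ disjoint $|X|$-sized pieces, which is routine.

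Finally, I would define $a|_{\Sigma_k}$ so that it carries $B_{i,j}$ bijectively to $C_{i,j}$ for every $(i,j) \in P_k$, and then extend it to a permutation of $\Sigma_k$; doing this for each $k$ yields a permutation $a \in \Stab{\partition}$. For $(i,j) \in \rho_f \rho_g$ one then has $B_{i,j} \subseteq \Sigma_i f$ and $B_{i,j}a = C_{i,j} \subseteq \dom{g}$ with $C_{i,j}g \subseteq \Sigma_j$ of size $|X|$, so $(i,j) \in \rho_{fag}$. I expect the main technical nuisance to be the extension step: one needs $|\Sigma_k \setminus \bigcup_{(i,j)\in P_k} B_{i,j}| = |\Sigma_k \setminus \bigcup_{(i,j)\in P_k} C_{i,j}|$ in order to extend the specified partial bijection to a permutation of $\Sigma_k$. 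I would arrange this by first splitting $\Sigma_k$ into two moieties and choosing the $B_{i,j}$ and $C_{i,j}$ inside one of them, so that both complements in $\Sigma_k$ have cardinality $|X|$ and the extension exists trivially.
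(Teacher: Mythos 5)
Your argument is correct and follows essentially the same route as the paper: the first inclusion is the same counting argument over the finitely many middle blocks, and your construction of $a$ is the paper's block-by-block routing of $|X|$-sized pieces of $\Sigma_i f\cap\Sigma_k$ onto $|X|$-sized pieces of $\Sigma_k\cap\dom{g}\cap\Sigma_j g^{-1}$, with the reverse inclusion $\rho_{fag}\subseteq\rho_f\rho_g$ obtained, as it must be, from $\rho_{fa}=\rho_f$ because $a$ fixes each block setwise. The only detail to adjust is your final extension device: fixing a moiety $M$ of $\Sigma_k$ in advance and demanding $B_{i,j}\subseteq M$ need not be possible, since $\Sigma_i f\cap\Sigma_k$ could meet $M$ in fewer than $|X|$ points. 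Instead do what the paper does: partition each $\Sigma_i f\cap\Sigma_k$ (and each $\Sigma_k\cap\dom{g}\cap\Sigma_j g^{-1}$) into one more $|X|$-sized piece than you need and leave one unused; the leftover pieces lie in $\Sigma_k\setminus\bigcup B_{i,j}$ and $\Sigma_k\setminus\bigcup C_{i,j}$ respectively, so both complements have cardinality $|X|$ and the partial bijection extends to a permutation of $\Sigma_k$.
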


\begin{proof}
Let $(i,j)\in \rho_{fg}$. Then $\Sigma_i fg \cap \Sigma_j=|X|$. Since $\partition$ is finite, there exists at least one $k\in n$ such that $|\Sigma_i f\cap \Sigma_k|=|\Sigma_k g \cap \Sigma_j|=|X|$. Hence $(i,j) \in \rho_f \rho_g$. 

To show that there exists $a \in \Stab{\partition}$ such that $\rho_{fag} = \rho_f \rho_g$, let $i \in n$ be arbitrary.
    If $j \in (i)\inv{\rho}_f$, then $\card{\Sigma_j f \cap \Sigma_i} = \card{X}$, and so $\Sigma_j f \cap \Sigma_i$ can be partitioned into $\card{(i)\rho_g} + 1$ moieties.
    If $k \in (i)\rho_g$, then $\card{\Sigma_k \inv{g} \cap \Sigma_i} = \card{X}$.
    Hence $\Sigma_k \inv{g} \cap \Sigma_i$ can be partitioned into $\card{(i)\inv{\rho}_f} + 1$ moieties.
    Let $a_i \in \Stab{\partition}$ be any element mapping one of the moieties partitioning $\Sigma_j f \cap \Sigma_i$ to one of the moieties partitioning $\Sigma_k \inv{g} \cap \Sigma_i$ for all $j \in (i)\inv{\rho}_f$ and for all $k \in (i)\rho_g$, while fixing everything else.
    The required $a \in \Stab{\partition}$ is then just the composite $a_0 \dots a_{n-1}$.
\end{proof}

We require the following two lemmas from \cite{maximal_east_mitchell_peresse} in the proof of Theorem \ref{thm:maximal_astab}.

\begin{lemma}[\cite{maximal_east_mitchell_peresse}, Lemma 9.3] \label{lem:nxn}
    Let $n$ be a natural number and $\rho, \sigma \subseteq n \times n$ binary relations such that $\dom{\rho}=\im{\sigma}=n$ but $\rho, \sigma \notin \Sym{n}$.
    Then the semigroup generated by $\Sym{n} \cup \{\rho, \sigma\}$ contains the relation $n \times n$.
\end{lemma}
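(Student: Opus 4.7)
I would view binary relations on $n$ as $n \times n$ Boolean matrices, with Boolean matrix multiplication as the semigroup product and elements of $\Sym{n}$ acting by row and column permutations. The hypothesis $\dom{\rho}=n$ becomes ``every row of $\rho$ is nonzero,'' $\im{\sigma}=n$ becomes ``every column of $\sigma$ is nonzero,'' and $\rho, \sigma \notin \Sym{n}$ means neither is a permutation matrix. Write $G := \genset{\Sym{n}, \rho, \sigma}$ and $J := n \times n$; the target is $J \in G$.

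The first step is a pigeonhole observation: $\rho$ has at least one column of size $\geq 2$, and symmetrically $\sigma$ has at least one row of size $\geq 2$. Indeed, $\dom{\rho}=n$ forces $|\rho| \geq n$, and if every column had size $\leq 1$ then $|\rho| \leq n$, with equality forcing exactly one entry per row and column, i.e.\ $\rho \in \Sym{n}$, a contradiction. Using this I would pick $i_1 \neq i_2$ and $c \in n$ with $(i_1, c), (i_2, c) \in \rho$, and $r \in n$ and $j_1 \neq j_2$ with $(r, j_1), (r, j_2) \in \sigma$; for any $\pi \in \Sym{n}$ with $c\pi = r$, the product $\tau_1 := \rho \pi \sigma \in G$ contains the $2 \times 2$ block $\{i_1, i_2\} \times \{j_1, j_2\}$, since $(i_\ell, r) \in \rho \pi$ and $(r, j_m) \in \sigma$ give $(i_\ell, j_m) \in \tau_1$ for $\ell, m \in \{1, 2\}$. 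By careful choice of $\pi$ (or by replacing $\tau_1$ with a longer product of the form $\rho \pi_1 \sigma \rho \pi_2 \sigma \cdots$) one can also arrange $\dom{\tau_1} = \im{\tau_1} = n$.

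The main step bootstraps $\tau_1$ up to $J$. I would iteratively construct $\tau_2, \tau_3, \dots \in G$ as products $\tau_k \pi_k \tau_k$, with permutations $\pi_k$ chosen so that row $i_0$ of $\tau_{k+1}$ is strictly larger than row $i_0$ of $\tau_k$. The key identity is that row $i_0$ of $\tau_k \pi \tau_k$ equals $\bigcup_{l \in (i_0)\tau_k} (\text{row } l\pi \text{ of } \tau_k)$; choosing $\pi$ so that one element $l_1 \in (i_0)\tau_k$ maps to $i_0$ itself (preserving the current row $B := (i_0)\tau_k$ inside the union) while a distinct element $l_2 \in (i_0)\tau_k$ maps to some index $j^*$ whose row in $\tau_k$ contains an element of $n \setminus B$ enlarges row $i_0$. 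Such $j^*$ exists because $\im{\tau_k} = n$ forces some row of $\tau_k$ other than $i_0$ to carry an entry in the columns outside $B$. Iterating makes row $i_0$ equal all of $n$. Then the same manoeuvre propagates fullness: writing $F(\tau) := \{i : (i)\tau = n\}$, for any $i \notin F(\tau_k)$ one can choose $\pi$ with some $l \in (i)\tau_k$ mapping into $F(\tau_k)$, and this makes row $i$ of $\tau_k \pi \tau_k$ full while preserving every previously full row. Hence $|F|$ grows strictly, terminating at $F = n$ with $\tau = J$.

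The main obstacle is making this bootstrap rigorous: verifying at each step that the required permutation exists, that full rows remain full, and that the domain and image remain all of $n$ throughout (Boolean matrix multiplication is not monotone, so bare products can destroy entries). This is where the two-sided hypothesis is used: $\dom{\rho} = n$ is invoked when left-multiplying by $\rho$ to prevent the domain from collapsing, and $\im{\sigma} = n$ when right-multiplying by $\sigma$ to prevent the image from collapsing, so the construction can be interleaved with such ``refreshing'' multiplications as needed. Finiteness of $n$ then guarantees termination in at most $O(n)$ iterations, yielding $J \in G$.
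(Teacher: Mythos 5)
This lemma is quoted from \cite{maximal_east_mitchell_peresse} and not proved in the present paper, so I can only assess your argument on its own terms. Your pigeonhole step is correct, and your bootstrap phase (squaring $\tau\mapsto\tau\pi\tau$ to grow a row and then to propagate full rows) is sound \emph{provided} the seed $\tau_1$ already satisfies $\dom{\tau_1}=\im{\tau_1}=n$ and $\tau_1\notin\Sym{n}$: one checks that $\dom{\tau\pi\tau}=\dom{\tau}$ and $\im{\tau\pi\tau}=(\im{\tau}\pi\cap\dom{\tau})\tau$, so full domain and full image are preserved under squaring, and your choices of $\pi$ exist. The genuine gap is the construction of that seed, which you defer to ``careful choice of $\pi$'' and to ``refreshing'' multiplications. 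A single product $\rho\pi\sigma$ need not have full domain and full image: if $\rho$ is a non-injective function (all rows singletons, so $\lvert\im{\rho}\rvert<n$) and all columns of $\sigma$ are singletons, then $\dom{\rho\pi\sigma}=n$ forces $\im{\rho}\pi\subseteq\dom{\sigma}$ while $\im{\rho\pi\sigma}=n$ forces $\dom{\sigma}\subseteq\im{\rho}\pi$, which is impossible unless $\lvert\im{\rho}\rvert=\lvert\dom{\sigma}\rvert$. Moreover the proposed repair fails: $\dom{\rho\pi\tau}=\set{i}{(i)\rho\pi\cap\dom{\tau}\neq\emptyset}$, so left-multiplying by $\rho\pi$ does \emph{not} restore a full domain merely because $\dom{\rho}=n$ --- in the example above it requires $\lvert\im{\rho}\rvert\leq\lvert\dom{\tau}\rvert$, which need not hold. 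Since your entire bootstrap collapses without the seed (once $\dom{\tau}\subsetneq n$, squaring can only shrink the domain further), this is a load-bearing gap, not a routine verification.

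The gap is fillable, and the fix makes the bootstrap unnecessary. First fatten $\rho$ and $\sigma$ separately. Your pigeonhole gives a column of $\rho$ of size $\geq 2$; the product $\rho\pi\rho$ always has full domain, and its column $j$ equals the union of the columns of $\rho$ indexed by $((j)\rho^{-1})\pi^{-1}$. Since the union of all columns of $\rho$ is $\dom{\rho}=n$, as long as the largest column $C$ has $2\leq\lvert C\rvert<n$ one can choose $\pi$ so that the new largest column strictly grows (include $C$ and a column meeting $n\setminus C$ in the union). Iterating yields $\rho^*\in\genset{\Sym{n},\rho}$ with $\dom{\rho^*}=n$ and a full column $j^*$, i.e.\ $(i,j^*)\in\rho^*$ for all $i$. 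Dually, $\sigma\pi\sigma$ always has full image and one obtains $\sigma^*\in\genset{\Sym{n},\sigma}$ with $\im{\sigma^*}=n$ and a full row $i^*$. Then for any $\pi$ with $j^*\pi=i^*$ one has $(i,j^*)\in\rho^*$, $(i^*,j)\in\sigma^*$ for all $i,j$, whence $\rho^*\pi\sigma^*=n\times n$ directly. You should either carry out this two-sided fattening or give an explicit construction of your seed $\tau_1$; as written the proof is incomplete at its central step.
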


\begin{lemma}[{\cite[Lemma 9.4]{maximal_east_mitchell_peresse}}] \label{lem:missing_all_part}
    Let $\partition$ be a finite partition of an infinite set $X$ and $f \in I_X$ be total and non-surjective (i.e. $c(f) = 0 < d(f)$).
    Then there exists a total $f^*$ in the semigroup generated by $\Stab{\partition} \cup \{f\}$ such that $\card{\Sigma_i \setminus \im{f^*}} \geq d(f)$ for all $i \in n$. If $d(f)$ is infinite, then $\card{\Sigma_i \setminus \im{f^*}} = d(f)$ for all $i \in n$.
\end{lemma}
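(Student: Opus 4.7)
The plan is to build $f^*$ as an iterated product of the form $f a_0 f a_1 f \cdots a_{n-1} f$, where each $a_j\in\Stab{\partition}$ is chosen carefully. Since $f$ is total and every $a_j$ is a permutation, the composite is total as required, and Lemma \ref{lem:defect_properties}\eqref{lem:defect_properties/defect_total} guarantees that $d(f^*)=n\cdot d(f)$. The starting point is the following identity, valid whenever $g\in I_X$ is total: using injectivity and totality of $f$,
\[
X\setminus\im{g a f} \;=\; M \;\sqcup\; \bigl((X\setminus\im g)\,a\bigr)f,\qquad M:=X\setminus\im f,
\]
and the two sets on the right are disjoint because $M\subseteq X\setminus\im f$ while the second piece sits inside $\im f$. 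Iterating this identity gives, by induction on $k$,
\[
X\setminus\im{f a_0 f a_1 \cdots a_{k-1} f}
\;=\; M \;\sqcup\; \bigsqcup_{j=0}^{k-1} \bigl(M a_j f a_{j+1} f \cdots a_{k-1}\bigr)f,
\]
a disjoint union of $k+1$ chunks, each of cardinality $d(f)$. Taking $k=n-1$ will give exactly $n$ disjoint chunks of size $d(f)$ contributing to $X\setminus\im{f^*}$.

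The core of the proof is then to choose $a_0,\dots,a_{n-2}\in\Stab{\partition}$ so that these $n$ chunks together cover at least $d(f)$ elements of each block $\Sigma_i$. My plan is to select the $a_j$'s from right to left so as to route each chunk into a prescribed block: since the last operation applied to the chunk indexed by $j$ is $f$, I want the set $B_j := M a_j f a_{j+1} f \cdots a_{n-2}$ to lie inside $f^{-1}(\Sigma_{\tau(j)})$ for a suitable target $\tau(j)\in n$. The freedom to do this comes from two facts: $\Stab{\partition}$ acts on $X$ as the wreath product $\Sym{X}\wr \Sym{n}$, so it can move any subset of size $\le\vert X\vert$ into (or onto) any specified block; and for any $i\in n$, the preimage $f^{-1}(\Sigma_i)$ has cardinality $\vert \Sigma_i\cap\im f\vert = \vert X\vert - \vert\Sigma_i\cap M\vert$, which is $\vert X\vert$ whenever $d(f)<\vert X\vert$, providing ample room for a chunk of size $d(f)$. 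After placing $B_{n-2}$ via $a_{n-2}$ (with the $M$-chunk and the $B_{n-2}f$-chunk accounting for $\Sigma_{i_0}$ and $\Sigma_{i_{n-1}}$), one picks $a_{n-3}$ to place $B_{n-3}$ inside $f^{-1}(\Sigma_{i_{n-2}})$ without disturbing the previous placement, and so on. A bijection $\tau\colon\{0,\dots,n-1\}\to n$ is chosen so that every block receives a chunk.

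The main obstacle is precisely this simultaneous routing: when we fix $a_j$ to send $B_j$ into $f^{-1}(\Sigma_{\tau(j)})$, this choice also alters $B_{j-1}$, since $B_{j-1}$ depends on all later $a$'s. To handle this, the inductive choice must be made from the innermost composition outwards, and at each step one must verify that the target preimage $f^{-1}(\Sigma_i)$ still has cardinality $\vert X\vert$ relative to whichever subset is being injected (which uses $d(f)\le\vert X\vert$ and finiteness of $n$). The edge case $d(f)=\vert X\vert$ is easier and can be treated separately: here any total $f^*\in\langle\Stab{\partition},f\rangle$ whose image omits a moiety of each block suffices, and one may build such an $f^*$ from a single $fa$ with $a\in\Stab{\partition}$ spreading $M$ across all blocks in pieces of size $\vert X\vert$.

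Finally, the ``infinite $d(f)$'' clause is an automatic consequence of the construction: $\vert\Sigma_i\setminus\im{f^*}\vert$ is bounded above by $\vert X\setminus\im{f^*}\vert = n\cdot d(f) = d(f)$ since $d(f)$ is infinite and $n$ is finite, while the lower bound $\geq d(f)$ was just established, so equality holds. Together with Lemma \ref{lem:compose_relations} and Lemma \ref{lem:nxn} (which are invoked in the broader proof of Theorem \ref{thm:maximal_astab}), this placement lemma provides the needed ingredient for condition \eqref{lem:total->all/in-out-condition} of Theorem \ref{lem:total->all}.
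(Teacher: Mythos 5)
First, a caveat: the paper offers no proof of this statement to compare against --- it is quoted verbatim from \cite[Lemma 9.4]{maximal_east_mitchell_peresse} --- so your argument can only be judged on its own terms. Your decomposition identity is correct: for total $g$ and $a\in\Stab{\partition}$ one does have $X\setminus\im{gaf}=M\sqcup\bigl((X\setminus\im{g})a\bigr)f$ with $M=X\setminus\im{f}$, and iterating it writes $X\setminus\im{f^*}$ as a disjoint union of $n$ pieces of size $d(f)$. The gap is in the routing step, which is where all the content of the lemma lives, and the two ``facts'' you invoke to justify it are not facts. First, an element of $\Stab{\partition}$ maps distinct blocks to distinct blocks, so it sends a set meeting $k$ blocks to a set meeting exactly $k$ blocks; in particular it cannot ``move any subset of size $\le|X|$ into any specified block'', and it cannot move $B_j$ into $f^{-1}(\Sigma_{\tau(j)})$ unless $f^{-1}(\Sigma_{\tau(j)})$ happens to meet the appropriate blocks in sufficiently large sets --- and $f^{-1}(\Sigma_t)\cap\Sigma_s$ may perfectly well be empty for some pairs $(s,t)$. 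Second, the chunk $M$ is completely determined by $f$ and is not routable at all; when $d(f)$ is finite it may be spread over several blocks with strictly fewer than $d(f)$ points in each, so it cannot ``account for'' a single block $\Sigma_{i_0}$ as your scheme requires.

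These problems are fatal for the plan as written because the choice $k=n-1$ leaves no slack: $d(f^*)=n\cdot d(f)$ exactly, while the conclusion forces $|\Sigma_i\setminus\im{f^*}|=d(f)$ for every $i$, so every single point of the defect must land in precisely the right block, including compensating point-by-point for however $M$ happens to be distributed. A chunk-to-block bijection $\tau$ cannot achieve this. The architecture is likely salvageable --- for instance by allowing longer products (more copies of $f$, hence slack), by routing at the level of individual points while tracking which intersections $\Sigma_s\cap f^{-1}(\Sigma_t)$ are large, or by restructuring the argument as an induction on the number of blocks already missing at least $d(f)$ points of the image --- but as it stands the key construction is asserted rather than proved, and the assertions offered in its support are false.
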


\begin{proof}[Proof of Theorem \ref{thm:maximal_astab}]

If $Y=\Sigma_0$, then $\Stab{\partition}$ satisfies equation \eqref{lem:total->all/YSym} of Theorem \ref{lem:total->all}. So it suffices to show that $A_{\partition}$ and $\inv{A_{\partition}}$, together with the semigroups described in Theorem \ref{thm:maximal_sym}, satisfy the conditions of Theorem \ref{lem:total->all} where $G=\Stab{\partition}$.

\vspace{\baselineskip}\noindent
{\bf (i)} Note that if $f\in \Sym{X}$, then $\dom{\rho_f}=\im{\rho_f}=n$ and $\rho_f \in \Sym{n} \iff f \in \AStab{\partition}$. It follows that 
\begin{equation}\label{contains_astab}
A_{\partition} \cap \Sym{X}=\inv{A_{\partition}} \cap \Sym{X}=\AStab{\partition}\supseteq \Stab{\partition}.
\end{equation}

\vspace{\baselineskip}\noindent
{\bf (ii)} By \eqref{contains_astab} above, $A_{\partition}$ and $\inv{A_{\partition}}$ are proper subsets of $I_X$. Let $f,g \in A_{\partition}$. If $\dom{\rho_f} \subsetneq n$, then $\dom{\rho_f \rho_g} \subseteq \dom{\rho_f}\subsetneq n$. Similarly, if $\rho_f \in \Sym{n}$ and $\dom{\rho_g} \subsetneq n$, then $\dom{\rho_f\rho_g}=\dom{\rho_g}\inv{\rho_f} \subsetneq n$ since $n$ is finite. In both cases it follows from Lemma \ref{lem:compose_relations} that 
$$\dom{\rho_{fg}} \subseteq \dom{\rho_f \rho_g} \subsetneq n$$
and so $fg \in A_{\partition}$. So assume that $\rho_f, \rho_g \in \Sym{n}$. Then $\rho_{fg}$ is a subset of the permutation $\rho_f\rho_g$. Hence either $\rho_{fg}=\rho_f\rho_g \in \Sym{n}$ or $\dom{\rho_{fg}} \subsetneq n$. Thus $fg \in A_{\partition}$.

\vspace{\baselineskip}\noindent
{\bf (iii)} If $f\in I_X$ satisfies $\dom{f}=\Sigma_0$ and $\im{f}=X$, then $f \in A_{\partition} \setminus \inv{A_{\partition}}$. If $g\in I_X$ is total and maps each $\Sigma_i$ to a moiety of $\Sigma_i$, then $\rho_g=\rho_{\inv{g}}$ is the identity on $n$ and so $g, g^{-1} \in A_{\partition}$. In other words, $g\in A_{\partition} \cap \inv{A_{\partition}}$. However, $g$ is not contained in any $S_{\mu}$ and $g^{-1}$ is not contained in any $\inv{S_{\mu}}$ defined in Theorem \ref{thm:maximal_sym}.

\vspace{\baselineskip}\noindent
{\bf (iv)} Note that $\rho_{f^{-1}}=\set{(j,i)}{(i,j) \in \rho_f}$ for any $f \in I_X$. That is, 
 $\rho_{f^{-1}}$ is the so-called converse of the binary relation $\rho_f$. In particular, $\rho_f \in \Sym{n} \iff \rho_{f^{-1}} \in \Sym{n}$ and $\dom{\rho_{f^{-1}}}=\im{\rho_f}$. Thus $\inv{A_{\partition}}$ is indeed the inverse of $A_{\partition}$.

\vspace{\baselineskip}\noindent
{\bf (v)} Let $U$ be a subsemigroup of $I_X$ such that $\Stab{\partition} \subseteq U$ but $U$ is not contained in $A_{\partition}$, $\inv{A_{\partition}}$, $S_1$, or $S_{\mu}$ for any infinite $\mu \leq |X|$.

The first step is to show, by transfinite induction, that for every cardinal $\mu\leq |X|$
\begin{equation}\label{finite_partition_induction}
\text{there exists $f_{\mu}\in U$ with $c(f_{\mu})=0$ and $|\Sigma_i \setminus \im{f_{\mu}}|\geq \mu$ for all $i\in n$.}
\end{equation}
Let $f\in U \setminus S_1$. Then $f$ is total and $d(f)>0$. If $\mu$ is finite, then $c(f^{\mu})=0$ and $d(f^{\mu})\geq \mu$ by Lemma  \ref{lem:defect_properties} parts \eqref{lem:defect_properties/collapse_composition} and \eqref{lem:defect_properties/defect_total}. Hence, by Lemma \ref{lem:missing_all_part}, condition \eqref{finite_partition_induction} holds for all finite $\mu$. So let $\nu\leq |X|$ be infinite and assume that \eqref{finite_partition_induction} holds for all $\mu<\nu$. Let $g_{\nu}\in U \setminus S_{\nu}$, let $\mu=c(g_{\nu})<\nu$, and let $f_{\mu}$ satisfy \eqref{finite_partition_induction}. Since $c(g_{\nu})=\mu$ and $|\Sigma_i \setminus \im{f_{\mu}}|\geq \mu$ for all $i\in n$, there exists $a\in \Stab{\partition}$ which takes $\im{f_{\mu}}$ into the domain  of $g_{\nu}$. Then $f_{\mu}ag_{\nu}$ is total and $d(f_{\mu}ag_{\nu})\geq d(g_{\nu})\geq \nu$. Applying Lemma \ref{lem:missing_all_part} to $f_{\mu}ag_{\nu}$, we conclude that \eqref{finite_partition_induction} holds for $\nu$. Hence  \eqref{finite_partition_induction} holds for all $\mu\leq |X|$. In other words, there exists a total $f_{|X|}\in U$ such that  $|\Sigma_i \setminus \im{f_{|X|}}|= |X|$ for all $i\in n$. 

Let $g \in U \setminus A_{\partition}$ and $h \in U \setminus \inv{A_{\partition}}$. Then $\dom{\rho_g}=\im{\rho_h}=n$ but $\rho_g$ and $\rho_n$ are not permutations. By Lemma \ref{lem:nxn}, the semigroup generated by $\{\rho_g, \rho_h\}$ contains $n\times n$. Hence, by Lemma \ref{lem:compose_relations}, there exists $t$ in the semigroup generated by $\{g, h\}\cup \Stab{\partition}\subseteq U$ such that $\rho_t=n\times n$. In particular, $|\Sigma_i t \cap \Sigma_0|=|X|$ for every $i \in n$. Let $a\in \Stab{\partition}$ map $\im{f_{|X|}}$ into $\Sigma_0 t^{-1} \cap \Sigma_i$ for every $i\in n$. Then $f_{|X|}at$ is the required total element of $U$ with image in $Y=\Sigma_0$.

\vspace{\baselineskip} We have shown that the five conditions are satisfied and so $A_{\partition}$ and $\inv{A_{\partition}}$ are the only maximal  subsemigroups of $I_X$ containing $\Stab{\partition}$ but not $\Sym{X}$. Moreover, in the proof of part (iii) above, we found $g\in A_{\partition} \cap \inv{A_{\partition}}$ such that $g \not \in S_{\mu}$  for $\mu=1$ or any infinite $\mu\leq |X|$. In particular $A_{\partition} \cap \inv{A_{\partition}}$ is not contained in any $S_{\mu}\cap S_{\inv{\mu}}$. Thus, $A_{\partition} \cap \inv{A_{\partition}}$ is the unique maximal subsemigroup of $I_X$ containing $\Stab{\partition}$ but not $\Sym{X}$.
\end{proof}

    \hfill
	\bibliographystyle{plain}
	\bibliography{Kilder}

\end{document}